%-----------------------------------------------------------------------
% Beginning of qam-l-template.tex
%-----------------------------------------------------------------------
%
%     AMS-LaTeX v.2 template for the Quarterly of Applied Mathematics
%
%     Templates for various common text, math and figure elements are
%     given following the \end{document} line.
%
%%%%%%%%%%%%%%%%%%%%%%%%%%%%%%%%%%%%%%%%%%%%%%%%%%%%%%%%%%%%%%%%%%%%%%%%

%     Remove any commented or uncommented macros you do not use.

\documentclass{qam-l}

%     If you need symbols beyond the basic set, uncomment this command.
\usepackage{amssymb}
\usepackage{accents}
%     If your article includes graphics, uncomment this command.
%\usepackage{graphicx}

%     If the article includes commutative diagrams, ...
%\usepackage[cmtip,all]{xy}

%     Update the information and uncomment if Brown University is not
%     the copyright holder.
%\copyrightinfo{2009}{Brown University}

\newtheorem{theorem}{Theorem}[section]

\newtheorem{proposition}[theorem]{Proposition}
\newtheorem{corollary}[theorem]{Corollary}

\theoremstyle{definition}
\newtheorem{definition}[theorem]{Definition}

\theoremstyle{remark}
\newtheorem{remark}[theorem]{Remark}

\numberwithin{equation}{section}

\def\Bbb{\mathbb}

\begin{document}
\title[Minimal Entropy Conditions for General Convex Fluxes]{Minimal Entropy Conditions for  Scalar Conservation Laws with General Convex Fluxes}
%\title{Minimal Entropy Conditions for Scalar Conservation Laws with General Convex Fluxes}

%    Only \author and \address are required; other information is
%    optional.  Remove any unused author tags.
%    Enter the address for every author, even if some are the same.

%    author one information
% \author[short version for running head]{name for top of paper}
\author{Gaowei Cao}
\address{Wuhan Institute of Physics and Mathematics, Innovation Academy for Precision Measurement Science and Technology, Chinese Academy of Sciences, Wuhan 430071, China}
\curraddr{(Of the first author) Oxford Centre for Nonlinear Partial Differential Equations, Mathematical Institute, University of Oxford, Oxford, OX2 6GG, UK}
\email{\tt gwcao@apm.ac.cn; caog@maths.ox.ac.uk}
\thanks{Gaowei Cao was supported in part by the National Natural Science Foundation of China No.11701551 and No.11971024, and the China Scholarship Council No.202004910200}

%    author two information
\author{Gui-Qiang G. Chen$^{\dag}$}
\address{Oxford Centre for Nonlinear Partial Differential Equations, Mathematical Institute, University of Oxford, Oxford, OX2 6GG, UK}
%\curraddr{}
\email{\tt chengq@maths.ox.ac.uk}
\thanks{Gui-Qiang G. Chen was supported in part by the UK Engineering and Physical Sciences Research Council Awards EP/L015811/1, EP/V008854, and EP/V051121/1, and the Royal Society-Wolfson Research Merit Award WM090014 (UK)}

%    \subjclass is required.
\subjclass[2020]{Primary 35L65, 35L67; Secondary 35F25, 35A02, 35D40, 35F21}

\date{December 22, 2022 and, in revised form, March 15, 2023.}

\dedicatory{To Costas Dafermos on the Occasion of his 80th Birthday with Admiration and Affection}

\keywords{Entropy solutions, minimal entropy conditions, Radon measure,
convex fluxes, strict convexity, locally Lipschitz, H\"{o}lder continuity, uniqueness,
weak solutions, viscosity solutions, bilinear form, commutator estimates.\\
$^\dag$Corresponding author.}

%    Abstract is required.
\begin{abstract}
We are concerned with the minimal entropy conditions for one-dimensional scalar conservation laws with general convex flux functions.
For such scalar conservation laws, we prove that a single entropy-entropy flux pair $(\eta(u),q(u))$ with $\eta(u)$ of strict convexity is sufficient to single out
an
%the unique
entropy solution from a broad class of weak solutions in $L^\infty_{\rm loc}$ that satisfy the inequality:
$\eta(u)_t+q(u)_x\leq \mu$ in the distributional sense for some non-negative Radon measure $\mu$.
Furthermore, we extend this result to the class of weak solutions in $L^p_{\rm loc}$,
based on the asymptotic behavior of the flux function $f(u)$ and the entropy function $\eta(u)$ at infinity.
The proofs are based on the equivalence between the entropy solutions of one-dimensional scalar conservation laws and the viscosity solutions of the corresponding Hamilton-Jacobi equations, as well as the bilinear form and commutator estimates as employed similarly in the theory of compensated compactness.
\end{abstract}

\maketitle

%    Text of article.

\section{Introduction}
We are concerned with the minimal entropy conditions for one-dimensional scalar conservation laws with general convex flux functions:
\begin{eqnarray}
&&u_t+f(u)_x=0\qquad\,\, \mbox{for $(t,x)\in {\Bbb R}^+\times{\Bbb R}:=(0,\infty)\times (-\infty,\infty)$},\label{equationu}\\
&&u|_{t=0}=u_0(x), \label{ID}
\end{eqnarray}
for the initial data function $u_0\in L^p_{\rm loc}$ for $p\geq 1$.

It is well known that, due to the nonlinearity of the flux function $f(u)$, no matter how smooth the initial data function $u_0(x)$ is,
the solution may form shock waves generically in a finite time.
Thus, the solution should be understood in a weak sense, which means that the solution as a function with suitable integrability
solves equation \eqref{equationu} in the distributional sense.
In general, the weak solutions are not unique, so we need entropy conditions to characterize the unique entropy solution among the weak solutions.

As shown by Oleinik \cite{OOA}, for equation \eqref{equationu} with uniformly convex flux function $f(u)\in C^2({\Bbb R})$, the entropy condition, so called Condition (E),
is sufficient to single out the unique weak solution (physically relevant) among all possible weak solutions.
Condition (E) is the Oleinik's one-sided inequality for the entropy solution $u(t,x)$:
\begin{equation}\label{conditione}
\frac{u(t,x_2)-u(t,x_1)}{x_2-x_1}\leq\frac{1}{c t}
\qquad\mbox{for\ any\ $x_2>x_1$\ and\ $t>0$},
\end{equation}
where $c:=\inf \{f''(u):u\in {\Bbb R}\}>0$.
Condition $\eqref{conditione}$ implies the regularizing effect that the initial data function in $L^\infty$
are regularized to $BV_{\rm loc}$ instantaneously for the solutions.
This condition also yields many fine properties such as the regularity, the decay rates, and the convergence of approximation schemes, among others, for the solutions; see Dafermos \cite{[DCM]} and Lax \cite{[Lax]}.

For one-dimensional scalar conservation laws with general flux functions, not necessarily convex (even for the multi-dimensional case), a general method for enforcing the uniqueness of solutions in $L^\infty$ was established by Kruzkov \cite{[KSN1]}, by following the earlier results for solutions in $BV_{\rm loc}$ by Conway-Smoller \cite{[CS]} and Vol$'$pert \cite{VA}.
Besides the existence of weak solutions in $L^\infty$, Kruzkov also proved the uniqueness by the so-called Kruzkov's entropy condition:
An entropy solution is a weak solution $u\in L^\infty$ satisfying that
\begin{equation}\label{eta0}
\eta(u)_t+q(u)_x\leq 0\qquad \,\, \mbox{in $\mathcal{D}^\prime$}
\end{equation}
for any $(\eta(u),q(u))\in \{(\eta_k(u),q_k(u))\}_{k\in {\Bbb R}}$, the family of which is defined by
$$
\eta_k(u):=|u-k|, \quad\,\, q_k(u):={\rm sgn}(u-k)\big(f(u)-f(k)\big).
$$
It is equivalent to saying that $\eqref{eta0}$ holds for all entropy-entropy flux pair $(\eta(u),q(u))$ with convex entropy function $\eta(u)$ and
\begin{equation}\label{1.6a}
q'(u)=\eta'(u)f'(u)\qquad\,\, {a.e.}\,\, u \in {\Bbb R}.
\end{equation}

When the flux function is uniformly convex, the two entropy conditions $\eqref{conditione}$ and $\eqref{eta0}$ are equivalent,
which implies that the entropy solutions characterized by Oleinik's condition (E) coincide with the Kruzkov entropy solutions.
In 1989, Kruzkov \cite{[KSN2]} posed an important open question on whether only one single convex entropy $\eta(u)$ satisfying $\eqref{eta0}$
can enforce the uniqueness of the solution, which is called the {\it Minimal~ Entropy~ Condition} in De Lellis-Otto-Westdickenberg \cite{[LOW]}.
In view of the lack of convex entropy functions for hyperbolic systems of conservation laws, the question of {\it Minimal~ Entropy~ Conditions} becomes important for the mathematical theory of hyperbolic conservation laws.

Panov \cite{PEY} first gave a positive answer to this question by proving that the weak solution $u\in L^\infty$ satisfying $\eqref{equationu}$ and $\eqref{eta0}$, with a flux function $f(u)$ and a single entropy function $\eta(u)$ that are both uniformly convex, is the unique entropy solution in Oleinik's sense, or equivalently Kruzkov's sense.
This result was also proved by De Lellis-Otto-Westdickenberg \cite{[LOW]} and Krupa-Vasseur \cite{[KV]}.
In De Lellis-Otto-Westdickenberg \cite{[LOW]}, they further proved that, for the Burgers flux: $f(u)=\frac{1}{2}u^2$ and the special convex entropy $\eta(u)=\frac{1}{2}u^2$,
the weak solution $u\in L^4_{\rm loc}(\Omega)$ satisfying the {\it Minimal~ Entropy~ Condition}:
\begin{equation}\label{etab}
\big(\frac{1}{2} u^2\big)_t+\big(\frac{1}{3} u^3\big)_x\leq \mu\qquad \mbox{in $\mathcal{D}^\prime(\Omega)$},
\end{equation}
for some non-negative Radon measure $\mu$ with $\lim_{r\downarrow 0}\frac{\mu(B_r(t,x))}{r}=0$ for each $(t,x)\in \Omega$, must be the entropy solution of the Burgers equation,
where $B_r(t,x)$ denotes the disk with center $(t,x)$ and radius $r>0$.
For the Kruzkov-type estimates via the entropy inequalities bounded by Radon measures; see Bouchut-Perthame \cite{[BP]}.

One of the motivations of this paper is that the uniform convexity is a strong restriction on both the flux function $f(u)$ and the entropy function $\eta(u)$, since most of convex functions $g(u)$ such as those with $g''(u)=0$ possessing only isolated roots and/or with an asymptotic line, saying $g(u)=|u|^\alpha $ with $\alpha>2$ and $g(u)=e^{ku} $ with $k\neq 0$, are not uniformly convex.
Another motivation is based on the conjecture by De Lellis-Otto-Westdickenberg \cite{[LOW]} that the result for the Burgers equation with the minimal entropy condition \eqref{etab}
could be generalized to allow for different strictly convex flux functions and entropy functions.

The purpose of this paper is to give a positive answer to the question of {\it Minimal Entropy Conditions} by generalizing the previous results in \cite{PEY,[LOW]} in three aspects:
\begin{enumerate}
\item[(i)] The flux function $f(u)$ is required to be only convex, which allows itself to be linear degeneracy in general.

\item[(ii)] The minimal entropy condition similar to $\eqref{etab}$ is generalized to be as follows:
There exists a strictly convex entropy function $\eta(u)$ such that
\begin{equation}\label{etamu}
\eta(u)_t+q(u)_x\leq \mu \qquad \mbox{in $\mathcal{D}^\prime(\Omega)$}
\end{equation}
for some non-negative Radon measure $\mu$ satisfying that
\begin{equation}\label{mu0}
\mathop{\underline{\lim}}\limits_{r \downarrow 0}\frac{1}{r}\mu(B_r(t,x))=0\qquad\mbox{for every $(t,x)\in \Omega$}.
\end{equation}

\item[(iii)]  The weak solution $u$ is required to be only in $L^p_{\rm loc}$ for $p\ge p_0$, instead of $u\in L^\infty$ or $u\in L^4_{\rm loc}$,
with $p_0\ge 1$ determined by the asymptotic behavior of the flux function $f(u)$ and the entropy function $\eta(u)$ as $u\rightarrow \pm \infty$.
\end{enumerate}

As well known in convex analysis ({\it cf.} \cite{[HL]}), a function $g(u)$ is called convex if, for any $u_1,u_2\in {\Bbb R}$ with $u_1\neq u_2$,
$$
g(\lambda u_1+(1-\lambda)u_2)\leq\lambda g(u_1)+(1-\lambda) g(u_2)
\qquad \mbox{for any $\lambda \in [0,1]$},
$$
or equivalent to saying that $g(u)$ is locally Lipschitz continuous and
\begin{equation}\label{convexgf}
g'(u) \,\, \,\,~~{\rm is ~nondecreasing ~on}~ {\Bbb R}.
\end{equation}
Furthermore, a function $g(u)$ is called strictly convex if, for any $u_1,u_2\in {\Bbb R}$ with $u_1\neq u_2$,
$$
g(\lambda u_1+(1-\lambda)u_2)<\lambda g(u_1)+(1-\lambda) g(u_2)
\qquad \mbox{for any $\lambda \in (0,1)$},
$$
or equivalent to saying that $g(u)$ is locally Lipschitz continuous and
\begin{equation}\label{convexg}
g'(u) \,\, \,\,~~{\rm is ~strictly ~increasing ~on}~ {\Bbb R}.
\end{equation}
Notice that the strict convexity condition $\eqref{convexg}$ for the entropy function $\eta(u)$ is equivalent to saying that $\eta(u)$ is convex and satisfies that there is no interval in $\Bbb R$ in which $\eta(u)$ is affine, which is a generalized notion of classical genuine nonlinearity.
In general, such an entropy function $\eta(u)$ allows itself to be not locally uniformly convex, even to possess a measure-zero degenerate set $\{u\,:\, \eta''(u)=0\}$ for the case that $\eta(u)\in C^2({\Bbb R})$.
Furthermore, condition $\eqref{convexg}$ on the entropy function $\eta(u)$ is optimal; otherwise, if $\eta(u)$ is linearly degenerate on some interval $[a,b]$,
then, for $u_0(x)$ ranged in $[a,b]$, inequality $\eqref{etamu}$ does not provide further restrictions on the weak solutions, which can not enforce the uniqueness.
Moreover, the convexity condition $\eqref{convexgf}$ on the flux function $f(u)$ is necessary in general since, for the case of the flux function without convexity,
a single strictly convex entropy function $\eta(u)$ for $\eqref{etamu}$ is not sufficient to enforce the uniqueness; see Remark {\rm $\ref{nonconvex}$}.
In addition, condition $\eqref{mu0}$ on the non-negative Radon measure $\mu$ is optimal; see Remark {\rm $\ref{muoptimal}$} below.

\smallskip
As in Panov \cite{PEY} and De Lellis-Otto-Westdickenberg \cite{[LOW]}, our proofs are also based on the relation between the one-dimensional scalar conservation laws $\eqref{equationu}$ and the corresponding Hamilton-Jacobi equations:
\begin{equation}\label{equationh}
w_t+f(w_x)=0  \qquad\,\,\, \mbox{for $(t,x)\in (0,\infty)\times {\Bbb R}$}.
\end{equation}
Formally, \eqref{equationu} and \eqref{equationh} would be equivalent via the relation: $w_x=u$.
It follows from the existence and uniqueness theory for viscosity solutions of $\eqref{equationh}$, introduced first by Crandall-Lions in \cite{[CL]} (also see Lions \cite{[Lions]}), that the continuous function  $w$ is a viscosity solution of $\eqref{equationh}$ if and only if $u=w_x$ is an entropy solution of $\eqref{equationu}$.
In particular, it can be shown that, if $w$ is a viscosity solution of $\eqref{equationh}$, which can be obtained by the vanishing viscosity method,
then $u=w_x$ is an entropy solution of $\eqref{equationu}$.
In this paper, we prove that the weak solution $u$ satisfying the minimal entropy condition $\eqref{etamu}$ for a single strictly convex entropy function $\eta(u)$
implies that $w$, with $w_x=u$, is actually a viscosity solution of $\eqref{equationh}$,
which yields that $u$ must be an entropy solution satisfying
the entropy inequality $\eqref{eta0}$.
Our analysis is highly motivated by the arguments in De Lellis-Otto-Westdickenberg \cite{[LOW]} (also see Ambrosio-Lecumberry-Rivi\`{e}re \cite{[ALR]});
in particular, the proofs are based on the bilinear form and commutator estimates, for which similar arguments have been employed
in the theory of compensated compactness ({\it cf.} \cite{[CLu],Tartar}).

This paper is organized as follows:
In \S 2, we first introduce some basic concepts and then present the main theorems of this paper.
In \S 3, we prove several lemmas on the averages of functions and properties of linear degeneracy of convex functions for the subsequent development.
In \S 4, we complete the proof of Theorem \ref{Theinfty} for the case that $u\in L^\infty_{\rm loc}$.
In \S 5, we prove Theorem \ref{Thep} for the case that $u\in L^p_{\rm loc}$, based on the asymptotic behavior of the flux function $f(u)$ and the entropy function $\eta(u)$ as $u\to \pm \infty$.

\section{Basic Notions and Main Theorems}
In this section, we first present the notion of entropy solutions of scalar conservation laws \eqref{equationu} and the notion of viscosity solutions of Hamilton-Jacobi equations \eqref{equationh}, respectively.
Then we state the main theorems of this paper and make several related remarks.

\begin{definition}[Entropy Solutions]\label{entropys}
Let $f(u)\in {\rm Lip}_{\rm loc}({\Bbb R})$ and $\Omega \subset{\Bbb R}^+\times{\Bbb R}$.
A function $u\in L^1_{\rm loc}(\Omega)$ is called an entropy solution of the scalar conservation law $\eqref{equationu}$ if, for any convex entropy-entropy flux pair $(\eta(u),q(u))\in {\rm Lip}_{\rm loc}({\Bbb R})$ satisfying $q'(u)=\eta'(u)f'(u)$ almost everywhere, $u$ satisfies $\eqref{equationu}$ in $\mathcal{D}^\prime(\Omega)$ and
\begin{equation}\label{ueta}
\eta(u)_t+q(u)_x\leq 0\qquad \mbox{ in $\mathcal{D}^\prime(\Omega)$}.
\end{equation}
\end{definition}

\smallskip
\begin{definition}[Viscosity Solutions]\label{viscositys}
A function $w\in C(\Omega)$ with $\Omega \subset{\Bbb R}^+\times{\Bbb R}$ is called a viscosity solution of the Hamilton-Jacobi equation $\eqref{equationh}$ if $w$ satisfies that
\begin{enumerate}
\item [\rm (i)] For any $C^1$--function $\varphi$ such that $w-\varphi$ has a local maximum at some point $(t,x)$,
\begin{equation}\label{xi-}
\varphi_t(t,x)+f(\varphi_x(t,x))\leq 0{\rm ;}
\end{equation}

\item [\rm (ii)] For any $C^1$--function $\varphi$ such that $w-\varphi$ has a local minimum at some point $(t,x)$,
\begin{equation}\label{xi+}
\varphi_t(t,x)+f(\varphi_x(t,x))\geq 0.
\end{equation}
\end{enumerate}
A function $w$ is called a viscosity subsolution {\rm (}{\it resp}., supersolution{\rm )}
if {\rm (i)} {\rm (}{\it resp}., {\rm (ii)}{\rm )} holds.
\end{definition}

Our first main theorem is for the case: $u\in L^\infty_{\rm loc}$.

\begin{theorem}[$u\in L^\infty_{\rm loc}$]\label{Theinfty}
Let $f(u)\in {\rm Lip}_{\rm loc}({\Bbb R})$ be a convex flux function of $\eqref{equationu}$.
Assume that $u\in L^\infty_{\rm loc}(\Omega)$ for an open set $\Omega \subset {\Bbb R}^+\times{\Bbb R}$ satisfies
$\eqref{equationu}$ in $\mathcal{D}^\prime(\Omega)$ and
\begin{equation}\label{etamu2}
\eta(u)_t+q(u)_x\leq \mu \qquad \mbox{in $\mathcal{D}^\prime(\Omega)$}
\end{equation}
for some strictly convex entropy function $\eta(u)\in {\rm Lip}_{\rm loc}({\Bbb R})$
as in $\eqref{convexg}$ with some non-negative Radon measure $\mu$ satisfying $\eqref{mu0}$.
Then the locally Lipschitz function $w=w(t,x)$ with $w_x=u$ and $w_t=-f(u)$ is a viscosity solution of the Hamilton-Jacobi
equation $\eqref{equationh}$,
and $u\in L^\infty_{\rm loc}(\Omega)$ is an entropy solution of the scalar
conservation law $\eqref{equationu}$ in $\Omega$.
\end{theorem}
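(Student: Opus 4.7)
\smallskip
\textbf{Strategy via the Hamilton-Jacobi formulation.} My plan follows the template of Panov \cite{PEY} and De Lellis-Otto-Westdickenberg \cite{[LOW]}: transfer the problem to the potential $w$ and show that $w$ is a viscosity solution of the corresponding Hamilton-Jacobi equation \eqref{equationh}. Since $u_t+f(u)_x=0$ in $\mathcal{D}^\prime(\Omega)$ and $u\in L^\infty_{\rm loc}(\Omega)$, the Poincar\'e lemma produces a locally Lipschitz $w$ on each simply connected subdomain with $w_x=u$, $w_t=-f(u)$ a.e., so that $w_t+f(w_x)=0$ a.e. Once $w$ is shown to be a viscosity solution, the Crandall-Lions correspondence \cite{[CL]} immediately yields the entropy-solution property for $u=w_x$.

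\smallskip
\textbf{Subsolution is automatic.} Setting $w_\epsilon:=w\ast\rho_\epsilon$ for a standard mollifier, Jensen's inequality for the convex $f$ yields
\begin{equation*}
(w_\epsilon)_t+f((w_\epsilon)_x)=f((w_x)_\epsilon)-(f(w_x))_\epsilon\leq 0,
\end{equation*}
so $w_\epsilon$ is a classical subsolution; stability of viscosity subsolutions under locally uniform limits yields \eqref{xi-}. This step uses only the convexity of $f$.

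\smallskip
\textbf{Supersolution via blow-up.} For \eqref{xi+}, fix $\varphi\in C^1$ such that $w-\varphi$ attains a local minimum at $(t_0,x_0)$. Standard reductions (replacing $\varphi$ by its affine tangent plane, a translation in the $u$-variable, and vertical normalization, all of which preserve convexity of $f$ and strict convexity of $\eta$) reduce the goal to: if $w\geq 0$, $w(t_0,x_0)=0$, $f(0)=\eta(0)=0$, and $w$ obeys the shifted identity $w_t+f(w_x)=\delta$ a.e. near $(t_0,x_0)$, then $\delta\leq 0$. Assume for contradiction $\delta>0$ and perform the parabolic blow-up
\begin{equation*}
w_r(t,x):=\frac{1}{r}\,w(t_0+rt,x_0+rx),\qquad u_r(t,x):=u(t_0+rt,x_0+rx),
\end{equation*}
which preserves both the shifted Hamilton-Jacobi identity and the conservation law. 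A direct calculation gives $\mu_r(B_R)=\mu(B_{rR}(t_0,x_0))/r$, so \eqref{mu0} together with a diagonal extraction produces a sequence $r_n\downarrow 0$ along which $\mu_{r_n}(B_R)\to 0$ for every $R>0$. Uniform Lipschitz bounds on $\{w_{r_n}\}$ (with $w_{r_n}(0,0)=0$) and uniform $L^\infty_{\rm loc}$ bounds on $\{u_{r_n}\}$ then extract a limit $W$ that is locally Lipschitz, nonnegative with $W(0,0)=0$, degree-$1$ homogeneous, and with $W_x=U$ the weak-$\ast$ limit of $u_{r_n}$.

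\smallskip
\textbf{The main obstacle and the contradiction.} The critical step is to upgrade $u_{r_n}\to U$ to \emph{strong} convergence in $L^1_{\rm loc}$, so that $f(u_{r_n})\to f(U)$ and $\eta(u_{r_n})\to \eta(U)$; this is where the strict convexity of $\eta$ is essential, and it is the step I expect to be the hardest. Following the bilinear/commutator framework of \cite{[LOW], [ALR]} and of compensated compactness \cite{[CLu], Tartar}, I test the entropy inequality against a mollified surrogate of $u_{r_n}$ to produce a dissipation bounded below by the nonnegative quantity
\begin{equation*}
D(u_{r_n},v):=\eta(u_{r_n})-\eta(v)-\eta'(v)(u_{r_n}-v),\qquad v:=(u_{r_n})_\varepsilon,
\end{equation*}
and bounded above by a mollification commutator plus an error controlled by $\mu_{r_n}$; by strict convexity, $D>0$ except on the linear-degeneracy set $\{\eta''=0\}$. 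Where \cite{[LOW]} exploited uniform convexity to obtain $D\gtrsim |u_{r_n}-v|^2$, the loss of quadratic coercivity under our weaker hypothesis is precisely what the measure-theoretic lemmas of Section 3 on the linear-degeneracy set of a strictly convex function are designed to compensate, and they deliver the required strong $L^1_{\rm loc}$-compactness. Once the strong limit is in hand, $W$ satisfies $W_t+f(W_x)=\delta$ with the trivial entropy inequality ($\mu=0$); combined with $W\geq 0$, $W(0,0)=0$, the degree-$1$ homogeneity, and the $L^\infty$ bound on $U$, a direct analysis of self-similar Lipschitz profiles of the shifted Hamilton-Jacobi equation forces $\delta\leq 0$, contradicting $\delta>0$. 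Hence the supersolution inequality \eqref{xi+} holds, $w$ is a viscosity solution of \eqref{equationh}, and $u=w_x$ is the required entropy solution.
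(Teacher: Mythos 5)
Your subsolution half is exactly the paper's argument (mollify, Jensen, stability of viscosity subsolutions), so no issue there. Your supersolution strategy, however, hinges on a step that fails for merely convex $f$.

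The genuine gap is the claimed upgrade of $u_{r_n}$ to strong $L^1_{\rm loc}$ convergence. This is not just unproven; it is false under the hypotheses of the theorem. Take $f$ affine with slope $c$ on an interval $[a,b]$ (allowed, since $f$ is only convex), let $\psi$ be a bounded measurable function with values in $[a,b]$ that oscillates at all scales near $y_0=x_0-ct_0$, and set $u(t,x)=\psi(x-ct)$. Then $u$ is a weak solution and $\eta(u)_t+q(u)_x=0$ for every entropy pair (on $[a,b]$ one has $q=c\,\eta+{\rm const}$), so \eqref{etamu2} holds with $\mu=0$; yet the blow-ups $u_r(t,x)=\psi(y_0+r(x-ct))$ oscillate ever faster and admit no strongly convergent subsequence. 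The reason your dissipation mechanism cannot rescue this is that the div-curl/commutator structure controls not your Bregman defect $D(u_{r_n},(u_{r_n})_\varepsilon)$ but the bilinear quantity $P(u,v)=(u-v)(q(u)-q(v))-(\eta(u)-\eta(v))(f(u)-f(v))$ of \eqref{calP} and Corollary \ref{Oleinikentropy}, which vanishes identically on the linearity intervals $I^\pm$ of $f$ no matter how strictly convex $\eta$ is; an estimate of the form ``average of $D\lesssim$ commutator $+$ measure error'' would prove strong compactness in the example above and is therefore unavailable. (You also place the degeneracy on $\{\eta''=0\}$, but under \eqref{convexg} the entropy has no linearity intervals; the degeneracy that matters is that of $f$.)

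Two further weak points: the degree-one homogeneity of the blow-up limit $W$ is asserted without any monotonicity formula and is not automatic for blow-ups of a Lipschitz function, and if the final classification of limit profiles is carried out by regarding $U$ as an entropy solution and invoking the Hamilton--Jacobi correspondence, the argument becomes circular, since ``one strict entropy implies all entropies'' is the theorem being proved. For contrast, the paper avoids both issues: it works directly at the test point with the sublevel sets $\Omega_{\varepsilon,\delta}=\{w-\varphi_\varepsilon<\delta\}$, bounds the bilinear form by the measure and the oscillation of $\varphi$ as in \eqref{caletafg0}, and then proves only the flux-level statement \eqref{calff00}, $\langle f(u)-f(\langle u\rangle_{\varepsilon,\delta})\rangle_{\varepsilon,\delta}\to 0$, by splitting the values of $u$ relative to the interval $I^+(\langle u\rangle_{\varepsilon,\delta})$ of Proposition \ref{ProCon} on which $f$ is affine; this weaker information suffices because only $f(\varphi_x)(0,0)$ enters the supersolution inequality. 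If you wish to keep the blow-up framework, you should aim for convergence of $f(u_{r_n})$ (equivalently, localization of the Young measure to the sets $I^\pm$), not of $u_{r_n}$ itself, and replace the homogeneity step by an argument that does not presuppose it.
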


For the case that $u\in L^p_{\rm loc}$ with $p\geq 1$, we need the asymptotic
behavior of the flux function $f(u)$ and the entropy function $\eta(u)$ as $u\to  \pm \infty$.
More precisely, we need the flux function $f(u)$ and the entropy function $\eta(u)$ to satisfy:
\begin{enumerate}
\item[(i)] The flux function $f(u)$ and the entropy function $\eta(u)$ both grow at least linearly as
$u\to \pm\infty$,
$i.e.$,
\begin{equation}\label{growthrate}
\lim_{u \rightarrow \pm\infty}\frac{f(u)}{|u|}\geq M_1>0, \quad
\lim_{u \rightarrow \pm\infty}\frac{\eta(u)}{|u|^{\beta+1}}\geq M_2>0 \qquad {\rm for \ some\ } \beta\geq 0;
\end{equation}

\item[(ii)]  The quadratic form $Q(u):=uq(u)-f(u)\eta(u)$ grows faster than the entropy flux function $q(u):=\int_0^u\eta'(\xi)f'(\xi)\, {\rm d}\xi$, $i.e.$,
\begin{equation}\label{quadratic1}
\lim_{u \rightarrow \pm\infty}\frac{Q(u)}{|q(u)|}=\lim_{u \rightarrow \pm\infty}\frac{uq(u)-f(u)\eta(u)}{|q(u)|}=\infty;
\end{equation}

\item[(iii)]  The entropy function $\eta(u)$ grows not
too slow with respect to
the flux function $f(u)$, $i.e.$, there exist some constants $\gamma\geq 1$ and $C>0$ such that
\begin{equation}\label{quadratic2}
\frac{Q(u)}{\eta(u)}\leq C\Big(\frac{Q(u)}{f(u)}\Big)^\gamma \qquad\,\, \mbox{\rm as $u\rightarrow\pm\infty$}.
\end{equation}
\end{enumerate}

\begin{theorem}[$u\in L^p_{\rm loc}$]\label{Thep}
Let $f(u)\in {\rm Lip}_{\rm loc}({\Bbb R})$ be a convex flux function of $\eqref{equationu}$,
and let $\eta(u)\in {\rm Lip}_{\rm loc}({\Bbb R})$ be a strictly convex entropy function
as in $\eqref{convexg}$ so that
$f(u)$ and $\eta(u)$ satisfy $\eqref{growthrate}$--$\eqref{quadratic2}$.
Assume that  $u\in L^p_{\rm loc}(\Omega)$ such that $Q(u)\in L^1_{\rm loc}(\Omega)$
for an open set $\Omega \subset {\Bbb R}^+\times{\Bbb R}$
satisfies $\eqref{equationu}$ in $\mathcal{D}^\prime(\Omega)$,
and $\eqref{etamu2}$ with $\eqref{mu0}$ in $\mathcal{D}^\prime(\Omega)$.
Then the continuous function $w=w(t,x)$ with $w_x=u$ and $w_t=-f(u)$ is a viscosity solution of
the Hamilton-Jacobi equation $\eqref{equationh}$, which further is  H\"{o}lder continuous in $(t,x)$
for the case that $\beta>0$,
and $u\in L^p_{\rm loc}(\Omega)$ is an
entropy solution of the scalar conservation law
$\eqref{equationu}$ in $\Omega$.
\end{theorem}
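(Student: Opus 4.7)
The plan is to follow the roadmap established for Theorem~\ref{Theinfty}: introduce a potential $w$ with $w_x=u$ and $w_t=-f(u)$, prove that $w$ is a viscosity solution of the Hamilton--Jacobi equation \eqref{equationh}, and conclude via the Crandall--Lions correspondence that $u$ is an entropy solution of \eqref{equationu}. The new feature in the $L^p_{\rm loc}$ setting is that all mollification and commutator arguments must be closed with \emph{quantitative} control of the unboundedness of $u$, and the conditions \eqref{growthrate}--\eqref{quadratic2} together with $Q(u)\in L^1_{\rm loc}(\Omega)$ are calibrated precisely for this purpose.

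The construction of $w$ proceeds from the fact that \eqref{equationu} holds in $\mathcal{D}'(\Omega)$ while $f(u)\in L^1_{\rm loc}(\Omega)$ (the latter being forced by \eqref{growthrate}--\eqref{quadratic2} in combination with $Q(u)\in L^1_{\rm loc}$), so the $1$-form $u\,dx-f(u)\,dt$ is closed and admits a distributional primitive. To upgrade $w$ to a continuous (or H\"older of exponent depending on $\beta$, when $\beta>0$) function, I would use the lower bound $\eta(u)\geq M_2|u|^{\beta+1}$ for large $|u|$ together with local integrability of $\eta(u)$ (obtained by testing \eqref{etamu2} against non-negative cut-offs and using \eqref{mu0}) to conclude $u\in L^{\beta+1}_{\rm loc}$, and then invoke a $1$-dimensional Morrey-type embedding along each slice for the $x$-regularity and the analogous argument on $f(u)$ for the $t$-regularity.

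For the viscosity subsolution property, the convexity of $f$ suffices: at a local maximum of $w-\varphi$, the mollifications $w^\varepsilon:=w*\rho_\varepsilon$ (which converge to $w$ locally uniformly by the continuity just established) attain local maxima at points $(t_\varepsilon,x_\varepsilon)\to(t_0,x_0)$ where $w^\varepsilon_t=\varphi_t$ and $w^\varepsilon_x=\varphi_x$. Since $w^\varepsilon_t=-(f(u))^\varepsilon$ exactly and Jensen's inequality gives $f(u^\varepsilon)\leq(f(u))^\varepsilon$, one finds
\[
\varphi_t(t_\varepsilon,x_\varepsilon)+f\bigl(\varphi_x(t_\varepsilon,x_\varepsilon)\bigr)=f(u^\varepsilon)-(f(u))^\varepsilon\leq 0,
\]
and \eqref{xi-} follows upon letting $\varepsilon\to 0$. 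The minimal entropy condition plays no role in this direction.

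The supersolution direction is the hard step, and the main obstacle of the proof. At a local minimum of $w-\varphi$ the same identity yields $\varphi_t+f(\varphi_x)=f(u^\varepsilon)-(f(u))^\varepsilon$ at an approximate minimum point, and Jensen now goes the wrong way. To show that this defect tends to zero along a good subsequence, I would mollify \eqref{etamu2} to get $(\eta(u))^\varepsilon_t+(q(u))^\varepsilon_x\leq\mu*\rho_\varepsilon$, combine it with the exact equation \eqref{equationu}, and analyse a Tartar-type bilinear form
\[
B_\varepsilon(z):=\int\bigl[\eta(u(z-y))-\eta(u^\varepsilon(z))-\eta'(u^\varepsilon(z))(u(z-y)-u^\varepsilon(z))\bigr]\rho_\varepsilon(y)\,dy\geq 0,
\]
which is bounded from above by expressions involving $\mu*\rho_\varepsilon$ (vanishing thanks to \eqref{mu0}) and integrals controlled by $Q(u)\in L^1_{\rm loc}$. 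The strict convexity \eqref{convexg} of $\eta$ then converts the smallness of $B_\varepsilon$ into strong $L^p_{\rm loc}$ convergence of $u^\varepsilon$ to $u$ along a subsequence, and the growth hypothesis \eqref{quadratic2} upgrades this to $f(u^\varepsilon)\to f(u)$ in $L^1_{\rm loc}$, giving \eqref{xi+}. The technical heart of the argument is to ensure that every term in the bilinear estimate is absolutely integrable despite the unboundedness of $u$; the conditions \eqref{growthrate}--\eqref{quadratic2} are chosen exactly so that the commutator-style Taylor expansions of $f$ and $\eta$ available in the $L^\infty_{\rm loc}$ regime survive here, the strict-convexity lower bound on $B_\varepsilon$ translates into control of $\|u^\varepsilon-u\|_{L^p_{\rm loc}}$, and \eqref{quadratic2} transfers this back to $\|f(u^\varepsilon)-f(u)\|_{L^1_{\rm loc}}$.
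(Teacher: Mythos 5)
Your construction of $w$, the subsolution step via Jensen, and the rough outline of the H\"older continuity (local integrability of $\eta(u)$ from testing \eqref{etamu2}, hence $u\in L^{\beta+1}_{\rm loc}$ in $x$, plus a separate argument in $t$) are consistent with the paper, although for the $t$-regularity the paper actually needs \eqref{quadratic2} to interpolate between $L^\infty_t C^{0,\gamma_1}_x$ and $C^{0,1/\gamma}_t L^1_x$, a step you gloss over. The genuine gap is in the supersolution direction. Your chain ``smallness of $B_\varepsilon$ $\Rightarrow$ strong $L^p_{\rm loc}$ convergence of $u^\varepsilon$ to $u$ $\Rightarrow$ $f(u^\varepsilon)\to f(u)$ in $L^1_{\rm loc}$ $\Rightarrow$ \eqref{xi+}'' cannot work: strong convergence of mollifications is automatic for any $u\in L^p_{\rm loc}$ with $f(u)\in L^1_{\rm loc}$, entropy condition or not, so it cannot single out supersolutions --- indeed a non-entropic stationary shock gives a $w$ for which \eqref{xi+} fails at the contact point even though all of these convergences hold. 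What the definition of a viscosity supersolution requires is that the Jensen defect vanish \emph{at the touching point} (in a suitably averaged sense), not in $L^1_{\rm loc}$, and this is exactly where the paper's localization enters: one perturbs $\varphi_\varepsilon=\varphi-\varepsilon|(t,x)|$, averages over the shrinking sublevel sets $\Omega_{\varepsilon,\delta}=\{w-\varphi_\varepsilon<\delta\}\subset B_{\delta/\varepsilon}(0,0)$, uses the exact identity $\langle((\varphi_\varepsilon)_t,(\varphi_\varepsilon)_x)\rangle_{\varepsilon,\delta}=\langle(-f(u),u)\rangle_{\varepsilon,\delta}$, and bounds the bilinear form $\mathcal{B}_{\varepsilon,\delta}(f,\eta)$ from above by $\frac{1}{\delta}\mu(B_{\delta/\varepsilon})+\frac{\delta}{\varepsilon}+\varepsilon$ (after proving $|\Omega_{\varepsilon,\delta}|\gtrsim\delta^2$) and from below by $\langle\bar\eta(u)\rangle\langle\bar f(u)\rangle\ge 0$.

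A second, quantitative error sits in your treatment of the measure: \eqref{mu0} only controls $\mu(B_r)/r$ along a subsequence $r_k\downarrow 0$, whereas $\mu*\rho_\varepsilon$ at a point scales like $\mu(B_\varepsilon)/\varepsilon^2$, which need not vanish (take $\mu(B_r)\sim r^{3/2}$). The paper exploits the two-parameter structure, choosing $\delta=r_k\varepsilon$ and sending first $r_k\to 0$ then $\varepsilon\to 0$, so that only $\frac{1}{r_k}\mu(B_{r_k})\varepsilon^{-1}$ appears. Finally, the $L^p$ case requires ingredients your sketch does not supply: condition \eqref{quadratic1} is used to bound $\langle|\eta(u)|\rangle_{\varepsilon,r_k\varepsilon}$ and $\langle|q(u)|\rangle_{\varepsilon,r_k\varepsilon}$ (so that the commutator term $I_2$ is finite at all), the possible linear degeneracy of $f$ forces the splitting of $\langle\tilde f(u)\rangle$ over the intervals $I^{\pm}(\langle u\rangle_{\varepsilon,\delta})$ from Proposition \ref{ProCon}, and \eqref{quadratic2} with exponent $\gamma$ is what controls the large-$|u|$ tail $J_N$ via a H\"older estimate against $\langle Q(u)\rangle$; merely asserting that \eqref{growthrate}--\eqref{quadratic2} ``are calibrated precisely for this purpose'' leaves the core of the proof unproved.
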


\begin{remark}\label{gamma}
Notice that, although condition $\eqref{growthrate}$ admits the flux function $f(u)$
with linear growth rate, $f(u)$ can not be a linear function for large $|u|$,
in which case
condition $\eqref{quadratic1}$ does not hold.
Condition $\eqref{quadratic2}$ means that the entropy function $\eta(u)$, used to enforce the uniqueness,
should be able to control the growth of the flux function $f(u)$ as $u\rightarrow \pm \infty$.
%at infinity.
In particular, under conditions $\eqref{growthrate}$--$\eqref{quadratic1}$, it is direct to check that $\eqref{quadratic2}$ holds for $\gamma=\max\{\lambda, 1\}$ if
\begin{equation}\label{gamma1}
|f'(u)|\leq C|\eta'(u)|^\lambda \qquad\,\, \mbox{\rm as $u\rightarrow\pm\infty$}
\end{equation}
for some $\lambda\geq 0$ and sufficiently large $C>0$. In fact, without loss of generality, consider the case that $u\rightarrow\infty$. Then there exist two cases{\rm:}

\smallskip
${\rm (i)}$ If $\lambda \in [0,1]$, $\eqref{growthrate}$ implies that $f'(u)\lesssim \eta'(u)$ so that $f(u)\lesssim \eta(u)$. Then $\eqref{quadratic2}$ holds for $\gamma=1${\rm;}

\smallskip
${\rm (ii)}$ If $\lambda>1$,
it follows from $\eqref{gamma1}$ that
\begin{equation}\label{gamma2}
f'(u)=f'(u)^{\frac{1}{\lambda}}f'(u)^{\frac{\lambda-1}{\lambda}}
\lesssim \eta'(u)\, f'(u)^{\frac{\lambda-1}{\lambda}}
=\eta'(u)^{\frac{1}{\lambda}}\big(\eta'(u)f'(u)\big)^{\frac{\lambda-1}{\lambda}},
\end{equation}
which, by the H\"{o}lder inequality, implies that
\begin{align}\label{gamma3}
f(u)=\int_0^uf'(\xi)\,{\rm d}\xi
&\lesssim \int_0^u\eta'(\xi)^{\frac{1}{\lambda}}
\big(\eta'(\xi)f'(\xi)\big)^{\frac{\lambda-1}{\lambda}}\,{\rm d}\xi\nonumber\\[1mm]
&\leq \Big(\int_0^u\eta'(\xi)\,{\rm d}\xi\Big)^{\frac{1}{\lambda}}
\Big(\int_0^u\eta'(\xi)f'(\xi)\,{\rm d}\xi\Big)^{\frac{\lambda-1}{\lambda}}\nonumber\\[1mm]
&=(\eta(u)-\eta(0))^{\frac{1}{\lambda}}q(u)^{\frac{\lambda-1}{\lambda}}\nonumber\\[1mm]
&\simeq \eta(u)^{\frac{1}{\lambda}}q(u)^{\frac{\lambda-1}{\lambda}}.
\end{align}
This, by $\eqref{quadratic1}$, infers that
\begin{equation}\label{gamma4}
f(u)^{\lambda}\lesssim \eta(u) q(u)^{\lambda-1}\leq \eta(u) Q(u)^{\lambda-1},
\end{equation}
which implies that $\eqref{quadratic2}$ holds for $\gamma=\lambda$.

As shown above, the reason to require $\gamma\geq 1$ is that, since $\eqref{growthrate}$ and $\eqref{quadratic1}$
imply that $\lim_{u \rightarrow \pm\infty}\frac{Q(u)}{f(u)}=\infty$,
$\eqref{quadratic2}$ also holds for $\gamma=1$ if it holds for some $\gamma\in [0,1)$.
\end{remark}

\begin{remark}\label{growthrateg}
To understand conditions \eqref{quadratic1}--\eqref{quadratic2}
on the asymptotic behavior of the flux function $f(u)$ and the entropy function $\eta(u)$, we present the examples that both $f(u)$ and $\eta(u)$ possess the polynomial growth rates, $i.e.$,
for some constants $\accentset{\sim}{M}_1,\accentset{\sim}{M}_2 \geq 0$ and $\alpha,\beta \geq 0$,
\begin{equation}\label{growthrate+}
f'(u)\, \simeq\, \accentset{\sim}{M}_1{\rm sgn}(u)|u|^{\alpha}, \quad \eta'(u)\, \simeq\, \accentset{\sim}{M}_2{\rm sgn}(u)|u|^{\beta} \qquad \mbox{as $u\rightarrow \pm\infty$}.
\end{equation}
\begin{enumerate}
\item[\rm (i)]  For the case that $\alpha>0 $ and $\beta>0$ in $\eqref{growthrate+}$, it is direct to check that
\begin{equation}\label{quadratic4}
|q(u)|\, \sim \,
|u|^{\alpha+\beta+1}, \quad Q(u) \,\sim \,
|u|^{\alpha+\beta+2}
\qquad \mbox{as $u\rightarrow\pm\infty$},
\end{equation}
so that $\eqref{quadratic1}$ holds, and $\eqref{quadratic2}$ holds for $\gamma=\max\{\frac{\alpha+1}{\beta+1},1\}$ and sufficiently large $C>0$.

\smallskip
\item[\rm (ii)]  For the case that $\alpha=0$ and $\beta>0$ in $\eqref{growthrate+}$, consider
the convex flux function $f(u)$ with more detailed asymptotic properties{\rm :} As an example, for some
constants $m_1>0$ and $\tilde{\alpha} \in (0,1)$,
\begin{equation}\label{quadratic6}
f'(u)\, \simeq\, \accentset{\sim}{M}_1 {\rm sgn}(u)(1-m_1|u|^{-\tilde{\alpha}}) \qquad \mbox{\rm as $u \rightarrow \pm\infty$}.
\end{equation}
Then it is direct to check that
\begin{equation}\label{quadratic7}
|q(u)|\,\sim\,
|u|^{\beta+1}, \quad Q(u)\,\sim\,
|u|^{\beta+2-\tilde{\alpha}}
\qquad \mbox{as $u\rightarrow\pm\infty$},
\end{equation}
so that $\eqref{quadratic1}$ holds, and $\eqref{quadratic2}$ holds for $\gamma=1$ and sufficiently large $C>0$.

Furthermore, if $\tilde{\alpha}=1$, then $Q(u)\sim|u|^{\beta+1}\log|u|\sim |q(u)|\log|u|$
so that $\eqref{quadratic1}$ holds, and $\eqref{quadratic2}$ holds for $\gamma=1$ and sufficiently large $C>0$.

\smallskip
\item[\rm (iii)]   For the case that $\alpha>0$ and $\beta=0$ in $\eqref{growthrate+}$,
consider the strictly convex entropy function $\eta(u)$ with more detailed asymptotic properties{\rm :}
As an example, for some constants
$m_2>0$ and $\tilde{\beta} \in (0,1)$,
\begin{equation}\label{quadratic9}
\eta'(u)\,\simeq\, \accentset{\sim}{M}_2{\rm sgn}(u)(1-m_2|u|^{-\tilde{\beta}}) \qquad \mbox{\rm as $u \rightarrow \pm\infty$}.
\end{equation}
Then
%it is direct to check that
\begin{equation}\label{quadratic10}
|q(u)|\,\sim\,
|u|^{\alpha+1}, \quad Q(u)\,\sim\,
|u|^{\alpha+2-\tilde{\beta}}
\qquad \mbox{as $u\rightarrow\pm\infty$},
\end{equation}
so that  $\eqref{quadratic1}$ holds, and $\eqref{quadratic2}$ holds for $\gamma=\frac{\alpha+1-\tilde{\beta}}{1-\tilde{\beta}}$ and sufficiently large $C>0$.

\item[\rm (iv)]   For the case that $\alpha=0$ and $\beta=0$ in $\eqref{growthrate+}$,
as an example, suppose $\eqref{quadratic6}$ and $\eqref{quadratic9}$ hold for some constants
$\tilde{\alpha}, \tilde{\beta} \in (0,1)$ with $\tilde{\alpha}+\tilde{\beta}<1$.
Then
%it is direct to check that
\begin{equation}\label{quadratic13}
|q(u)|\,\sim\, |u|, \quad Q(u)\,\sim\,
|u|^{2-\tilde{\alpha}-\tilde{\beta}}
\qquad \mbox{as $u\rightarrow\pm\infty$},
\end{equation}
so that  $\eqref{quadratic1}$ holds, and $\eqref{quadratic2}$ holds for $\gamma=1$ and sufficiently large $C>0$.

Furthermore, if $\tilde{\alpha}+\tilde{\beta}=1$, then $Q(u)\sim |u|\log|u|\sim |q(u)|\log|u|$ so that $\eqref{quadratic1}$ holds, and $\eqref{quadratic2}$  holds for $\gamma= 1$ and sufficiently large $C>0$.
\end{enumerate}
\end{remark}

\begin{remark}\label{phyentropy}
Suppose that the flux function $f(u)$ and the entropy function $\eta(u)$ satisfies that, for some constants
$\accentset{\sim}{M}_1, \accentset{\sim}{M}_2>0$ and $\alpha>0$,
\begin{equation}\label{quadratic90}
f'(u)\,\simeq\, \accentset{\sim}{M}_1{\rm sgn}(u)|u|^\alpha, \quad
 \eta'(u)\,\simeq\, \accentset{\sim}{M}_2{\rm sgn}(u)(\log|u|+1)
\qquad {\rm as\ } u \rightarrow \pm\infty.
\end{equation}
Then it is direct to check that
\begin{equation}\label{quadratic100}
|q(u)|\,\sim\,
|u|^{\alpha+1}\log|u|, \quad Q(u)\,\sim\,
|u|^{\alpha+2}
\qquad \mbox{as $u\rightarrow\pm\infty$},
\end{equation}
so that  $\eqref{quadratic1}$ holds, and $\eqref{quadratic2}$
holds for $\gamma=\alpha+1$ and sufficiently large $C>0$.

For the case that $\alpha=0$, suppose that the flux function $f(u)$ satisfies $\eqref{quadratic6}$ with $\tilde{\alpha}\in(0,1)$, and the entropy function $\eta(u)$ satisfies $\eqref{quadratic90}$.
Then it is direct to check that
\begin{equation}\label{quadratic101}
|q(u)|\,\sim\,
|u| \log|u|, \quad Q(u)\,\sim\,
|u|^{2-\tilde{\alpha}}
\qquad\,\, \mbox{as $u\rightarrow\pm\infty$},
\end{equation}
so that  $\eqref{quadratic1}$ holds, and $\eqref{quadratic2}$
holds for $\gamma= 1$ and sufficiently large $C>0$.
\end{remark}

\begin{remark}\label{fastg}
If the flux function $f(u)$ grows too fast with respect to
the entropy function
$\eta(u)$, $\eqref{quadratic2}$ may not hold. For example,
suppose that, for some constants $\accentset{\sim}{M}_1, \accentset{\sim}{M}_2>0$
and $\beta\geq 0$,
\begin{equation}\label{fast1}
f'(u)\,\simeq\, \accentset{\sim}{M}_1 {\rm sgn}(u)e^{|u|}, \quad
\eta'(u)\,\simeq\, \accentset{\sim}{M}_2{\rm sgn}(u)|u|^\beta
\qquad\,\, \mbox{\rm as $u \rightarrow \pm\infty$}.
\end{equation}
Then it is direct to check that
\begin{equation}\label{fast2}
|q(u)|\,\sim\,   |u|^{\beta}e^{|u|}, \quad Q(u)\,\sim\, |u|^{\beta+1}e^{|u|}
\qquad \mbox{as $u\rightarrow\pm\infty$},
\end{equation}
so that  $\eqref{quadratic1}$ holds.
 Owing to $\frac{Q(u)}{\eta(u)}\sim e^{|u|}$ and $\frac{Q(u)}{f(u)}\sim |u|^{\beta+1}$, $\eqref{quadratic2}$ does not hold for any $\gamma\geq 1$.
\end{remark}

\begin{remark}\label{sip}
In Theorem $\ref{Thep}$, we assume that  $u\in L^p_{\rm loc}(\Omega)$
such that $Q(u)\in L^1_{\rm loc}(\Omega)$.
For the case in $\eqref{quadratic4}$, we need to require $u\in L^p_{\rm loc}(\Omega)$
with $p\geq \alpha+\beta+2$, which are similar for the cases in
$\eqref{quadratic7}$, $\eqref{quadratic10}$, $\eqref{quadratic13}$, $\eqref{quadratic100}$, and $\eqref{quadratic101}$.
For the case that $Q(u)\sim|u|^{\beta+1}\log|u|$ with $\beta\geq 0$  in Remark {\rm \ref{growthrateg}(ii)} for $\tilde{\alpha}=1$,
it suffices to require only that $|u|^{\beta+1}\log|u|\in L^1_{{\rm loc}}(\Omega)$.
\end{remark}

\begin{remark}\label{muoptimal}
Condition $\eqref{mu0}$ on the non-negative Radon measure is optimal{\rm ;} otherwise, $\eqref{etamu2}$ can not enforce the uniqueness if there exists
$c_0>0$ such that, for small $r>0$,
\begin{equation}\label{muc0}
\mu(B_r(\bar{t},\bar{x}))\geq c_0r \qquad \mbox{for some points $(\bar{t},\bar{x})\in \Omega$}.
\end{equation}
In fact, for the Riemann problem of $\eqref{equationu}$ for
$u_0(x)=u_\pm$ with $u_+>u_-$ for $\pm(x-x_0)>0$, $\eqref{etamu2}$ with $\eqref{muc0}$
admits two weak solutions{\rm :} one is clearly a rarefaction wave,
and the other is an under-compressive shock $\mathcal{S}$ {\rm(}passing through $\Omega${\rm )} defined by
\begin{equation}\label{unshock}
 u(t,x)=\begin{cases}
u_-\quad\mbox{if $x<x_0+s_0t$},\\[1mm]
u_+\quad\mbox{if $x>x_0+s_0 t$},
\end{cases}
\end{equation}
for sufficiently small $u_+-u_->0$  with $f'(u_+{-0})>f'(u_-{+0})$,
where the shock speed $s_0$ is determined by $s_0=\frac{[f(u)]_\pm}{[u]_\pm}$.

In order to show that the under-compressive shock $u(t,x)$ in $\eqref{unshock}$
satisfies $\eqref{etamu2}$ with $\eqref{muc0}$,
it suffices to check
that $\mu_\eta:=\eta(u)_t+q(u)_x\leq\mu$ holds for points $(\bar{t},\bar{x})\in \Omega \cap \mathcal{S}$. Let $u_+-u_->0$ be sufficiently small such that
\begin{equation}\label{uuc0}
0< \int_{u_-}^{u_+}\eta'(\xi)\big(f'(\xi)-s_0\big)\,{\rm d}\xi\leq \frac{c_0}{2},
\end{equation}
where, by $\eqref{calP}$ later, the left-hand inequality always holds for $u_+>u_-$ with $f'(u_+{-0})>f'(u_-{+0})$.
According to $\eqref{muc0}$--$\eqref{uuc0}$, for sufficiently small $r>0$, we obtain, as desired,
\begin{align*}
\mu_\eta(B_r(\bar{t},\bar{x}))
&= \int_{B_r(\bar{t},\bar{x})}\big(\eta(u)_t+q(u)_x\big)\,{\rm d}t{\rm d}x\\
&=2r
\big(s_0(\eta(u_-)-\eta(u_+))-(q(u_-)-q(u_+))\big) \\
&= 2r\int_{u_-}^{u_+}\eta'(\xi)\big(f'(\xi)-s_0\big)\,{\rm d}\xi
\leq c_0r\leq \mu(B_r(\bar{t},\bar{x})).
\end{align*}
\end{remark}

\begin{remark}\label{nonconvex}
 Our proofs depend highly on the convexity of the flux functions $f(u)$,
 $i.e.$, $f'(u)$ is nondecreasing.
 As shown in Dafermos {\rm \cite{[DCM1]}},
both the regularity and large-time behavior of entropy solutions of
the scalar conservation laws without convexity are highly related to
the quantity of $f'(u(t,x))$, instead of the quantity of solution $u(t,x)$ itself.
For the case of the flux function $f(u)$ without convexity, a single strictly convex entropy function $\eta(u)$
may not be sufficient to enforce the uniqueness{\rm ;} instead of which the minimal number
of strictly convex entropy functions $\eta(u)$ needed to enforce the uniqueness
may depend on the number of inflection points of the flux function $f(u)$,
or equivalently the number of the maximum intervals, on each of which function $f'(u)$,
the first derivative of the flux function $f(u)$, is monotone.
\end{remark}

As mentioned above,
our proofs are based on the equivalence between the entropy solutions
of the one-dimensional scalar conservation laws $\eqref{equationu}$
and the viscosity solutions of the corresponding Hamilton-Jacobi equations $\eqref{equationh}$.

In fact, this can be seen via the vanishing viscosity method:
Let $w(t,x)$ be the unique viscosity solution of $\eqref{equationh}$
with the Cauchy initial data:
\begin{equation}\label{ID-HJ}
w|_{t=0}=w_0(x).
\end{equation}
Then
it can be proved that $w(t,x)$ can be regarded as the limit function of
the viscosity approximate solution sequence $w^\varepsilon$ when $\varepsilon\to 0$,
as proved in Crandall-Lions \cite{[CL]},
where $w^\varepsilon$ is the unique solution of the Cauchy problem:
\begin{equation}\label{equationhe}
w^\varepsilon_t+f(w^\varepsilon_x)=\varepsilon w^\varepsilon_{xx}
\end{equation}
with the Cauchy initial data $\eqref{ID-HJ}$
for each fixed $\varepsilon>0$.
Furthermore,
$v^\varepsilon :=w^\varepsilon_x$ solves
\begin{equation}\label{equationve}
v^\varepsilon_t+f(v^\varepsilon)_x=\varepsilon v^\varepsilon_{xx},
\end{equation}
and $v^\varepsilon \rightarrow v=w_x$, where $v$ is the unique entropy solution
of $\eqref{equationu}$ with $v(0,x)=w_0'(x)$.

On the other hand, if a function $\bar{w}$ defined by $\bar{w}_x=u$ and $\bar{w}_t=-f(u)$
as in Theorem $\ref{Theinfty}$ ($resp.,$ Theorem $\ref{Thep}$)
is a viscosity solution of $\eqref{equationh}$ and $\eqref{ID-HJ}$, by the uniqueness of viscosity solutions, we conclude that
$$
w=\bar{w}, \qquad w_x\mathop{=}\limits^{a.e.}\bar{w}_x.
$$
For more details, see Crandall-Evans-Lions \cite{[CEL]}
for the case of bounded $w_0(x)$ and Ishii \cite{[HI]} for the case of unbounded $w_0(x)$.

Therefore, function $u$ in Theorem $\ref{Theinfty}$ ($resp.,$ Theorem  $\ref{Thep}$) satisfies that
$$
u=\bar{w}_x\mathop{=}\limits^{a.e.}w_x=v,
$$
which means that this function $u$
is the unique entropy solution of the scalar conservation law $\eqref{equationu}$.

\medskip
In a similar way, if $u(t,x)$ is the unique entropy solution of the scalar
conservation law $\eqref{equationu}$ with $u_0(x)=w'_0(x)$,
then we
can prove that function $w(t,x)$ defined by $w_x=u$ and $w_t=-f(u)$ satisfying $\eqref{ID-HJ}$ is the viscosity solution of $\eqref{equationh}$ and $\eqref{ID-HJ}$.

\begin{remark}\label{entropysolution}
According to Cao-Chen-Yang {\rm \cite{[CCY]}},
for a strictly convex flux function $f(u)$ of $\eqref{equationu}$ as in $\eqref{convexg}$,
$w=w(t,x)$ defined by $w_x=u$ and $w_t=-f(u)$ is Lipschitz continuous.
Furthermore, $w^\pm_x(t,x)=u^\pm(t,x)$ are well-defined pointwise with $u^+(t,x)=u^-(t,x)$
almost everywhere and
\begin{equation}\label{traces}
w^+_x(t,x)=u^+(t,x)\leq u^-(t,x)=w^-_x(t,x).
\end{equation}
%Notice that
As shown in Corollary $\ref{Oleinikentropy}$ below,
$\eqref{traces}$ is equivalent to the following shock admissibility condition{\rm :}
$$
s\big(\eta(u^+)-\eta(u^-)\big)-\big(q(u^+)-q(u^-)\big)\geq 0
$$
for any convex entropy-entropy pair $(\eta(u),q(u))${\rm ;} see also {\rm Dafermos \cite{[DCM]}}.
This implies that $u=w_x$ is actually the entropy solution of the scalar conservation law
$\eqref{equationu}$.
\end{remark}

To complete the proof of Theorems $\ref{Theinfty}$ ($resp.,$ Theorem  $\ref{Thep}$), it suffices
to prove that $w$ defined by $w_x=u$ and $w_t=-f(u)$ as in Theorem $\ref{Theinfty}$ ($resp.,$ Theorem $\ref{Thep}$)
is a viscosity solution of  $\eqref{equationh}$
for $u\in L^\infty_{\rm loc}$ ($resp.,$ $u\in L^p_{\rm loc}$).

\section{Averages of Functions and Properties of Linear Degeneracy for Convex Functions}
In order to prove the main theorems,
we need Proposition $\ref{ProB}$ below on the averages of functions,
which generalizes Proposition 3.2 in De Lellis-Otto-Westdickenberg \cite{[LOW]},
and Proposition $\ref{ProCon}$ on the properties of linear degeneracy for general convex functions.

\begin{definition}[Averages of Functions]\label{average}
Assume that $\mu$ is a probability measure on ${\Bbb R}$.
For every vector-valued map $h\in L^1({\Bbb R},\mu)$, set
\begin{equation}\label{averagev}
\langle h(u)\rangle:=\int_\Omega h(u)\,{\rm d}\mu(u).
\end{equation}
Let $f(u),\eta(u)\in W^{1,\infty}_{\rm loc}({\Bbb R})$
and $q(u):=\int^u_0\eta'(\xi)f'(\xi)\,{\rm d}\xi$.
When $\mu$ is compactly supported, we define the bilinear form{\rm :}
\begin{equation}\label{defB}
\begin{split}
\mathcal{B}(f,\eta)&:=\langle (-f(u),u)\cdot (\eta(u),q(u))\rangle-\langle (-f(u),u)\rangle\cdot \langle (\eta(u),q(u))\rangle\\
&\,=\langle uq(u)\rangle-\langle u\rangle \langle q(u) \rangle
- \big(\langle \eta(u) f(u)\rangle -\langle\eta(u)\rangle\langle f(u)\rangle\big).
\end{split}
\end{equation}
When $\mu$ is of noncompact support, we define $\mathcal{B}(f,\eta)$ whenever the
functions in $\eqref{defB}$ are all $\mu-$summable.
\end{definition}

\smallskip
Then we have the following properties of the bilinear form $\mathcal{B}(f,\eta)$ in $\eqref{defB}$:

\begin{proposition}[Bilinear Form]\label{ProB}
Let $f(u),\eta(u)\in W^{1,\infty}_{\rm loc}({\Bbb R})$ and $q(u):=\int^u_0\eta'(\xi)f'(\xi)\,{\rm d}\xi$.
\begin{enumerate}
\item [\rm (i)] If $\bar{f}(u)=f(u)-(au+b)$ and $\bar{\eta}(u)=\eta(u)-(cu+d)$
for constants $a, b, c$, and $d$, then
\begin{equation}\label{linearB}
\mathcal{B}(f,\eta)=\mathcal{B}(\bar{f},\eta)=\mathcal{B}(f,\bar{\eta})= \mathcal{B}(\bar{f},\bar{\eta}).
\end{equation}

\item [\rm (ii)] If $f(u)$ and $\eta(u)$ are both convex,
then
\begin{equation}\label{convexB}
\mathcal{B}(f,\eta)
\geq \langle \eta(u)-\eta (\langle u\rangle ) \rangle \langle f(u)-f(\langle u\rangle ) \rangle
\geq 0.
\end{equation}
\end{enumerate}
\end{proposition}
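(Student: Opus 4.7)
The plan is to verify (i) by direct algebra and then to bootstrap (i) into (ii) via a convenient affine normalization combined with a Fubini-type identity.

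For (i), the key observation is that if $\bar f(u)=f(u)-(au+b)$, then the associated entropy flux is
\[
\bar q(u)=\int_0^u\eta'(\xi)\bar f'(\xi)\,d\xi=q(u)-a\big(\eta(u)-\eta(0)\big).
\]
Substituting into the defining expression for $\mathcal{B}(\bar f,\eta)$, the additive constants $b$ and $\eta(0)$ drop out of the bilinear combinations because $\langle XY\rangle-\langle X\rangle\langle Y\rangle$ is invariant under adding a constant to $X$ or $Y$. The $a$-dependent contributions in $\langle u\bar q\rangle-\langle u\rangle\langle\bar q\rangle$ and in $-\langle\eta\bar f\rangle+\langle\eta\rangle\langle\bar f\rangle$ are $-a(\langle u\eta\rangle-\langle u\rangle\langle\eta\rangle)$ and $+a(\langle u\eta\rangle-\langle u\rangle\langle\eta\rangle)$, which cancel, giving $\mathcal{B}(\bar f,\eta)=\mathcal{B}(f,\eta)$. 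A symmetric computation yields $\mathcal{B}(f,\bar\eta)=\mathcal{B}(f,\eta)$, and composition delivers $\mathcal{B}(\bar f,\bar\eta)=\mathcal{B}(f,\eta)$.

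For (ii), I would use (i) to shift $f$ and $\eta$ into a form in which $\bar u:=\langle u\rangle$ is a minimum. Picking subgradients $s\in\partial f(\bar u)$ and $t\in\partial\eta(\bar u)$ (which exist since $f,\eta$ are convex and locally Lipschitz), set
\[
\bar f(u):=f(u)-f(\bar u)-s(u-\bar u),\qquad \bar\eta(u):=\eta(u)-\eta(\bar u)-t(u-\bar u).
\]
Both $\bar f,\bar\eta$ are non-negative convex functions vanishing at $\bar u$; their a.e.\ derivatives satisfy $\bar f'(u)(u-\bar u)\ge 0$ and $\bar\eta'(u)(u-\bar u)\ge 0$, so $\bar\eta'\bar f'\ge 0$ a.e. Moreover, $\langle\bar f\rangle=\langle f-f(\langle u\rangle)\rangle\ge 0$ and $\langle\bar\eta\rangle=\langle\eta-\eta(\langle u\rangle)\rangle\ge 0$ by Jensen's inequality, which already establishes the second (trivial) inequality in (ii). By (i), $\mathcal{B}(f,\eta)=\mathcal{B}(\bar f,\bar\eta)$, so it suffices to show $\mathcal{B}(\bar f,\bar\eta)\ge\langle\bar\eta\rangle\langle\bar f\rangle$.

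The core step is a pointwise Fubini identity. Representing $u-\bar u=\int_{\bar u}^u d\tau$, $\bar q(u)-\bar q(\bar u)=\int_{\bar u}^u\bar\eta'\bar f'\,d\xi$, and similarly $\bar\eta(u)=\int_{\bar u}^u\bar\eta'\,d\xi$, $\bar f(u)=\int_{\bar u}^u\bar f'\,d\tau$ (with consistent orientation yielding correct signs in both cases $u>\bar u$ and $u<\bar u$), one obtains
\[
(u-\bar u)(\bar q(u)-\bar q(\bar u))-\bar\eta(u)\bar f(u)=\frac{1}{2}\iint_{I(u)^2}[\bar\eta'(\xi)-\bar\eta'(\tau)][\bar f'(\xi)-\bar f'(\tau)]\,d\xi d\tau,
\]
where $I(u)=[\bar u\wedge u,\bar u\vee u]$. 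Because $\bar\eta'$ and $\bar f'$ are both nondecreasing by convexity, the integrand is pointwise non-negative, so the left-hand side is $\ge 0$ for each $u$. Integrating against $d\mu(u)$ and using $\int(u-\bar u)d\mu=0$ (so the $\bar q(\bar u)$ contribution vanishes) gives $\langle u\bar q\rangle-\langle u\rangle\langle\bar q\rangle\ge\langle\bar\eta\bar f\rangle$, which, after substitution into the definition of $\mathcal{B}(\bar f,\bar\eta)$, produces $\mathcal{B}(\bar f,\bar\eta)\ge\langle\bar\eta\rangle\langle\bar f\rangle$. Combined with (i), this is the target inequality.

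The main obstacle is bookkeeping rather than analysis: tracking signs when $u<\bar u$ in the Fubini identity, and justifying the calculus manipulations at the level of locally Lipschitz $f,\eta$ using a.e.-defined derivatives and subgradients at the single point $\bar u$. Summability of the relevant products is built into the definition of $\mathcal{B}$, so the noncompactly supported case of $\mu$ needs no separate treatment.
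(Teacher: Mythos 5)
Your proposal is correct. Part (i) is essentially the paper's own computation: the identity $\bar q(u)=q(u)-a\bigl(\eta(u)-\eta(0)\bigr)$ together with the invariance of $\langle XY\rangle-\langle X\rangle\langle Y\rangle$ under adding constants and the cancellation of the $a$- (resp.\ $c$-) terms. For part (ii), the overall architecture also matches the paper — both reduce \eqref{convexB} to the pointwise nonnegativity of $(u-\langle u\rangle)\bigl(q(u)-q(\langle u\rangle)\bigr)-\bigl(\eta(u)-\eta(\langle u\rangle)\bigr)\bigl(f(u)-f(\langle u\rangle)\bigr)$ plus the Jensen term $\langle\eta(u)-\eta(\langle u\rangle)\rangle\langle f(u)-f(\langle u\rangle)\rangle$ — but your proof of the key pointwise step is genuinely different: the paper proves it by comparing $\eta'$ with its value $d(\zeta)$ at the point $\zeta$ where $f'$ crosses the chord slope $c(\zeta)$ (the argument in \eqref{calP}--\eqref{equationP00}), whereas you use the Chebyshev-type symmetrization $\tfrac12\iint_{I(u)^2}\bigl[\eta'(\xi)-\eta'(\tau)\bigr]\bigl[f'(\xi)-f'(\tau)\bigr]\,{\rm d}\xi\,{\rm d}\tau\ge 0$ for the nondecreasing functions $\eta',f'$. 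Your route avoids the one-sided values $f'(\zeta\pm0)$, $\eta'(\zeta\pm0)$ and exhibits the defect as an explicit nonnegative double integral, which is arguably cleaner and more quantitative; the paper's chord argument, by contrast, never leaves the class of functions whose $\mu$-summability is assumed in Definition \ref{average}, since its part (ii) does not pass through the affine normalization at all. That is the one caveat in your write-up: for noncompactly supported $\mu$, the separate averages $\langle u\bar q(u)\rangle$ and $\langle\bar\eta(u)\bar f(u)\rangle$ involve moments such as $\langle u\eta(u)\rangle$, $\langle uf(u)\rangle$, $\langle u^2\rangle$, which are not literally ``built into the definition''; this is harmless because your pointwise quantity collapses, after cancellation of the subgradient terms, to $(u-\langle u\rangle)\bigl(q(u)-q(\langle u\rangle)\bigr)-\bigl(\eta(u)-\eta(\langle u\rangle)\bigr)\bigl(f(u)-f(\langle u\rangle)\bigr)$, which involves only the functions whose summability is assumed, but you should integrate in that collapsed form rather than splitting into the barred pieces.
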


\begin{proof} We now give the proof for the two cases respectively.

\smallskip
(i) Since $\bar{f}(u)=f(u)-(au+b)$,
\begin{equation}\label{qbar}
\bar{q}(u)=\int^u_0\eta'(\xi)\bar{f}'(\xi)\,{\rm d}\xi=\int^u_0\eta'(\xi)\big(f'(\xi)-a\big)\,{\rm d}\xi=q(u)-a\eta(u)+a\eta(0).
\end{equation}
Then, by the definition of $\mathcal{B}(f,\eta)$ in $\eqref{defB}$,
we have
\begin{align*}
\mathcal{B}(f,\eta)&=\mathcal{B}(\bar{f}(u)+au+b,\eta (u))\\[1mm]
&=\langle u(\bar{q}(u)+a\eta(u)-a\eta(0))\rangle -\langle u\rangle \langle \bar{q}(u)+a\eta(u)-a\eta(0)\rangle
 \\[1mm]
&\quad - \langle\eta(u)(\bar{f}(u)+au+b)\rangle
  + \langle \bar{f}(u)+au+b\rangle
  \langle \eta(u)\rangle
  \\[1mm]
&=\langle u\bar{q}(u)\rangle  -\langle u\rangle\langle \bar{q}(u)\rangle- \langle \eta(u)\bar{f}(u)\rangle +\langle\bar{f}(u)\rangle \langle \eta(u)\rangle
=\,\mathcal{B}(\bar{f},\eta).
\end{align*}
Similarly, for $\bar{\eta}(u)=\eta(u)-(cu+d)$, we have
$$
\mathcal{B}(f,\eta)=\mathcal{B}(f,\bar{\eta})=\mathcal{B}(\bar{f},\bar{\eta}).
$$

\smallskip
(ii) By the definition of $\mathcal{B}(f,\eta)$ in $\eqref{defB}$, we have
\begin{align}\label{calB}
\mathcal{B}(f,\eta)&=\big\langle (u- \langle u\rangle)q(u) \big\rangle-\big\langle\big(\eta(u)- \langle \eta(u)\rangle\big)\big(f(u)-f(\langle u\rangle)\big)\big\rangle\nonumber\\[1mm]
&=\big\langle (u- \langle u\rangle)q(u) \big\rangle -\big\langle \big(\eta(u)-\eta(\langle u\rangle)\big)\big(f(u)-f( \langle u\rangle)\big)\big\rangle\nonumber\\[1mm]
&\qquad +\big\langle \big(\langle\eta(u) \rangle -\eta(\langle u\rangle)\big)\big(f(u)-f(\langle u\rangle)\big)\big\rangle\nonumber\\[1mm]
&=\big\langle (u- \langle u\rangle)q(u)-\big(\eta(u)-\eta(\langle u\rangle)\big)\big(f(u)-f( \langle u\rangle)\big)\big\rangle\nonumber\\[1mm]
&\qquad +\big\langle\eta(u) -\eta(\langle u\rangle)\big\rangle \big\langle f(u)-f(\langle u\rangle)\big\rangle\nonumber\\[1mm]
&=: \langle P(u,\langle u\rangle)\rangle+\langle Q(u,\langle u\rangle)\rangle,
\end{align}

For $\langle P(u,\langle u\rangle)\rangle$, $P(u,\langle u\rangle)=0$ when $u=\langle u\rangle$.
When $u\neq\langle u\rangle$,  by the nondecreasing of $f'(u)$,
there exists at least
one $\zeta$ such that
$$
c(\zeta):=\frac{f(u)-f(\langle u\rangle)}{u-\langle u\rangle}\in [f'(\zeta{-0}),f'(\zeta{+0})].
$$
Since  $f'(u)$ and $\eta'(u)$ are both nondecreasing,
we see that,
if $u> \langle u\rangle$, then
\begin{align}\label{calP}
P(u,\langle u\rangle)&= (u-\langle u\rangle )\int^u_{ \langle u\rangle }\eta'(\xi)\big(f'(\xi)-c(\zeta)\big)\,{\rm d}\xi \nonumber\\
&=(u- \langle u\rangle ) \Big(\int^u_\zeta\eta'(\xi)\big(f'(\xi)-c(\zeta)\big)\,{\rm d}\xi - \int^\zeta_{\langle u\rangle}\eta'(\xi)\big(c(\zeta)-f'(\xi)\big)\,{\rm d}\xi\Big)\nonumber\\
&\geq (u- \langle u\rangle ) \Big(\int^u_\zeta d(\zeta)\big(f'(\xi)-c(\zeta)\big)\,{\rm d}\xi- \int^\zeta_{\langle u\rangle}d(\zeta)\big(c(\zeta)-f'(\xi)\big)\,{\rm d}\xi\Big)\nonumber\\
&=(u- \langle u\rangle ) d(\zeta)\int^u_{\langle u\rangle} \big(f'(\xi)-c(\zeta)\big)\,{\rm d}\xi\nonumber\\
&=0,
\end{align}
where $d(\zeta)\in [\eta'(\zeta{-0}),\eta'(\zeta{+0})]$. Similarly, if $u< \langle u\rangle$, then
\begin{equation}\label{equationP00}
P(u,\langle u\rangle)=\big(\langle u\rangle-u\big) \int^{\langle u\rangle}_u\eta'(\xi)\big(f'(\xi)-c(\zeta)\big)\,{\rm d}\xi\geq 0.
\end{equation}
Thus, we conclude that $P(u,\langle u\rangle)\geq 0$ so that
\begin{equation}\label{equationP}
\langle P(u,\langle u\rangle)\rangle\geq 0.
\end{equation}

For $\langle Q(u,\langle u\rangle)\rangle$, since $f(u)$ and $\eta(u)$ are both convex, we use the Jensen inequality
to see that
\begin{equation}\label{equationQ}
\langle f(u)\rangle -f(\langle u\rangle )\geq 0, \qquad \langle \eta(u)\rangle -\eta(\langle u\rangle )\geq 0.
\end{equation}

Combining $\eqref{calB}$--$\eqref{equationQ}$ together, we conclude that
$$
\mathcal{B}(f,\eta)=\langle P(u,\langle u\rangle)\rangle+\langle Q(u,\langle u\rangle)\rangle\geq \langle \eta(u)-\eta(\langle u\rangle)\rangle\langle f(u)-f(\langle u\rangle)\rangle\geq 0,
$$
as desired.
\end{proof}

\begin{remark}\label{uniformlyconvex}
If $f(u),\eta(u)\in C^2({\Bbb R})$ are both uniformly convex with $c_1:=\inf f''(u)>0$ and $c_2:=\inf \eta''(u)>0$, in view of $\eqref{linearB}$, choose $\bar{f}(u)$ and $\bar{\eta}(u)$ such that
$\bar{f}(\langle u\rangle)=\bar{f}'(\langle u\rangle )=0$ and $\bar{\eta}(\langle u\rangle )=\bar{\eta}'(\langle u\rangle)=0$. By a simple calculation, we have
\begin{equation}\label{convexBuniform}
\mathcal{B}(f,\eta)=\mathcal{B}(\bar{f},\bar{\eta})\geq\langle \bar{\eta}(u) \rangle \langle \bar{f}(u) \rangle\geq \frac{c_1c_2}{4}\langle |u-\langle u\rangle |^2 \rangle^2\geq \frac{c_1c_2}{4}\langle |u-\langle u\rangle|\rangle^4.
\end{equation}
\end{remark}

Finally, we give some properties of a general convex function $f(u)\in {\rm Lip}_{{\rm loc}}({\Bbb R})$.

\begin{proposition}[Linear Degeneracy]\label{ProCon}
Let $f(u)\in {\rm Lip}_{{\rm loc}}({\Bbb R})$ be convex,
and let the point sets $I^{\pm}(u)$ be defined by
\begin{equation}\label{degenI}
I^{\pm}(u):=\big\{v\in \bar{{\Bbb R}}\,:\, f(v)-f(u)-f'(u{\pm 0})(v-u)=0\big\}
\qquad\,\, \mbox{\rm for any $u\in {\Bbb R}$},
\end{equation}
where $\bar{{\Bbb R}}:=[-\infty,\infty]$. Then
\begin{enumerate}
\item [\rm (i)] $I^{\pm}(u)$ is a single point set or a closed interval in $\bar{{\Bbb R}}$ given by
\begin{equation}\label{degenIc}
I^{\pm}(u)=[\inf I^{\pm}(u),\ \sup I^{\pm}(u) ].
\end{equation}
In particular, if $f'(u)$ is strictly increasing, then $I^\pm(u)=\{u\}$ for any $u\in {\Bbb R}$.

\item [\rm (ii)] The first derivative of function $f(u)$ satisfies that
\begin{equation}\label{degenIp}
\begin{cases}
f'(v)<f'(u{\pm 0}) \qquad {\rm for \ } v< \inf I^{\pm}(u),\\[1mm]
f'(v)=f'(u{\pm 0}) \qquad {\rm for \ } v \in I^{\pm}(u),\\[1mm]
f'(v)>f'(u{\pm 0}) \qquad {\rm for \ } v> \sup I^{\pm}(u).
\end{cases}
\end{equation}
Furthermore, if $I^{\pm}(u)$ is a closed interval,  then $f(u)$
is linearly degenerate on $I^{\pm}(u)$.
\end{enumerate}
\end{proposition}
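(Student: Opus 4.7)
The plan is to reduce the whole statement to the study of the zero set of a single nonnegative convex continuous auxiliary function. Define
\[
g^{\pm}(v) := f(v) - f(u) - f'(u{\pm 0})(v-u).
\]
Since $f'(u{\pm 0})$ is a subgradient of the convex $f$ at $u$, the affine map subtracted off is a supporting line of $f$ at $u$, so $g^{\pm}$ is convex, continuous, nonnegative on ${\Bbb R}$, and vanishes at $u$. Hence $I^{\pm}(u)=\{v\in\bar{\Bbb R}:g^{\pm}(v)=0\}$ contains $u$, and the proposition becomes a statement about this zero set.

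For part (i), I would use the standard fact that the zero set of a nonnegative convex function is itself convex: if $g^{\pm}(v_1)=g^{\pm}(v_2)=0$, convexity gives $g^{\pm}\leq 0$ on the segment joining $v_1,v_2$ while nonnegativity gives $\geq 0$, so $g^{\pm}\equiv 0$ on the whole segment $[\min(v_1,v_2),\max(v_1,v_2)]$. Continuity then closes the set in ${\Bbb R}$, and appending $\pm\infty$ whenever $g^{\pm}$ vanishes on the appropriate tail yields $I^{\pm}(u)=[\inf I^{\pm}(u),\sup I^{\pm}(u)]$ in $\bar{\Bbb R}$. When $f'$ is strictly increasing, any $v\in I^{\pm}(u)\setminus\{u\}$ would, by the same segment observation, force $f$ to be affine with constant slope $f'(u{\pm 0})$ on the nondegenerate interval joining $u$ to $v$, contradicting the strict monotonicity of $f'$; hence $I^{\pm}(u)=\{u\}$.

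For part (ii), the middle line $f'(v)=f'(u{\pm 0})$ on $I^{\pm}(u)$ and the linear-degeneracy claim both fall out from a single observation: because $g^{\pm}\equiv 0$ on $I^{\pm}(u)$, the identity $f(v)=f(u)+f'(u{\pm 0})(v-u)$ holds on $I^{\pm}(u)$, so $f$ is affine there with slope $f'(u{\pm 0})$, which forces $f'(v)=f'(u{\pm 0})$ at interior points (and from the interior side at the endpoints). For the strict inequalities, I would argue by contradiction; for instance, if $v>\sup I^{\pm}(u)$ but $f'(v{-0})\leq f'(u{\pm 0})$, then monotonicity of $f'$ would squeeze $f'$ to be constant between $u$ and $v$, forcing $f$ to be affine with slope $f'(u{\pm 0})$ on $[u,v]$, and so $v\in I^{\pm}(u)$, a contradiction; the case $v<\inf I^{\pm}(u)$ is symmetric. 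The main (and purely technical) care lies in keeping $f'(u{-0})$ separate from $f'(u{+0})$ at kinks of $f$---so that $I^{+}(u)$ and $I^{-}(u)$ may sit on opposite sides of $u$---and in handling the endpoints $\pm\infty$ via the natural limiting convention for $g^{\pm}$; beyond that, every step is an immediate consequence of the convexity of $g^{\pm}$.
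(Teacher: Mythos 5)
Your proposal is correct and follows essentially the same route as the paper: your observation that the zero set of the nonnegative convex function $g^{\pm}$ is an interval is exactly the paper's combination of the supporting-line inequality with the convexity inequality on the segment $[u_1,u_2]$, and your part (ii) argument (affineness on $I^{\pm}(u)$ for the equality, and monotonicity of $f'$ forcing constancy, hence membership in $I^{\pm}(u)$, for the strict inequalities) is the contrapositive form of the paper's ``$f'(v)=f'(u{\pm 0})$ if and only if $v\in I^{\pm}(u)$'' step. The packaging via $g^{\pm}$ and your explicit care with one-sided derivatives at kinks and at the endpoints are fine refinements but not a different proof.
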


\begin{proof} We prove the two properties in two steps, respectively.

\smallskip
(i)\ First, it follows from $\eqref{degenI}$ that $u\in I^{\pm}(u)$, $i.e.$, $I^\pm(u)\neq \emptyset$.

If $I^\pm(u)$ has more than one point, then, for any two points $u_1,u_2\in I^\pm(u)$ with $u_1<u_2$,
$[u_1,u_2]\subset I^\pm(u)$. In fact, by the definition of $I^\pm(u)$ in $\eqref{degenI}$, we have
\begin{equation}\label{degenId}
f(u_i)-f(u)-f'(u{\pm 0})(u_i-u)=0 \qquad\,\, \mbox{for $i=1,2$}.
\end{equation}
Since $f(u)$ is convex, for any $v\in (u_1,u_2)$, {\it i.e.},
$v=\theta u_1+(1-\theta)u_2$ for $\theta \in (0,1)$,
\begin{equation}\label{degenIdc+}
f(v)\geq f(u)+f'(u{\pm 0})(v-u).
\end{equation}
On the other hand, from $\eqref{degenId}$, we have
\begin{equation}\label{degenIdc-}
f(v)=f(\theta u_1+(1-\theta)u_2)\leq \theta f(u_1)+(1-\theta)f(u_2)=f(u)+f'(u{\pm 0})(v-u).
\end{equation}
Combining $\eqref{degenIdc+}$ with $\eqref{degenIdc-}$, $v\in I^\pm(u)$ so that
$[u_1,u_2]\subset I^\pm(u)$.
Thus, $I^\pm(u)$ is an interval. By the continuity of function $f(u)$, $I^\pm(u)$
is a closed interval in $\bar{{\Bbb R}}$ given by $\eqref{degenIc}$.

For the case that $f'(u)$ is strictly increasing, the definition of $I^\pm(u)$ implies that $I^\pm(u)=\{u\}$ for any $u\in {\Bbb R}$.

\smallskip
(ii)\ If $v\in I^\pm(u)$, the definition of $I^\pm(u)$ and $\eqref{degenIc}$ imply that $f'(v)= f'(u{\pm 0})$.
On the other hand, $u\in I^\pm(u)$ and, if $f'(v)= f'(u{\pm 0})$ with $v\neq u$, then $f'(\xi)$
is constant for $\xi$ lying between $v$ and $u$, which
implies that $v\in I^\pm(u)$ by the definition of $I^\pm(u)$ and (i).
We conclude that $f'(v)= f'(u{\pm 0})$
if and only if  $v\in I^\pm(u)$. Furthermore, by the nondecreasing of $f'(v)$, $f'(v)\leq f'(u{\pm 0})$  for $v<u$ and $f'(v)\geq f'(u{\pm 0})$ for $v>u$. Hence, $\eqref{degenIp}$ is true.

Furthermore, if $I^{\pm}(u)$ is a closed interval, the definition of $I^{\pm}(u)$ implies that the convex function $f(u)$ is linearly degenerate on $I^{\pm}(u)$.
\end{proof}

\begin{corollary}\label{Oleinikentropy}
Let $f(u)$ and $\eta(u)$ be both convex and locally Lipschitz.
%Let $f(u)\in {\rm Lip}_{\rm loc}({\Bbb R})$ be a convex,
%and let $\eta(u)\in {\rm Lip}_{\rm loc}({\Bbb R})$ be a strictly convex
%as in $\eqref{convexg}$.
Denote $q(u):=\int_0^u\eta'(\xi)f'(\xi){\rm d}\xi$. Then,
for any $u,v\in{\Bbb R}$,
\begin{equation}\label{degenIcnonlinear}
P(v,u):=(v-u)\big(s(\eta(u)-\eta(v))-(q(u)-q(v))\big)\geq 0
\qquad \mbox{for $s=\frac{f(u)-f(v)}{u-v}$}.
\end{equation}
Moreover, if $\eta(u)$ is strictly convex as in $\eqref{convexg}$, then
\begin{equation}\label{degenIO}
P(v,u)=0 \quad {\rm for}\ v \in I^-(u)\cup I^+(u), \qquad  P(v,u)>0
\quad {\rm for}\ v\notin I^-(u)\cup I^+(u).
\end{equation}
\end{corollary}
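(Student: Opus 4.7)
The plan is to reduce $P(v,u)$ to a single integral over the interval joining $u$ and $v$, and then apply the same monotonicity-based comparison used in the proof of Proposition \ref{ProB}(ii) (see \eqref{calP}--\eqref{equationP00}), with $\langle u\rangle$ replaced by $v$.

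First, using $\eta(u)-\eta(v)=\int_v^u \eta'(\xi)\,{\rm d}\xi$ and $q(u)-q(v)=\int_v^u \eta'(\xi)f'(\xi)\,{\rm d}\xi$ (from $q'=\eta'f'$), I would rewrite
\[
P(v,u)=(v-u)\int_u^v \eta'(\xi)\bigl(f'(\xi)-s\bigr)\,{\rm d}\xi,
\]
so it suffices to show that $(v-u)$ and this integral carry the same sign. Working with $v>u$ (the case $v<u$ being symmetric upon exchanging the roles of $u$ and $v$), the identity $\int_u^v f'(\xi)\,{\rm d}\xi=f(v)-f(u)=s(v-u)$ together with the monotonicity of $f'$ yields a point $\zeta\in[u,v]$ with $f'(\zeta{-0})\le s\le f'(\zeta{+0})$. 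Fix $d(\zeta)\in[\eta'(\zeta{-0}),\eta'(\zeta{+0})]$.

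The key observation is that, by the nondecreasing property of both $f'$ and $\eta'$, on $[u,\zeta)$ both factors $\eta'(\xi)-d(\zeta)$ and $f'(\xi)-s$ are non-positive, while on $(\zeta,v]$ both are non-negative; hence $(\eta'(\xi)-d(\zeta))(f'(\xi)-s)\ge 0$ throughout $[u,v]$. Integrating then gives
\[
\int_u^v \eta'(\xi)\bigl(f'(\xi)-s\bigr)\,{\rm d}\xi\,\ge\, d(\zeta)\int_u^v\bigl(f'(\xi)-s\bigr)\,{\rm d}\xi\,=\,0,
\]
and therefore $P(v,u)\ge 0$, proving \eqref{degenIcnonlinear}.

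For the strict statement under the strict convexity of $\eta$, the ``$\Leftarrow$'' direction is immediate from Proposition \ref{ProCon}(ii): if $v\in I^+(u)\cup I^-(u)$ with $v\neq u$, then $f$ is affine on the closed interval joining $u$ and $v$ with slope $s=f'(u{\pm 0})$, so $f'(\xi)\equiv s$ almost everywhere on that interval and the integral vanishes. Conversely, if $P(v,u)=0$ with $v\neq u$, equality throughout the chain above forces $(\eta'(\xi)-d(\zeta))(f'(\xi)-s)\equiv 0$ a.e.\ on the interval joining $u$ and $v$. The strict monotonicity of $\eta'$ guaranteed by \eqref{convexg} makes $\eta'(\xi)=d(\zeta)$ hold at most at the single point $\xi=\zeta$, so $f'(\xi)=s$ almost everywhere, meaning $f$ is affine on that interval with slope $s$; by definition this places $v$ in $I^+(u)\cup I^-(u)$.

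The main subtlety to watch will be the converse in the case $v<u$ or when the affine segment of $f$ around $u$ extends to both sides: one must verify that the slope $s$ coincides with $f'(u{-0})$ (giving $v\in I^-(u)$), and that in the borderline subcase where $f'$ is continuous at $u$, $v$ lies in $I^+(u)\cap I^-(u)$. Both alternatives are consistent with the dichotomy \eqref{degenIO} and follow immediately from the characterization in Proposition \ref{ProCon}(ii).
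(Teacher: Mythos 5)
Your proposal is correct and follows essentially the same route as the paper: nonnegativity comes from rewriting $P(v,u)=(v-u)\int_u^v\eta'(\xi)\big(f'(\xi)-s\big)\,{\rm d}\xi$ and repeating the monotonicity comparison of \eqref{calP} with $(u,\langle u\rangle)$ replaced by $(v,u)$, while the vanishing of $P$ for $v\in I^\pm(u)$ uses Proposition \ref{ProCon}(ii) exactly as in the paper. The only (minor) difference is that you obtain strict positivity off $I^-(u)\cup I^+(u)$ by the equality-case/contrapositive argument ($P(v,u)=0$ forces $f'\equiv s$ a.e.\ between $u$ and $v$ by the strict increase of $\eta'$ in \eqref{convexg}, hence $v\in I^\pm(u)$), whereas the paper argues directly from \eqref{degenIp} that $f'(v)\neq f'(u\pm 0)$ makes the integral strictly positive; both hinge on the same ingredients, and your version is sound.
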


\begin{proof}
Inequality $\eqref{degenIcnonlinear}$ follows by replacing $u$ and $\langle u \rangle$ in $\eqref{calP}$ by $v$ and $u$, respectively.

We now assume that $\eta(u)$ is strictly convex as in $\eqref{convexg}$.
If $v\in I^{\pm}(u)$, it follows from $\eqref{degenI}$ that
 $s=\frac{f(u)-f(v)}{u-v}=f'(u\pm0)$ and from $\eqref{degenIp}$
 that $f'(\xi)=f'(u\pm 0)=s$ for any $\xi$ lying between $u$ and $v$.
 Then
$$
P(v,u)=(v-u)\int^v_u\eta'(\xi)\big(f'(\xi)-s\big)\,{\rm d}\xi=0.
$$

For $v\notin I^-(u)\cup I^+(u)$, by $\eqref{degenIp}$,
$\pm(f'(v)-f'(u\pm 0))>0$ if $\pm(v-u)>0$,
which implies that $P(v,u)>0$ by using $\eqref{calP}$ with strictly increasing $\eta'(u)$.
\end{proof}

\begin{remark}\label{conshock}
Along a discontinuity $\mathcal{S}$ of a weak solution $u=u(t,x)$ of $\eqref{equationu}$ with the left and right traces $u^-:=u(t,x-0)$ and $u^+:=u(t,x+0)$, respectively,
$$
s[u]-[f(u)]:=s(u^+-u^-)-\big(f(u^+)-f(u^-)\big)=0.
$$
By
Corollary {\rm $\ref{Oleinikentropy}$}, if $u^-\in I^-(u^+)\cup I^+(u^+)$, then $s\big(\eta(u^+)-\eta(u^-)\big)-\big(q(u^+)-q(u^-)\big)=0$ so that the discontinuity $\mathcal{S}$ is a contact discontinuity. If $u^-\notin I^-(u^+)\cup I^+(u^+)$, then
\begin{equation}\label{equvilance}
s\big(\eta(u^+)-\eta(u^-)\big)-\big(q(u^+)-q(u^-)\big)> 0
\end{equation}
if and only if
\begin{equation}\label{equivalence-2}
f'(u^-{-0})>f'(u^+{+0}),
\end{equation}
which is exactly the Lax entropy condition {\rm (see \cite{[Lax]})}.
\end{remark}

\begin{remark}\label{generalconvex}
Since $I^\pm(u)$ is a single point set or an interval,
for the case that $\pm\infty\in I^\pm(u)$,
the equality{\rm :}
$$
f(\pm\infty)-f(u)-f'(u{\pm 0})(\pm\infty-u)=0
$$
in the definition of $I^\pm(u)$ in $\eqref{degenI}$ is well-defined by regarding it
as the limit of $f(v)-f(u)-f'(u{\pm 0})(v-u)\equiv 0$
for $v\in I^\pm(u)\rightarrow \pm\infty$.
\end{remark}

\section{Proof of Theorem $\ref{Theinfty}$}
 In Theorem $\ref{Theinfty}$, since $u\in L^\infty_{\rm loc}$ is assumed,
 then $w$ is a local Lipschitz function.
 We now prove that $w$ is both a viscosity subsolution and supersolution.
 Therefore, by Definition $\ref{viscositys}$, $w$ is a viscosity solution.

\subsection{Viscosity subsolution}
By the definition of $w$ in Theorem $\ref{Theinfty}$,
\begin{equation}\label{equationht}
w_t=-f(u)=-f(w_x) \qquad \mbox{{\it a.e.} in $\Omega$}.
\end{equation}
Let $\zeta\in C^\infty_{\rm c}({\Bbb R}^+\times{\Bbb R})$ be non-negative and satisfy
$\int_{{\Bbb R}^+\times{\Bbb R}}\zeta(t,x)\,{\rm d}t{\rm d}x=1$, and set
$$
\zeta_\varepsilon(t,x)=\frac{1}{\varepsilon^2}\zeta(\frac{t}{\varepsilon},\frac{x}{\varepsilon}).
$$
Since $f(u)$ is convex, by the Jensen inequality, we see that
$$
0=\big(w_t+f(w_x)\big)*\zeta_\varepsilon\geq w_t* \zeta_\varepsilon+f(w_x*\zeta_\varepsilon)=(w*\zeta_\varepsilon)_t+f((w*\zeta_\varepsilon)_x).
$$
Hence, $w*\zeta_\varepsilon$ is a classical subsolution so that it is also a viscosity
subsolution (see Crandall-Lions \cite[Corollary I.6]{[CL]}).
Since $w$ is continuous, $w*\zeta_\varepsilon$ converges, locally uniform,
to $w$ as $\varepsilon$ tends to $0$.
Thus, $w$ is also a viscosity subsolution, by the stability result
in Crandall-Lions \cite[Theorem I.2]{[CL]}.

\subsection{Viscosity supersolution}
We divide the proof into six steps.

\smallskip
1. To prove that $w$ is a viscosity supersolution,
we need to show that, if $\varphi$ is a $C^1$--function such that $w-\varphi$
has a minimum at some point $(t,x)\in \Omega$, then $\big(\varphi_t+f(\varphi_x)\big)(t,x)\geq 0$.

Without loss of generality, we may assume that  $(t,x)=(0,0)$ and $(w-\varphi)(0,0)=0$.
Then it suffices to show
\begin{equation}\label{equationxi}
\big(\varphi_t+f(\varphi_x)\big)(0,0)=0.
\end{equation}

\smallskip
2. To simplify the notation, we use $p(\varepsilon,\delta)\lesssim k(\varepsilon,\delta)$
to denote that there exist a large constant $C>0$ and a small constant $c>0$ such that
\begin{equation}\label{deflesssim}
p(\varepsilon,\delta)\leq C k(\varepsilon,\delta)\qquad  \mbox{for $|(\varepsilon, \delta)|\leq c$}.
\end{equation}

Let $\varphi_\varepsilon=\varphi-\varepsilon|(t,x)|$ with $\varepsilon\in(0,1]$.
Then $w-\varphi_\varepsilon$ has a strict minimum at $(0,0)$.
We define $\Omega_{\varepsilon,\delta}$ for $\delta >0$ as
\begin{equation}\label{defOmega}
\Omega_{\varepsilon,\delta}:=\big\{(t,x)\,:\, (w-\varphi_\varepsilon)(t,x)<\delta\big\}.
\end{equation}
Since $w$ is continuous and $w-\varphi$
has a strict minimum at $(0,0)$,  we see that $\Omega_{\varepsilon,\delta}$ is an open set
and, for any $(t,x)\in \Omega_{\varepsilon,\delta}$,
$$
(w-\varphi_\varepsilon)(t,x)
=(w-\varphi)(t,x)+\varepsilon|(t,x)|\geq(w-\varphi)(0,0)+\varepsilon|(t,x)|=\varepsilon|(t,x)|,
$$
which infers from $\eqref{defOmega}$ that
\begin{equation}\label{OmegaB}
\Omega_{\varepsilon,\delta}\subset\big\{(t,x)\,:\,\varepsilon|(t,x)|<\delta\big\}
=B_{r}(0,0)\qquad\,\, \mbox{for $r:=\frac{\delta}{\varepsilon}$}.
\end{equation}

Denote $\langle\, \cdot\,\rangle_{\varepsilon,\delta}$ by
\begin{equation}\label{integralu}
\langle u\rangle_{\varepsilon,\delta}
:=\def\avint{\mathop{\,\rlap{--}\!\!\int}\nolimits} \avint_{\Omega_{\varepsilon,\delta}} u(t,x)\,{\rm d}t{\rm d}x
=\frac{1}{|\Omega_{\varepsilon,\delta}|}\int_{\Omega_{\varepsilon,\delta}}u(t,x)\,{\rm d}t{\rm d}x.
\end{equation}
Then we observe that
\begin{equation}\label{xifu}
\langle ((\varphi_{\varepsilon})_t, (\varphi_{\varepsilon})_x)\rangle_{\varepsilon,\delta}
=\langle (-f(u), u)\rangle_{\varepsilon,\delta}.
\end{equation}
Indeed, this can be seen from the definition of $w$ in Theorem $\ref{Theinfty}$ that
\begin{equation}\label{xifuh}
\left\langle (-f(u),u)\right\rangle_{\varepsilon,\delta}-\left\langle ((\varphi_{\varepsilon})_t, (\varphi_{\varepsilon})_x)\right\rangle_{\varepsilon,\delta}
=\left \langle \big((w-\varphi_\varepsilon)_t,(w-\varphi_\varepsilon)_x\big)
\right\rangle_{\varepsilon,\delta}.
\end{equation}
From \eqref{defOmega}--\eqref{OmegaB},
$\min\{w-\varphi_\varepsilon-\delta,0\}$ is continuous with compact support so that
\begin{equation}\label{hxi}
\left \langle (w-\varphi_\varepsilon)_t\right\rangle_{\varepsilon,\delta}
=\frac{1}{|\Omega_{\varepsilon,\delta}|}\int_{\Omega_{\varepsilon,\delta}}(w-\varphi_\varepsilon)_t\,{\rm d}t{\rm d}x
=\frac{1}{|\Omega_{\varepsilon,\delta} |}\int_{{\Bbb R}^2}\big(\min\{w-\varphi_\varepsilon-\delta,0\}\big)_t\,{\rm d}t{\rm d}x
=0.
\end{equation}
Similarly, we obtain that $\langle (w-\varphi_\varepsilon)_x\rangle_{\varepsilon,\delta}=0$.
This infers that $\eqref{xifu}$ holds.

\medskip
3. We now prove $\eqref{equationxi}$.
To achieve this, we will first show in Step 4 below that
\begin{equation}\label{caletafg0}
0\leq \langle \eta(u)-\eta(\langle u\rangle_{\varepsilon,\delta})\rangle_{\varepsilon,\delta}\langle f(u)-f(\langle
u\rangle_{\varepsilon,\delta})\rangle_{\varepsilon,\delta} \leq \mathcal{B}_{\varepsilon,\delta}(f,\eta)\lesssim \varrho(\varepsilon,\delta),
\end{equation}
where $\mathcal{B}_{\varepsilon,\delta}(f,\eta)$ is defined
via replacing $\langle\,\cdot\,\rangle$ in $\eqref{defB}$ by
$\langle\,\cdot\,\rangle_{\varepsilon,\delta}$ in $\eqref{integralu}$,
and $\varrho(\varepsilon,\delta)$ is given by
\begin{equation}\label{calBpg0}
\varrho(\varepsilon,\delta):=\frac{1}{\delta}\mu(B_{\frac{\delta}{\varepsilon}}(0,0))+\frac{\delta}{\varepsilon}+\varepsilon.
\end{equation}
From $\eqref{OmegaB}$, $\delta=r\varepsilon$. Then $\eqref{calBpg0}$ implies
\begin{equation}\label{calBpg0k}
\lim_{\varepsilon\rightarrow 0} \lim_{r_k\rightarrow 0}\varrho(\varepsilon,r_k\varepsilon)=\lim_{\varepsilon\rightarrow 0} \lim_{r_k\rightarrow 0}\Big(\frac{1}{r_k}\mu(B_{r_k}(0,0))\varepsilon^{-1}+r_k+\varepsilon\Big)=0,
\end{equation}
where $\{r_k\}$ is a subsequence of $r$ in $\eqref{mu0}$ such that
\begin{equation}\label{mu0k}
\lim\limits_{r_k \downarrow 0}\frac{1}{r_k}\mu(B_{r_k}(0,0))=0.
\end{equation}

Then, with $\eqref{caletafg0}$--$\eqref{mu0k}$, we will show in Step 5 for the case that $f'(u)$ is strictly increasing and in Step 6 for the case that $f'(u)$ is nondecreasing below that
\begin{equation}\label{calff00}
\lim_{\varepsilon\rightarrow 0} \lim_{r_k\rightarrow 0}|\langle f(u)-f(\langle u\rangle_{\varepsilon,r_k\varepsilon})\rangle_{\varepsilon,r_k\varepsilon}|=0.
\end{equation}

Combining  $\eqref{OmegaB}$ and $\eqref{xifu}$ with $\eqref{caletafg0}$--$\eqref{calff00}$ together, we have
\begin{align}\label{calxiinfty}
\big|\big(\varphi_t+f(\varphi_x)\big)(0,0)\big|&\lesssim\big|\langle \varphi_t\rangle_{\varepsilon,r_k\varepsilon}+ f(\langle\varphi_x\rangle_{\varepsilon,r_k\varepsilon})\big|+r_k\nonumber\\[1mm]
&\lesssim\big|\langle (\varphi_{\varepsilon})_t\rangle_{\varepsilon,r_k\varepsilon}
  + f(\langle(\varphi_{\varepsilon})_x\rangle_{\varepsilon,r_k\varepsilon})\big|+\varepsilon+r_k\nonumber\\[1mm]
&=\big|-\langle f(u)\rangle_{\varepsilon,r_k\varepsilon}+ f(\langle u\rangle_{\varepsilon,r_k\varepsilon})\big|+\varepsilon+r_k\nonumber\\[1mm]
&=\big|\langle f(u)-f(\langle u\rangle_{\varepsilon,r_k\varepsilon})\rangle_{\varepsilon,r_k\varepsilon}\big|+\varepsilon+r_k.
\end{align}
Using $\eqref{calff00}$,
by letting first $r_k$ and then $\varepsilon$ go to $0$ in $\eqref{calxiinfty}$, we conclude $\eqref{equationxi}$ as desired, which means that $w$ is a viscosity supersolution.

\medskip
4. We now prove $\eqref{caletafg0}$ with $\varrho(\varepsilon,\delta)$ given by $\eqref{calBpg0}$. We first notice that
\begin{equation}\label{deltaomega}
\delta^2 \lesssim |\Omega_{\varepsilon,\delta}| \qquad\mbox{for\ small $\delta>0$}.
\end{equation}
Since $u\in L^\infty_{\rm loc}$, and $\varphi$ is smooth,
we use $\eqref{OmegaB}$ and $\eqref{xifu}$ to see that
\begin{equation}\label{ufuxi}
\langle |u|\rangle_{\varepsilon,\delta}\lesssim 1,\quad
\langle|f(u)|\rangle_{\varepsilon,\delta}\lesssim 1, \qquad
\langle|(\varphi_{\varepsilon})_t|\rangle_{\varepsilon,\delta}\lesssim 1, \quad
\langle|(\varphi_{\varepsilon})_x|\rangle_{\varepsilon,\delta}\lesssim 1.
\end{equation}
It follows from the definition of $w$ in Theorem $\ref{Theinfty}$ that
\begin{equation}\label{hthx}
\langle|w_t|\rangle_{\varepsilon,\delta}\lesssim 1, \qquad
\langle|w_x|\rangle_{\varepsilon,\delta}\lesssim 1.
\end{equation}
Then, from $\eqref{ufuxi}$--$\eqref{hthx}$, we obtain
\begin{equation}\label{hxithxix}
\textstyle\left\langle \left| \big((w-\varphi_\varepsilon)_t,(w-\varphi_\varepsilon)_x\big)\right| \right\rangle_{\varepsilon,\delta}
\leq \left\langle \left| (w_t,w_x) \right|\right\rangle_{\varepsilon,\delta}
+ \left\langle \left|((\varphi_{\varepsilon})_t, (\varphi_{\varepsilon})_x)
\right|\right\rangle_{\varepsilon,\delta} \lesssim 1.
\end{equation}

From $\eqref{hxithxix}$, we have
\begin{align}\label{minhxidelta}
&\int_{{\Bbb R}^2} \left| \big((\min\{w-\varphi_\varepsilon -\delta,0\})_t,(\min\{w-\varphi_\varepsilon -\delta,0\})_x \big)\right|\,{\rm d}t{\rm d}x\nonumber\\[1mm]
&=\int_{\Omega_{\varepsilon,\delta}}\left|\big((w-\varphi_\varepsilon)_t ,(w-\varphi_\varepsilon)_x\big)
\right|\,{\rm d}t{\rm d}x
\lesssim |\Omega_{\varepsilon,\delta}|.
\end{align}
Then, by the Sobolev inequality,
\begin{equation}\label{minhxideltaomega}
\Big(\int_{{\Bbb R}^2} (\min\{w-\varphi_\varepsilon-\delta,0\})^2\,{\rm d}t{\rm d}x\Big)^{\frac{1}{2}}
\lesssim |\Omega_{\varepsilon,\delta}|,
\end{equation}
which, due to the H\"{o}lder inequality, implies
\begin{equation}\label{minhxideltaomega32}
 -\int_{{\Bbb R}^2} \min\{w-\varphi_\varepsilon -\delta,0\}\,{\rm d}t{\rm d}x
\leq
|\Omega_{\varepsilon,\delta}|^{\frac{1}{2}}
\Big(\int_{{\Bbb R}^2}(\min\{w-\varphi_\varepsilon -\delta,0\})^2\,{\rm d}t{\rm d}x\Big)^{\frac{1}{2}}
\lesssim|\Omega_{\varepsilon,\delta}|^{\frac{3}{2}}.
\end{equation}

For $\delta>0$, we denote
\begin{equation}\label{Idelta}
I(\delta):=-\int_{{\Bbb R}^2}\min\{w-\varphi_\varepsilon-\delta,0\}\,{\rm d}t{\rm d}x
=\int^\delta_0 |\Omega_{\varepsilon,\sigma}\,|{\rm d}\sigma.
\end{equation}
Then, from $\eqref{minhxideltaomega32}$, we obtain the following differential inequality:
\begin{equation}\label{Ideltainequality}
I(\delta)\lesssim |\Omega_{\varepsilon,\delta}|^{\frac{3}{2}}
=\Big(\frac{{\rm d}}{{\rm d}\delta}I(\delta)\Big)^{\frac{3}{2}}.
\end{equation}
Since $I(\delta)>0$ for $\delta>0$, it follows from $\eqref{Ideltainequality}$
that
$
1\lesssim\frac{{\rm d}}{{\rm d}\delta}\big(I(\delta)^{\frac{1}{3}}\big),
$
which infers that
$\delta^3\lesssim I(\delta).$
Noticing that $|\Omega_{\varepsilon,\delta}|$ is a non-decreasing function of $\delta$, we have
$$
\delta^2\lesssim \frac{1}{\delta}I(\delta)=\frac{1}{\delta}\int^\delta_0|\Omega_{\varepsilon,\sigma}|\,{\rm d}\sigma\leq|\Omega_{\varepsilon,\delta}|.
$$

We now estimate $\mathcal{B}_{\varepsilon,\delta}(f,\eta)$ in $\eqref{caletafg0}$.
In fact, from $\eqref{defB}$ and $\eqref{xifu}$--$\eqref{xifuh}$, we have
\begin{align}\label{calBinfty}
\mathcal{B}_{\varepsilon,\delta}(f,\eta)&=\left\langle(-f(u),u) \cdot(\eta(u),q(u))\right\rangle_{\varepsilon,\delta}-\left\langle
(-f(u),u) \right\rangle_{\varepsilon,\delta}\cdot\left\langle(\eta(u),q(u))\right\rangle_{\varepsilon,\delta}\nonumber\\[1mm]
&=\left\langle (-f(u),u)\cdot (\eta(u),q(u))\right\rangle_{\varepsilon,\delta}
-\left\langle((\varphi_{\varepsilon})_t, (\varphi_{\varepsilon})_x)
\cdot(\eta(u),q(u)) \right\rangle_{\varepsilon,\delta}\nonumber\\[1mm]
&\quad\,+\left\langle ((\varphi_{\varepsilon})_t, (\varphi_{\varepsilon})_x)
\cdot(\eta(u),q(u)) \right\rangle_{\varepsilon,\delta}
-\left\langle ((\varphi_{\varepsilon})_t, (\varphi_{\varepsilon})_x)\right\rangle_{\varepsilon,\delta}
\cdot
\left\langle (\eta(u),q(u))\right\rangle_{\varepsilon,\delta}\nonumber\\[1mm]
&\leq\left\langle\big((w-\varphi_\varepsilon)_t ,(w-\varphi_\varepsilon)_x \big)
\cdot(\eta(u),q(u))\right\rangle_{\varepsilon,\delta}\nonumber\\[1mm]
&\quad\,+\sup_{\Omega_{\varepsilon,\delta}}\big|((\varphi_{\varepsilon})_t,(\varphi_{\varepsilon})_x)
  -\left\langle((\varphi_{\varepsilon})_t, (\varphi_{\varepsilon})_x)\right\rangle_{\varepsilon,\delta}\big|\left\langle \left|
(\eta(u),q(u))\right|\right\rangle_{\varepsilon,\delta}\nonumber\\[1mm]
&=:I_1(\varepsilon,\delta)+I_2(\varepsilon,\delta).
\end{align}
For $I_1(\varepsilon,\delta)$,
it follows from $\eqref{etamu2}$, $\eqref{OmegaB}$, and $\eqref{deltaomega}$ that
\begin{align}\label{calIinfty}
I_1(\varepsilon,\delta)&=\frac{1}{|\Omega_{\varepsilon,\delta}|}\int_{\Omega_{\varepsilon,\delta}}
\big((w-\varphi_\varepsilon )_t ,(w- \varphi_\varepsilon)_ x\big)
\cdot(\eta(u),q(u))\,{\rm d}t{\rm d}x\nonumber\\
&=\frac{1}{|\Omega_{\varepsilon,\delta}|}\int_{{\Bbb R}^2}\big((\min\{w-\varphi_\varepsilon -\delta,0\})_t ,(\min\{w- \varphi_\varepsilon -\delta,0\})_ x\big)
\cdot(\eta(u),q(u))\,{\rm d}t{\rm d}x\nonumber\\
&=\frac{1}{|\Omega_{\varepsilon,\delta}|}\int_{{\Bbb R}^2} \big({-}\min\{w-\varphi_\varepsilon -\delta,0\}\big) \big(\eta(u)_t+q(u)_x\big)\,{\rm d}t{\rm d}x\nonumber\\
&\leq \frac{1}{|\Omega_{\varepsilon,\delta}|}\int_{{\Bbb R}^2} \max\{\delta-(w-\varphi_\varepsilon),0\}\,{\rm d}\mu\nonumber\\
&\leq \frac{\delta}{|\Omega_{\varepsilon,\delta}|}\mu(\Omega_{\varepsilon,\delta})\leq \frac{\delta}{|\Omega_{\varepsilon,\delta}|}\mu(B_{\frac{\delta}{\varepsilon}}(0,0))
\lesssim\frac{1}{\delta}\mu(B_{\frac{\delta}{\varepsilon}}(0,0)).
\end{align}
For $I_2(\varepsilon,\delta)$, it follows from $u\in L^\infty_{\rm loc}$
that $\langle|\eta(u)|\rangle_{\varepsilon,\delta}\lesssim 1$
and $\langle|q(u)|\rangle_{\varepsilon,\delta}\lesssim 1$.
Then, according to the definition of $I_2(\varepsilon,\delta)$ in $\eqref{calBinfty}$, we have
\begin{align}\label{calIIinfty}
I_2(\varepsilon,\delta)
&\textstyle\lesssim \sup\limits_{\Omega_{\varepsilon,\delta}}\big|((\varphi_{\varepsilon})_t, (\varphi_{\varepsilon})_x)
 -\left\langle ((\varphi_{\varepsilon})_t, (\varphi_{\varepsilon})_x)\right\rangle_{\varepsilon,\delta}\big|\nonumber\\
&\textstyle\leq \mathop{\rm osc}\limits_{\Omega_{\varepsilon,\delta}}((\varphi_{\varepsilon})_t, (\varphi_{\varepsilon})_x)
\leq \mathop{\rm osc}\limits_{\Omega_{\varepsilon,\delta}}(\varphi_t , \varphi_x)+2\varepsilon
\displaystyle\lesssim\frac{\delta}{\varepsilon}+\varepsilon.
\end{align}

According to $\eqref{calBinfty}$--$\eqref{calIIinfty}$, we have
\begin{equation}\label{caluinfty}
\mathcal{B}_{\varepsilon,\delta}(f,\eta)
\lesssim\frac{1}{\delta}\mu(B_{\frac{\delta}{\varepsilon}}(0,0))+\frac{\delta}{\varepsilon}+\varepsilon,
\end{equation}
which, by combining with $\eqref{convexB}$, implies $\eqref{caletafg0}$ with $\varrho(\varepsilon,\delta)$
given by $\eqref{calBpg0}$.

\medskip
5. Now we first prove $\eqref{calff00}$ for the case that $f'(u)$ is strictly increasing.
According to Proposition \ref{ProB} and the convexity of $f(u)$ and $\eta(u)$,
if $\bar{f}(u)$ and $\bar{\eta}(u)$ are chosen as
\begin{equation}\label{barfeta}
\begin{cases}
\bar{f}(u):=f(u)-f(\langle u\rangle_{\varepsilon,\delta})
 -f'(\langle u\rangle_{\varepsilon,\delta}{+0})\big(u-\langle u\rangle_{\varepsilon,\delta}\big)\geq 0,\\[1mm]
\bar{\eta}(u):=\eta(u)-\eta(\langle u\rangle_{\varepsilon,\delta})
-\eta'(\langle u\rangle_{\varepsilon,\delta}{+0})\big(u-\langle u\rangle_{\varepsilon,\delta}\big)\geq 0,
\end{cases}
\end{equation}
then, from $\eqref{linearB}$--$\eqref{convexB}$ and $\eqref{caletafg0}$, we have
\begin{equation}\label{convexBuniform0}
0\leq\langle \bar{\eta}(u) \rangle_{\varepsilon,\delta} \langle \bar{f}(u) \rangle_{\varepsilon,\delta}
\leq \mathcal{B}_{\varepsilon,\delta}(\bar{f},\bar{\eta})=\mathcal{B}_{\varepsilon,\delta}(f,\eta)\lesssim \varrho(\varepsilon,\delta).
\end{equation}

On the other hand, notice that $\Omega_{\varepsilon,\delta}\subset B_{\frac{\delta}{\varepsilon}}(0,0)$
is uniformly bounded in $(\varepsilon,\delta)$ with $\delta\leq\varepsilon$.
Since $f(u)\in {\rm Lip}_{\rm loc}({\Bbb R})$  and $u\in L^\infty_{\rm loc}$,
then, for $\delta\leq\varepsilon$, there exists a constant $C>0$
independent of $\varepsilon$ and $\delta$ such that
\begin{equation}\label{calfuinfty}
0\leq\langle \bar{f}(u)\rangle_{\varepsilon,\delta}=\langle f(u)-f(\langle u\rangle_{\varepsilon,\delta})\rangle_{\varepsilon,\delta}
\leq C\left\langle |u-\langle u\rangle_{\varepsilon,\delta}|\right\rangle_{\varepsilon,\delta}.
\end{equation}
This means that, in order to prove $\eqref{calff00}$, it suffices to show
\begin{equation}\label{caluu00}
\lim_{\varepsilon\rightarrow 0} \lim_{r_k\rightarrow 0}\,
\langle |u-\langle u\rangle_{\varepsilon,r_k\varepsilon}|\rangle_{\varepsilon,r_k\varepsilon}=0.
\end{equation}
In fact, for any fixed $\sigma>0$, we have
\begin{align}\label{calulessigma}
&\frac{1}{|\Omega_{\varepsilon,r_k\varepsilon}|}\int_{\Omega_{\varepsilon,r_k\varepsilon}\bigcap\{|u-\langle u\rangle_{\varepsilon,r_k\varepsilon}|\leq\sigma\}}|u-\langle u\rangle_{\varepsilon,r_k\varepsilon}|\,{\rm d}t{\rm d}x\nonumber\\
&\leq\frac{1}{|\Omega_{\varepsilon,r_k\varepsilon}|}\int_{\Omega_{\varepsilon,r_k\varepsilon}\bigcap\{|u-\langle u\rangle_{\varepsilon,r_k\varepsilon}|\leq\sigma\}}\sigma\,{\rm d}t{\rm d}x\leq\sigma.
\end{align}
Thus, if we can prove that, for any fixed $\sigma>0$,
\begin{equation}\label{calulimit}
\mathop{\overline{\lim}}\limits_{\varepsilon\rightarrow 0}\mathop{\overline{\lim}}
\limits_{r_k\rightarrow 0}\frac{1}{|\Omega_{\varepsilon,r_k\varepsilon}|}\int_{\Omega_{\varepsilon,r_k\varepsilon}\bigcap\{|u-\langle u\rangle_{\varepsilon,r_k\varepsilon}|>\sigma\}}|u-\langle u\rangle_{\varepsilon,r_k\varepsilon}|\,{\rm d}t{\rm d}x=0,
\end{equation}
then it follows from
$\eqref{calulessigma}$--$\eqref{calulimit}$ that,
for any fixed $\sigma>0$,
$$
\mathop{\overline{\lim}}\limits_{\varepsilon\rightarrow 0} \mathop{\overline{\lim}}\limits_{r_k\rightarrow 0}\,\langle |u-\langle u\rangle_{\varepsilon,r_k\varepsilon}|\rangle_{\varepsilon,r_k\varepsilon}\leq \sigma,
$$
which yields $\eqref{caluu00}$ directly.

\smallskip
We now prove $\eqref{calulimit}$ by contradiction.
If
$\eqref{calulimit}$ does not hold, then there exist $\sigma_0>0$ and $c_0>0$ such that
\begin{equation}\label{calulimit0}
\mathop{\overline{\lim}}\limits_{\varepsilon\rightarrow 0}\mathop{\overline{\lim}}\limits_{r_k\rightarrow 0}\frac{1}{|\Omega_{\varepsilon,r_k\varepsilon}|}\int_{\Omega_{\varepsilon,r_k\varepsilon}\bigcap\{|u-\langle u\rangle_{\varepsilon,r_k\varepsilon}|>\sigma_0\}}\big|u-\langle u\rangle_{\varepsilon,r_k\varepsilon}\big|\,{\rm d}t{\rm d}x\geq 2c_0>0,
\end{equation}
which is equivalent to saying that there exists a subsequence $(\varepsilon_j,r_{k_i})$ such that,
for small $\varepsilon_j$ and $r_{k_i}$,
\begin{equation}\label{calulimit000}
\frac{1}{\big|\Omega_{\varepsilon_j,r_{k_i}\varepsilon_j}\big|}
\int_{\Omega_{\varepsilon_j,r_{k_i}\varepsilon_j}\bigcap
\{|u-\langle u\rangle_{\varepsilon_j,r_{k_i}\varepsilon_j}|>\sigma_0\}}
\big|u-\langle u\rangle_{\varepsilon_j,r_{k_i}\varepsilon_j}\big|\,{\rm d}t{\rm d}x\geq c_0>0.
\end{equation}

Since $f'(u)$ and $\eta'(u)$ are both strictly increasing, we have
\begin{equation}\label{genfeta}
\begin{cases}
(u-\langle u\rangle_{\varepsilon,\delta})\big(f'(u)-f'(\langle u\rangle_{\varepsilon,\delta}{+0})\big)>0 \qquad &{\rm for}\ u\neq \langle u\rangle_{\varepsilon,\delta},\\[1mm]
(u-\langle u\rangle_{\varepsilon,\delta})\big(\eta'(u)-\eta'(\langle u\rangle_{\varepsilon,\delta}{+0})\big)>0 \qquad &{\rm for}\ u\neq \langle u\rangle_{\varepsilon,\delta}.
\end{cases}
\end{equation}
Then, from the definition of $\bar{f}(u)$ in $\eqref{barfeta}$, for $|u-\langle u\rangle_{\varepsilon,\delta}|>\sigma_0$,
\begin{align}\label{ftalyorcc}
\bar{f}(u)&=f(u)-f(\langle u\rangle_{\varepsilon,\delta})
-f'(\langle u\rangle_{\varepsilon,\delta}{+0})(u-\langle u\rangle_{\varepsilon,\delta})\nonumber\\[1mm]
&=\int_0^1 \big|f'(\langle u\rangle_{\varepsilon,\delta}+\theta(u-\langle u\rangle_{\varepsilon,\delta}))-f'(\langle u\rangle_{\varepsilon,\delta}{+0})
\big|{\rm d}\theta\,|u-\langle u\rangle_{\varepsilon,\delta}|\nonumber\\[1mm]
&\geq\int_0^1 \big|f'(\langle u\rangle_{\varepsilon,\delta}\pm\theta\sigma_0)
-f'(\langle u\rangle_{\varepsilon,\delta}{+0})\big|{\rm d}\theta\,|u-\langle u\rangle_{\varepsilon,\delta}|.
\end{align}
Hence, from $\eqref{calulimit000}$, we have
\begin{align}\label{ftalyor0}
\langle \bar{f}(u)\rangle_{\varepsilon_j,r_{k_i}\varepsilon_j}
&\geq\frac{1}{|\Omega_{\varepsilon_j,r_{k_i}\varepsilon_j}|}\int_{\Omega_{\varepsilon_j,r_{k_i}\varepsilon_j}\bigcap\{|u-\langle u\rangle_{\varepsilon_j,r_{k_i}\varepsilon_j}|>\sigma_0\}}\bar{f}(u)\,{\rm d}t{\rm d}x\nonumber\\
&\geq\min\Big\{\int_0^1 \big|f'(\langle u\rangle_{\varepsilon_j,r_{k_i}\varepsilon_j}\pm\theta\sigma_0)
 -f'(\langle u\rangle_{\varepsilon_j,r_{k_i}\varepsilon_j}{+0})\big|{\rm d}\theta\Big\}\nonumber\\
&\qquad \times\frac{1}{|\Omega_{\varepsilon_j,r_{k_i}\varepsilon_j}|}\int_{\Omega_{\varepsilon_j,r_{k_i}\varepsilon_j}\bigcap\{|u-\langle u\rangle_{\varepsilon_j,r_{k_i}\varepsilon_j}|>\sigma_0\}}\big|u-\langle u\rangle_{\varepsilon_j,r_{k_i}\varepsilon_j}\big|\,{\rm d}t{\rm d}x\nonumber\\
&\geq c_0\min\Big\{\int_0^1 \big|f'(\langle u\rangle_{\varepsilon_j,r_{k_i}\varepsilon_j}\pm\theta\sigma_0)
-f'(\langle u\rangle_{\varepsilon_j,r_{k_i}\varepsilon_j}{+0})\big|{\rm d}\theta\Big\}\nonumber\\
&=:P_f(\langle u\rangle_{\varepsilon_j,r_{k_i}\varepsilon_j},\sigma_0,c_0).
\end{align}
Similarly, for $\bar{\eta}(u)$ as in $\eqref{barfeta}$,
we see that, for $|u-\langle u\rangle_{\varepsilon,\delta}|>\sigma_0$,
\begin{align}\label{etatalyorcc}
\bar{\eta}(u)&=\eta(u)-\eta(\langle u\rangle_{\varepsilon,\delta})
-\eta'(\langle u\rangle_{\varepsilon,\delta}{+0})(u-\langle u\rangle_{\varepsilon,\delta})\nonumber\\[1mm]
&=\int_0^1 \big|\eta'(\langle u\rangle_{\varepsilon,\delta}+\theta(u-\langle u\rangle_{\varepsilon,\delta}))
-\eta'(\langle u\rangle_{\varepsilon,\delta}{+0})\big|{\rm d}\theta\,|u-\langle u\rangle_{\varepsilon,\delta}|\nonumber\\[1mm]
&\geq\int_0^1 \big|\eta'(\langle u\rangle_{\varepsilon,\delta}\pm\theta\sigma_0)
-\eta'(\langle u\rangle_{\varepsilon,\delta}{+0})\big|{\rm d}\theta\,|u-\langle u\rangle_{\varepsilon,\delta}|,
\end{align}
so that
$\langle \bar{\eta}(u)\rangle_{\varepsilon_j,r_{k_i}\varepsilon_j}$ satisfies
\begin{align}\label{etatalyor0}
\langle \bar{\eta}(u)\rangle_{\varepsilon_j,r_{k_i}\varepsilon_j}&\geq c_0\min\Big\{\int_0^1 \big|\eta'(\langle u\rangle_{\varepsilon_j,r_{k_i}\varepsilon_j}\pm\theta\sigma_0)
 -\eta'(\langle u\rangle_{\varepsilon_j,r_{k_i}\varepsilon_j}{+0})\big|{\rm d}\theta\Big\}\nonumber\\[1mm]
&=:P_\eta(\langle u\rangle_{\varepsilon_j,r_{k_i}\varepsilon_j},\sigma_0,c_0).
\end{align}
Combining $\eqref{ftalyor0}$ with $\eqref{etatalyor0}$, we obtain
\begin{equation}\label{fetatalyor}
\langle \bar{\eta}(u)\rangle_{\varepsilon_j,r_{k_i}\varepsilon_j}\langle \bar{f}(u)\rangle_{\varepsilon_j,r_{k_i}\varepsilon_j}
\geq P_f(\langle u\rangle_{\varepsilon_j,r_{k_i}\varepsilon_j},\sigma_0,c_0)\, P_\eta(\langle u\rangle_{\varepsilon_j,r_{k_i}\varepsilon_j},\sigma_0,c_0).
\end{equation}

To pass the limits in $\eqref{fetatalyor}$, by $\eqref{ufuxi}$, we can choose a further subsequence (still denoted)
$(\varepsilon_j,r_{k_i})$ in $\eqref{calulimit000}$ such that
 $\lim_{\varepsilon_j\rightarrow 0} \lim_{r_{k_i}\rightarrow 0}\langle u\rangle_{\varepsilon_j,r_{k_i}\varepsilon_j}=:\bar{u}$ and the limits of $\langle \bar{\eta}(u)\rangle_{\varepsilon_j,r_{k_i}\varepsilon_j}$ and $\langle \bar{f}(u)\rangle_{\varepsilon_j,r_{k_i}\varepsilon_j}$ exist. Since $f'(u)$ and $\eta'(u)$ are both strictly increasing, it follows from
 $\eqref{ftalyorcc}$--$\eqref{fetatalyor}$ that
\begin{align}\label{fetatalyor1}
&\lim\limits_{\varepsilon_j\rightarrow 0} \lim\limits_{r_{k_i}\rightarrow 0}\,\langle \bar{\eta}(u)\rangle_{\varepsilon_j,r_{k_i}\varepsilon_j}\langle \bar{f}(u)\rangle_{\varepsilon_j,r_{k_i}\varepsilon_j}\nonumber\\[1mm]
&\geq P_f(\bar{u},\sigma_0,c_0)\, P_\eta(\bar{u},\sigma_0,c_0)\nonumber\\[1mm]
&=c^2_0\min\Big\{\int_0^1 \big|f'(\bar{u}\pm\theta\sigma_0)-f'(\bar{u}{+0})\big|{\rm d}\theta\Big\}\,\min\Big\{\int_0^1 \big|\eta'(\bar{u}\pm\theta\sigma_0)-\eta'(\bar{u}{+0})\big|{\rm d}\theta\Big\}\nonumber\\[1mm]
&\geq c^2_0\min\big\{\big|f'(\bar{u}\pm\theta_1\sigma_0)-f'(\bar{u}{+0})\big|\big\}\,\min\big\{\big|\eta'(\bar{u}\pm\theta_2\sigma_0)-\eta'(\bar{u}{+0})\big|\big\}\nonumber\\[1mm]
&>0
\end{align}
for some $\theta_1,\theta_2\in(0,1)$.

Restricting to subsequence $(\varepsilon_j, r_{k_i})$, it is clear
that $\eqref{fetatalyor1}$ contradicts to $\eqref{convexBuniform0}$
with $\eqref{calBpg0k}$.
We conclude that $\eqref{calulimit}$ holds, so does $\eqref{caluu00}$.
Up to now, we have proved $\eqref{calff00}$ for the case that $f'(u)$ is strictly increasing.

\smallskip
6. We now prove $\eqref{calff00}$ for the case that  $f'(u)$ is merely nondecreasing, $i.e.$,
\begin{equation}\label{concalff00}
\lim_{\varepsilon\rightarrow 0} \lim_{r_k\rightarrow 0}|\langle f(u)-f(\langle u\rangle_{\varepsilon,r_k\varepsilon})\rangle_{\varepsilon,r_k\varepsilon}|=0.
\end{equation}

We first define $\tilde{f}(u)$ and $\tilde{\eta}(u)$ as
\begin{equation}\label{conbarfeta}
\begin{cases}
\tilde{f}(u):=f(u)-f(\langle u\rangle_{\varepsilon,\delta})
  -f'(\langle u\rangle_{\varepsilon,\delta}{+0})\big(u-\langle u\rangle_{\varepsilon,\delta}\big)\geq 0,\\[1mm]
\tilde{\eta}(u):=\eta(u)-\eta(\langle u\rangle_{\varepsilon,\delta})
 -\eta'(\langle u\rangle_{\varepsilon,\delta}{+0})\big(u-\langle u\rangle_{\varepsilon,\delta}\big)\geq 0.
\end{cases}
\end{equation}
Then, from $\langle u-\langle u\rangle_{\varepsilon,\delta}\rangle_{\varepsilon,\delta}=0$, we have
\begin{align}\label{conbarfetaa}
&\langle f(u)-f(\langle u\rangle_{\varepsilon,\delta})\rangle_{\varepsilon,\delta}\nonumber\\[1mm]
&=\big\langle f(u)-f(\langle u\rangle_{\varepsilon,\delta})
  -f'(\langle u\rangle_{\varepsilon,\delta}{+0})
   (u-\langle u\rangle_{\varepsilon,\delta})\big\rangle_{\varepsilon,\delta}\nonumber\\[1mm]
&=\frac{1}{|\Omega_{\varepsilon,\delta}|}\Big(\int_{\Omega_{\varepsilon,\delta}\bigcap\{u<a^-_{\varepsilon,\delta}\}}
+\int_{\Omega_{\varepsilon,\delta}\bigcap\{a^-_{\varepsilon,\delta}\leq u\leq a^+_{\varepsilon,\delta}\}}
+\int_{\Omega_{\varepsilon,\delta}\bigcap\{u>a^+_{\varepsilon,\delta}\}}\Big)\tilde{f}(u)\,{\rm d}t{\rm d}x\nonumber\\[1mm]
&=:J_-(\varepsilon,\delta)+J_0(\varepsilon,\delta)+J_+(\varepsilon,\delta),
\end{align}
where $a^\pm_{\varepsilon,\delta}$,
by applying Proposition $\ref{ProCon}$ to $u=\langle u\rangle_{\varepsilon,\delta}$,
are determined by
\begin{equation}\label{condegenI}
\big[a^-_{\varepsilon,\delta},a^+_{\varepsilon,\delta}\big]:=I^+(\langle u\rangle_{\varepsilon,\delta})=\big\{v\in \bar{{\Bbb R}}\,:\, f(v)-f(\langle u\rangle_{\varepsilon,\delta})-f'(\langle u\rangle_{\varepsilon,\delta}{+0})(v-\langle u\rangle_{\varepsilon,\delta})=0\big\}.
\end{equation}
Furthermore, by $a^\pm_{\varepsilon,\delta}\in I^+(\langle u\rangle_{\varepsilon,\delta})$,
we have
\begin{equation}\label{condegenI1}
\begin{cases}
f(a^\pm_{\varepsilon,\delta})-f(\langle u\rangle_{\varepsilon,\delta})
-f'(\langle u\rangle_{\varepsilon,\delta}{+0})(a^\pm_{\varepsilon,\delta}-\langle u\rangle_{\varepsilon,\delta})=0,
\\[1mm]
f'(a^-_{\varepsilon,\delta}{+0})
 =f'(a^+_{\varepsilon,\delta}{-0})=f'(\langle u\rangle_{\varepsilon,\delta}{+0}).
\end{cases}
\end{equation}

We prove $\eqref{concalff00}$ by passing the limits
of $J_0(\varepsilon,\delta)$ and $J_\pm(\varepsilon,\delta)$, respectively.
For $J_0(\varepsilon,\delta)$, by $\eqref{conbarfeta}$ and $\eqref{condegenI}$,
$\tilde{f}(u)\equiv 0$ on $I^+(\langle u\rangle_{\varepsilon,\delta})$
so that $J_0(\varepsilon,\delta)\equiv 0$ for any $(\varepsilon,\delta)$.

For $J_\pm(\varepsilon,\delta)$, if $a^-_{\varepsilon,\delta}=-\infty$,
then $\Omega_{\varepsilon,\delta}\bigcap\{u<a^-_{\varepsilon,\delta}\}
=\emptyset$ so that $J_-(\varepsilon,\delta)=0$.
If $a^+_{\varepsilon,\delta}=\infty$,
$\Omega_{\varepsilon,\delta}\bigcap\{u>a^+_{\varepsilon,\delta}\}=\emptyset$
so that $J_+(\varepsilon,\delta)=0$.
Thus, we can restrict sequence $(\varepsilon,\delta)$ such that
$a^-_{\varepsilon,\delta}>-\infty$ and $a^+_{\varepsilon,\delta}<\infty$.
Furthermore, by $\eqref{OmegaB}$, $\Omega_{\varepsilon,\delta}\subset B_{\frac{\delta}{\varepsilon}}(0,0)$
is uniformly bounded in $(\varepsilon,\delta)$ with $\delta\leq\varepsilon$ so that,
from $u\in L^\infty_{\rm loc}$ and $f(u)\in {\rm Lip}_{\rm loc}({\Bbb R})$,
$\eqref{conbarfeta}$
implies that the function sequence $\{\tilde{f}(u)\}_{\varepsilon,\delta}$
is uniformly bounded in $(\varepsilon,\delta)$ with $\delta\leq\varepsilon$.
If there exists a subsequence of $\{a^-_{\varepsilon,\delta}\}$ ($resp.,$ $\{a^+_{\varepsilon,\delta}\}$)
tending to $-\infty$ ($resp.,$ $\infty$), by restricting to this subsequence,
$\Omega_{\varepsilon,\delta}\bigcap\{u<a^-_{\varepsilon,\delta}\}$
($resp.,$ $\Omega_{\varepsilon,\delta}\bigcap\{u>a^+_{\varepsilon,\delta}\}$) tends
to the empty set. Then, by $\eqref{conbarfetaa}$, $J_-(\varepsilon,\delta)$
($resp.,$ $J_+(\varepsilon,\delta)$) tends to $0$.
Thus, we can further restrict sequences $\{a^-_{\varepsilon,\delta}\}$
and $\{a^+_{\varepsilon,\delta}\}$ to be both uniformly bounded.
Thus, in order to prove $\eqref{concalff00}$, it suffices to show that
\begin{equation}\label{conJ}
\lim_{\varepsilon\rightarrow 0} \lim_{r_k\rightarrow 0}\, J_\pm(\varepsilon,r_k\varepsilon)
=\lim_{\varepsilon\rightarrow 0} \lim_{r_k\rightarrow 0}\,\frac{1}{|\Omega_{\varepsilon,r_k\varepsilon}|}\int_{\Omega_{\varepsilon,r_k\varepsilon}\bigcap\{\pm(u-a^\pm_{\varepsilon,r_k\varepsilon})>0\}}\tilde{f}(u)\,{\rm d}t{\rm d}x=0
\end{equation}
with restricting to subsequence $(\varepsilon,r_k)$ such that, for some constants $m$ and $M$,
\begin{equation}\label{conJa}
m\leq a^-_{\varepsilon,r_k\varepsilon}\leq a^+_{\varepsilon,r_k\varepsilon}\leq M.
\end{equation}

On the other hand, since $\tilde{f}(u)\geq 0$ on ${\Bbb R}$,
by $\eqref{conbarfetaa}$, $J_\pm(\varepsilon,r_k\varepsilon)\geq 0$.
Applying  $\eqref{linearB}$--$\eqref{convexB}$ and $\eqref{caletafg0}$
to $\tilde{f}(u)$ and $\tilde{\eta}(u)$ as in $\eqref{conbarfeta}$,
for subsequence $(\varepsilon,r_k)$ satisfying $\eqref{conJa}$, we obtain
\begin{equation}\label{conconvexBuniform0}
0\leq J_\pm(\varepsilon,r_k\varepsilon) \langle \tilde{\eta}(u) \rangle_{\varepsilon,r_k\varepsilon}  \leq \mathcal{B}_{\varepsilon,r_k\varepsilon}(\tilde{f},\tilde{\eta})
=\mathcal{B}_{\varepsilon,r_k\varepsilon}(f,\eta)\lesssim \varrho(\varepsilon,r_k\varepsilon).
\end{equation}

According to $\eqref{conbarfeta}$ and $\eqref{condegenI1}$,
for subsequence $(\varepsilon,r_k)$ satisfying $\eqref{conJa}$, we have
\begin{equation}\label{conbarfeta+}
\tilde{f}(u)=f(u)-f(a^\pm_{\varepsilon,r_k\varepsilon})
-f'(\langle u\rangle_{\varepsilon,\delta}{+0})\big(u-a^\pm_{\varepsilon,r_k\varepsilon}\big)\geq 0.
\end{equation}
Since $f(u)\in {\rm Lip}_{\rm loc}({\Bbb R})$, and $u \in L^\infty_{\rm loc}$ in $\eqref{conbarfetaa}$
is uniformly bounded in $(\varepsilon,r_k)$ with $r_k<1$,
$\eqref{conJa}$ and $\eqref{conbarfeta+}$ imply that,
in order to prove $\eqref{conJ}$, it suffices to show that,
for subsequence $(\varepsilon,r_k)$ satisfying $\eqref{conJa}$,
\begin{equation}\label{conU}
\lim_{\varepsilon\rightarrow 0} \lim_{r_k\rightarrow 0}\,
\frac{1}{|\Omega_{\varepsilon,r_k\varepsilon}|}
\int_{\Omega_{\varepsilon,r_k\varepsilon}\bigcap\{\pm(u-a^\pm_{\varepsilon,r_k\varepsilon})>0\}}
\big|u-a^\pm_{\varepsilon,r_k\varepsilon}\big|\,{\rm d}t{\rm d}x=0.
\end{equation}
In fact, for any fixed $\sigma>0$, we have
\begin{equation}\label{concalulessigma}
\frac{1}{|\Omega_{\varepsilon,r_k\varepsilon}|}
\int_{\Omega_{\varepsilon,r_k\varepsilon}\bigcap\{0<\pm(u-a^\pm_{\varepsilon,r_k\varepsilon})\leq \sigma\}}
\big|u-a^\pm_{\varepsilon,r_k\varepsilon}\big|\,{\rm d}t{\rm d}x
\leq\sigma.
\end{equation}
Thus, if we can prove that,
for any fixed $\sigma>0$,
\begin{equation}\label{concalulimit}
\mathop{\overline{\lim}}\limits_{\varepsilon\rightarrow 0}\mathop{\overline{\lim}}
\limits_{r_k\rightarrow 0}\frac{1}{|\Omega_{\varepsilon,r_k\varepsilon}|}
\int_{\Omega_{\varepsilon,r_k\varepsilon}\bigcap\{\pm(u-a^\pm_{\varepsilon,r_k\varepsilon})>\sigma\}}
\big|u-a^\pm_{\varepsilon,r_k\varepsilon}\big|\,{\rm d}t{\rm d}x=0,
\end{equation}
then it follows from
$\eqref{concalulessigma}$--$\eqref{concalulimit}$ that,
for any fixed $\sigma>0$,
$$
\mathop{\overline{\lim}}\limits_{\varepsilon\rightarrow 0}
\mathop{\overline{\lim}}\limits_{r_k\rightarrow 0}\,\
\frac{1}{|\Omega_{\varepsilon,r_k\varepsilon}|}
\int_{\Omega_{\varepsilon,r_k\varepsilon}\bigcap\{\pm(u-a^\pm_{\varepsilon,r_k\varepsilon})>0\}}
\big|u-a^\pm_{\varepsilon,r_k\varepsilon}\big|\,{\rm d}t{\rm d}x
\leq \sigma,
$$
which yields $\eqref{conU}$ directly.

\smallskip
We now prove $\eqref{concalulimit}$ by contradiction.
If
$\eqref{concalulimit}$ does not hold, then there exist $\sigma^\pm_0>0$ and $c^\pm_0>0$ such that
\begin{equation}\label{concalulimit0}
\mathop{\overline{\lim}}\limits_{\varepsilon\rightarrow 0}\mathop{\overline{\lim}}\limits_{r_k\rightarrow 0}\frac{1}{|\Omega_{\varepsilon,r_k\varepsilon}|}
\int_{\Omega_{\varepsilon,r_k\varepsilon}\bigcap\{\pm(u-a^\pm_{\varepsilon,r_k\varepsilon})>\sigma^\pm_0\}}
\big|u-a^\pm_{\varepsilon,r_k\varepsilon}\big|\,{\rm d}t{\rm d}x\geq 2c^\pm_0>0,
\end{equation}
which is equivalent to saying that there exists a subsequence $(\varepsilon_j,r_{k_i})$ such that,
for small $\varepsilon_j$ and $r_{k_i}$,
\begin{equation}\label{concalulimit000}
\frac{1}{|\Omega_{\varepsilon_j,r_{k_i}\varepsilon_j}|}
\int_{\Omega_{\varepsilon_j,r_{k_i}\varepsilon_j}\bigcap\{\pm(u-a^\pm_{\varepsilon_j,r_{k_i}\varepsilon_j})>\sigma^\pm_0\}}
\big|u-a^\pm_{\varepsilon_j,r_{k_i}\varepsilon_j}\big|\,{\rm d}t{\rm d}x\geq c^\pm_0>0.
\end{equation}

Since $f'(u)$ is nondecreasing, by  Proposition $\ref{ProCon}$ and the definition of $a^\pm_{\varepsilon,\delta}$ in $\eqref{condegenI}$,
\begin{equation}\label{genuinef}
(u-a^\pm_{\varepsilon,\delta})\big(f'(u)-f'(\langle u\rangle_{\varepsilon,\delta}{+0})\big)>0
\qquad \mbox{for $\pm(u-a^\pm_{\varepsilon,\delta})>0$}.
\end{equation}
Then, from $\eqref{condegenI1}$ and $\eqref{conbarfeta+}$, for $\pm(u-a^\pm_{\varepsilon,\delta})>\sigma^\pm_0$,
\begin{align}\label{conftalyorcc}
\tilde{f}(u)&=f(u)-f(a^\pm_{\varepsilon,\delta})
-f'(\langle u\rangle_{\varepsilon,\delta}{+0})(u-a^\pm_{\varepsilon,\delta})\nonumber\\[1mm]
&=\int_0^1 \big|f'(a^\pm_{\varepsilon,\delta}+\theta(u-a^\pm_{\varepsilon,\delta}))
 -f'(\langle u\rangle_{\varepsilon,\delta}{+0})\big|{\rm d}\theta\,|u-a^\pm_{\varepsilon,\delta}|\nonumber\\[1mm]
&\geq\int_0^1 \big|f'(a^\pm_{\varepsilon,\delta}\pm\theta\sigma^\pm_0)
 -f'(\langle u\rangle_{\varepsilon,\delta}{+0})\big|{\rm d}\theta\,|u-a^\pm_{\varepsilon,\delta}|\nonumber\\[1mm]
&=\int_0^1 \big|f'(a^\pm_{\varepsilon,\delta}\pm\theta\sigma^\pm_0)
 -f'(a^\pm_{\varepsilon,\delta}{\mp 0})\big|{\rm d}\theta\,|u-a^\pm_{\varepsilon,\delta}|.
\end{align}
Since  $\eta'(u)$ is strictly increasing,
then $(u-v)(\eta'(u)-\eta'(v{\pm 0}))>0$ for any $u\neq v$.
From $\langle u\rangle_{\varepsilon,\delta}\in I^+(\langle u\rangle_{\varepsilon,\delta})=\big[a^-_{\varepsilon,\delta},a^+_{\varepsilon,\delta}\big]$ and the definition of $\tilde{\eta}(u)$ as in $\eqref{conbarfeta}$, for $\pm(u-a^\pm_{\varepsilon,\delta})>\sigma^\pm_0$,
\begin{align}\label{conetatalyorcc}
\tilde{\eta}(u)&=\eta(u)-\eta(\langle u\rangle_{\varepsilon,\delta})
-\eta'(\langle u\rangle_{\varepsilon,\delta}{+0})(u-\langle u\rangle_{\varepsilon,\delta})\nonumber\\[1mm]
&\geq\eta(u)-\eta(a^\pm_{\varepsilon,\delta})-\eta'(a^\pm_{\varepsilon,\delta}{\pm 0})(u-a^\pm_{\varepsilon,\delta})\nonumber\\[1mm]
&=\int_0^1 \big|\eta'(a^\pm_{\varepsilon,\delta}+\theta(u-a^\pm_{\varepsilon,\delta}))
 -\eta'(a^\pm_{\varepsilon,\delta}{\pm 0})\big|{\rm d}\theta\,|u-a^\pm_{\varepsilon,\delta}|\nonumber\\[1mm]
&\geq\int_0^1 \big|\eta'(a^\pm_{\varepsilon,\delta}\pm\theta\sigma^\pm_0)
 -\eta'(a^\pm_{\varepsilon,\delta}{\pm 0})\big|{\rm d}\theta \,|u-a^\pm_{\varepsilon,\delta}|.
\end{align}

Therefore, from $\eqref{conbarfetaa}$, $\eqref{concalulimit000}$, and $\eqref{conftalyorcc}$, we have
\begin{align}\label{conftalyor0}
J_\pm(\varepsilon_j,r_{k_i}\varepsilon_j)
&=\frac{1}{|\Omega_{\varepsilon_j,r_{k_i}\varepsilon_j}|}\int_{\Omega_{\varepsilon_j,r_{k_i}\varepsilon_j}\bigcap
\{\pm(u-a^\pm_{\varepsilon_j,r_{k_i}\varepsilon_j})>\sigma^\pm_0\}}
  \tilde{f}(u)\,{\rm d}t{\rm d}x\nonumber\\
&\geq\int_0^1 \big|f'(a^\pm_{\varepsilon_j,r_{k_i}\varepsilon_j}\pm\theta\sigma^\pm_0)
  -f'(a^\pm_{\varepsilon_j,r_{k_i}\varepsilon_j}{\mp 0})\big|{\rm d}\theta\nonumber\\
&\qquad\times\frac{1}{|\Omega_{\varepsilon_j,r_{k_i}\varepsilon_j}|}
  \int_{\Omega_{\varepsilon_j,r_{k_i}\varepsilon_j}\bigcap
  \{\pm(u-a^\pm_{\varepsilon_j,r_{k_i}\varepsilon_j})>\sigma^\pm_0\}}
    \big|u-a^\pm_{\varepsilon_j,r_{k_i}\varepsilon_j}\big|\,{\rm d}t{\rm d}x\nonumber\\
&\geq c^\pm_0\int_0^1 \big|f'(a^\pm_{\varepsilon_j,r_{k_i}\varepsilon_j}\pm\theta\sigma^\pm_0)
 -f'(a^\pm_{\varepsilon_j,r_{k_i}\varepsilon_j}{\mp 0})\big|{\rm d}\theta\nonumber\\
&=:K_f(a^\pm_{\varepsilon_j,r_{k_i}\varepsilon_j},\sigma^\pm_0,c^\pm_0).
\end{align}
Similarly, for $\tilde{\eta}(u)$ as in $\eqref{conbarfeta}$,
from $\eqref{concalulimit000}$ and $\eqref{conetatalyorcc}$, we have
\begin{equation}\label{conetatalyor0}
\langle \tilde{\eta}(u)\rangle_{\varepsilon_j,r_{k_i}\varepsilon_j}\geq c^\pm_0\int_0^1 \big|\eta'(a^\pm_{\varepsilon_j,r_{k_i}\varepsilon_j}\pm\theta\sigma^\pm_0)
-\eta'(a^\pm_{\varepsilon_j,r_{k_i}\varepsilon_j}{\pm 0})\big|{\rm d}\theta=:K_\eta(a^\pm_{\varepsilon_j,r_{k_i}\varepsilon_j},\sigma^\pm_0,c^\pm_0).
\end{equation}
Combining $\eqref{conftalyor0}$ with $\eqref{conetatalyor0}$, we obtain
\begin{equation}\label{confetatalyor}
J_\pm(\varepsilon_j,r_{k_i}\varepsilon_j)\langle \tilde{\eta}(u)\rangle_{\varepsilon_j,r_{k_i}\varepsilon_j}
\geq  K_f(a^\pm_{\varepsilon_j,r_{k_i}\varepsilon_j},\sigma^\pm_0,c^\pm_0)\, K_\eta(a^\pm_{\varepsilon_j,r_{k_i}\varepsilon_j},\sigma^\pm_0,c^\pm_0).
\end{equation}

To pass the limits in $\eqref{confetatalyor}$, by $\eqref{ufuxi}$ and $\eqref{conJa}$, we can choose a subsequence (still denoted)
$(\varepsilon_j,r_{k_i})$ in $\eqref{concalulimit000}$ so that
 $\lim_{\varepsilon_j\rightarrow 0} \lim_{r_{k_i}\rightarrow 0}\langle u\rangle_{\varepsilon_j,r_{k_i}\varepsilon_j}=:\tilde{u}$ and $\lim_{\varepsilon_j\rightarrow 0} \lim_{r_{k_i}\rightarrow 0}a^\pm_{\varepsilon_j,r_{k_i}\varepsilon_j}=:a^\pm(\tilde{u})$, and the limits of $J_\pm(\varepsilon_j,r_{k_i}\varepsilon_j)$ and $\langle \tilde{\eta}(u)\rangle_{\varepsilon_j,r_{k_i}\varepsilon_j}$ exist.
 Since $f'(u)$ is nondecreasing and $\eta'(u)$ is strictly increasing,
it follows  from $\eqref{condegenI1}$, $\eqref{genuinef}$, and $\eqref{conftalyor0}$--$\eqref{confetatalyor}$ that
\begin{align}\label{confetatalyor1}
&\lim\limits_{\varepsilon_j\rightarrow 0} \lim\limits_{r_{k_i}\rightarrow 0}\,J_\pm(\varepsilon_j,r_{k_i}\varepsilon_j)\langle \tilde{\eta}(u)\rangle_{\varepsilon_j,r_{k_i}\varepsilon_j}\nonumber\\[1mm]
&\geq K_f(a^\pm(\tilde{u}),\sigma^\pm_0,c^\pm_0)\, K_\eta(a^\pm(\tilde{u}),\sigma^\pm_0,c^\pm_0)\nonumber\\[1mm]
&=(c^\pm_0)^2\int_0^1 \big|f'(a^\pm(\tilde{u})\pm\theta\sigma^\pm_0)
 -f'(a^\pm(\tilde{u}){\mp 0})\big|{\rm d}\theta
\int_0^1 \big|\eta'(a^\pm(\tilde{u})\pm\theta\sigma^\pm_0)
-\eta'(a^\pm(\tilde{u}){\pm 0})\big|{\rm d}\theta\nonumber\\[1mm]
&\geq (c^\pm_0)^2\, \big|f'(a^\pm(\tilde{u})\pm\tilde{\theta}_1\sigma^\pm_0)
-f'(a^\pm(\tilde{u}){\mp 0})\big|\,
 \big|\eta'(a^\pm(\tilde{u})\pm\tilde{\theta}_2\sigma^\pm_0)
    -\eta'(a^\pm(\tilde{u}){\pm 0})\big|\nonumber\\[1mm]
&>0
\end{align}
for some $\tilde{\theta}_1,\tilde{\theta}_2\in(0,1)$.

Restricting to subsequence $(\varepsilon_j, r_{k_i})$, it is clear
that $\eqref{confetatalyor1}$ contradicts to $\eqref{conconvexBuniform0}$
with $\eqref{calBpg0k}$.
We conclude that $\eqref{concalulimit}$ holds, so does $\eqref{conU}$.
Therefore, we have proved $\eqref{calff00}$  for the case that $f'(u)$ is nondecreasing.

This completes the proof of Theorem $\ref{Theinfty}$.

\begin{remark}\label{varphitx0}
To understand $\eqref{equationxi}$, we first see that $\eqref{xifu}$ infers that
$$
\langle (\varphi_{\varepsilon})_t \rangle_{\varepsilon,\delta}
+f(\langle(\varphi_{\varepsilon})_x\rangle_{\varepsilon,\delta})
=-\langle f(u)\rangle_{\varepsilon,\delta}+f(\langle u \rangle_{\varepsilon,\delta})\leq 0,
$$
by applying the Jensen inequality to the convex function $f(u)$.
Passing the limit and noting that
$\varphi_\varepsilon =\varphi-\varepsilon |(t,x)|$ with $\varphi \in C^1$,  we obtain
$$
\big(\varphi_t+f(\varphi_x)\big)(0,0) \leq 0.
$$
On the other hand, a viscosity supersolution of $\eqref{equationh}$ with convex function $f(u)$
should satisfy
$$
\big(\varphi_t+f(\varphi_x)\big)(0,0)\geq 0.
$$
Therefore, \eqref{equationxi} should hold at the minimum point $(t,x)=(0,0)$ for $w-\varphi$.
\end{remark}

\smallskip
\section{Proof of Theorem 2.4}
In Theorem $\ref{Thep}$, we assume that
$u\in L^p_{\rm loc}$  such that $Q(u)\in L^1_{\rm loc}$.
In general, function $w=w(t,x)$ defined by $w_x=u$ and $w_t=-f(u)$ in Theorem $\ref{Thep}$ is continuous, which is well-defined for the viscosity solutions of the Hamilton-Jacobi equation $\eqref{equationh}$. However, the H\"{o}lder continuity of $w$ can be obtained
if the limit for $\eta(u)$ in $\eqref{growthrate}$ holds for $\beta>0$, $i.e.$, the entropy function $\eta(u)$ satisfies that
\begin{equation}\label{growthratefeta}
\lim_{u \rightarrow \pm\infty}\frac{\eta(u)}{|u|^{\beta+1}}\geq M_2>0 \qquad {\rm for \ some\ } \beta>0.
\end{equation}

\subsection{H\"{o}lder continuity of $w$}
If $\eqref{growthratefeta}$ holds,  by the definition of $w$ in Theorem $\ref{Thep}$,
$$
w\in L_{\rm loc}^1({\Bbb R}^+\times{\Bbb R}),
\qquad
w_x=u \in L_{\rm loc}^p({\Bbb R}^+\times{\Bbb R}), \qquad
w_t=-f(u)\in L_{\rm loc}^1({\Bbb R}^+\times{\Bbb R}).
$$
We claim that $w$ is {\it H\"{o}lder continuous} in $(t,x)$
on any compact subset in ${\Bbb R}^+\times{\Bbb R}$.
This can be seen in the following four steps:

\smallskip
1. Testing with a cut-off function $\phi$ in $x$, we obtain from $\eqref{etamu2}$ that
$\int\phi(x)\eta(u(t,x))\,{\rm d}x$ is locally bounded in $t$, {\it i.e.},
\begin{equation}\label{etaloc}
\eta(u)\in L^\infty_{\rm loc}\big({\Bbb R}^+_t,L^1_{\rm loc}({\Bbb R}_x)\big).
\end{equation}
Then, using $\eqref{growthratefeta}$, we have
\begin{equation}\label{uloc}
u\in L^\infty_{\rm loc}\big({\Bbb R}^+_t,L^{\beta+1}_{\rm loc}({\Bbb R}_x)\big).
\end{equation}
Since $w_x=u$, it follows from $\eqref{uloc}$ that, for any $t\in [t_1,t_2]\subset {\Bbb R}^+_t$,
\begin{equation}\label{htcloc}
[w(t,\cdot)]_{C^{0,\gamma_1}_{\rm loc}({\Bbb R}_x)}
\lesssim\|u\|_{L^{\beta+1}_{\rm loc}({\Bbb R}_x)}<\infty
\qquad \mbox{\rm for $\gamma_1=\frac{\beta}{\beta+1}$},
\end{equation}
which implies that $w$ is H\"{o}lder continuous in $x$, locally uniformly in $t$, $i.e.$,
\begin{equation}\label{htxloc}
w\in L^\infty_{\rm loc}\big({\Bbb R}^+_t,C^{0,\gamma_1}_{\rm loc}({\Bbb R}_x)\big)
\qquad \mbox{\rm for $\gamma_1=\frac{\beta}{\beta+1}$},
\end{equation}
where ${\Bbb R}^+_t$ and ${\Bbb R}_x$
represent the corresponding domains for the independent variables $t$ and $x$, respectively.

\smallskip
2. We now show that
\begin{equation}\label{htxcloc}
w\in C^{0,\frac{1}{\gamma}}_{\rm loc}\big({\Bbb R}^+_t,L^1_{\rm loc}({\Bbb R}_x)\big),
\end{equation}
where $\gamma$ is given by $\eqref{quadratic2}$.

Fix $t_2>t_1$ and $x_2>x_1$. Since $w_t=-f(u)$,
for any $t,t'\in [t_1,t_2]$ with $t'>t$,
\begin{equation}\label{ht2t1}
\Big|\int_{x_1}^{x_2}|w(t',\xi)|\,{\rm d}\xi-\int_{x_1}^{x_2}|w(t,\xi)|\,{\rm d}\xi\Big|
\leq\int_{x_1}^{x_2}|w(t',\xi)-w(t,\xi)|\,{\rm d}\xi
\leq\int_{t}^{t'}\int_{x_1}^{x_2}|f(u)|\,{\rm d}\tau{\rm d}\xi.
\end{equation}

For the case that  $\eqref{quadratic2}$ holds for $\gamma=1$,
$|f(u)|\lesssim |\eta(u)|+1$ for $u\in {\Bbb R}$ so that, by $\eqref{etaloc}$,
\begin{align}\label{alphalesbeta}
\int_{t}^{t'}\int_{x_1}^{x_2}|f(u)|\,{\rm d}\tau{\rm d}\xi
&\lesssim \int_{t}^{t'}\int_{x_1}^{x_2}(|\eta(u)|+1)\,{\rm d}\tau{\rm d}\xi \nonumber\\[1mm]
&\leq \max_{t\in [t_1,t_2]}\big\{\|\eta(u(t,\cdot))\|_{L^{1}([x_1,x_2])}\big\}\,(t'-t)+(x_2-x_1)(t'-t)\nonumber\\[1mm]
&\lesssim t'-t.
\end{align}

For the case that $\eqref{quadratic2}$ holds for some $\gamma>1$, but does not hold for $\gamma=1$,
by a simple calculation, $\eqref{quadratic2}$ implies that
$$
|f(u)|\,(|\eta(u)|+1)^{-\frac{1}{\gamma}}\lesssim (1+|Q(u)|)^{\frac{\gamma-1}{\gamma}}
\qquad \mbox{\rm for $u\in {\Bbb R}$}.
$$
Since $u\in L^p_{\rm loc}$ such that $Q(u)\in L^1_{\rm loc}$, by the Sobolev inequality, we obtain
\begin{align}\label{alphalesbeta1}
&\int_{t}^{t'}\int_{x_1}^{x_2}|f(u)|\,{\rm d}\tau{\rm d}\xi\nonumber\\[1mm]
&=\int_{t}^{t'}\int_{x_1}^{x_2}(|\eta(u)|+1)^{\frac{1}{\gamma}}\, \big(|f(u)|\,(|\eta(u)|+1)^{-\frac{1}{\gamma}}\big)\,{\rm d}\tau{\rm d}\xi\nonumber\\[1mm]
&\leq\Big(\int_{t}^{t'}\int_{x_1}^{x_2}(|\eta(u)|+1)\,{\rm d}\tau{\rm d}\xi\Big)^{\frac{1}{\gamma}}\,
\Big(\int_{t}^{t'}\int_{x_1}^{x_2}\big(|f(u)|\,(|\eta(u)|+1)^{-\frac{1}{\gamma}}\big)^{\frac{\gamma}{\gamma-1}}\,{\rm d}\tau{\rm d}\xi\Big)^{\frac{\gamma-1}{\gamma}}\nonumber\\[1mm]
&\lesssim \Big(\int_{t}^{t'}\int_{x_1}^{x_2}(|\eta(u)|+1)\,{\rm d}\tau{\rm d}\xi\Big)^{\frac{1}{\gamma}}\,
\Big(\int_{t}^{t'}\int_{x_1}^{x_2}(1+|Q(u)|)
 \,{\rm d}\tau{\rm d}\xi\Big)^{\frac{\gamma-1}{\gamma}}\nonumber\\[1mm]
&\leq \Big(\int_{t}^{t'}\int_{x_1}^{x_2}(|\eta(u)|+1)\,{\rm d}\tau{\rm d}\xi\Big)^{\frac{1}{\gamma}}\,
\Big(\int_{t_1}^{t_2}\int_{x_1}^{x_2}(1+|Q(u)|)\,{\rm d}\tau{\rm d}\xi\Big)^{\frac{\gamma-1}{\gamma}}\nonumber\\[1mm]
&\lesssim \Big(\int_{t}^{t'}\int_{x_1}^{x_2}(|\eta(u)|+1)\,{\rm d}\tau{\rm d}\xi\Big)^{\frac{1}{\gamma}}
\lesssim (t'-t)^{\frac{1}{\gamma}}.
\end{align}
Combining $\eqref{ht2t1}$--$\eqref{alphalesbeta1}$ together, we obtain $\eqref{htxcloc}$.

\smallskip
3. By interpolation, \eqref{htxloc}--\eqref{htxcloc} imply that
\begin{equation}\label{hxtloc}
w\in L^\infty_{\rm loc}\big({\Bbb R}_x,C^{0,\gamma_2}_{\rm loc}({\Bbb R}^+_t)\big)
\qquad \mbox{\rm for $\gamma_2=\frac{\beta}{\gamma(2\beta+1)}$}.
\end{equation}

In fact, let $\zeta\in C^\infty_{\rm c}({\Bbb R}_x)$ be non-negative
with ${\rm spt}\,\zeta(x)\subset(-1,1)$
and $\int_{{\Bbb R}}\zeta(x)\,{\rm d}x=1$.
Set $\zeta_\varepsilon(x):=\frac{1}{\varepsilon}\zeta(\frac{x}{\varepsilon})$,
and let $*$ denote the convolution in the $x-$variable. Then we have
\begin{align}\label{hxtloc1}
|w(t',x)-w(t,x)|
&\leq|w(t',x)-(w*\zeta_\varepsilon)(t',x)|+|(w*\zeta_\varepsilon)(t',x)-(w*\zeta_\varepsilon)(t,x)|\nonumber\\[1mm]
&\quad\,+|w(t,x)-(w*\zeta_\varepsilon)(t,x)|\nonumber\\[1mm]
&\leq \varepsilon^{\gamma_1}\sup_{y,|y-x|\leq\varepsilon}\frac{|w(t',x)-w(t',y)|}{|x-y|^{\gamma_1}}
+\sup_{{\Bbb R}}|\zeta|\,\,\frac{1}{\varepsilon}
   \int_{x-\varepsilon}^{x+\varepsilon}|w(t',y)-w(t,y)|\,{\rm d}y\nonumber\\[1mm]
&\quad\,+\varepsilon^{\gamma_1}\sup_{y,|y-x|\leq\varepsilon}\frac{|w(t,x)-w(t,y)|}{|x-y|^{\gamma_1}}\nonumber\\[1mm]
&\lesssim\varepsilon^{\gamma_1}+\varepsilon^{-1}|t'-t|^{\frac{1}{\gamma}}.
\end{align}
Choosing $\varepsilon=|t'-t|^\sigma$ with $\sigma=\frac{\beta+1}{\gamma(2\beta+1)}$,
then $\gamma_1\sigma=\frac{1}{\gamma}-\sigma=\gamma_2$. Thus, $\eqref{hxtloc1}$ implies $\eqref{hxtloc}$.

\smallskip
4. For any bounded $\mathcal{K}\Subset {\Bbb R}^+\times{\Bbb R}$, we have
\begin{equation}\label{hxtcontinuity}
\sup_{(t,x),(t',x')\in \mathcal{K}}\frac{|w(t',x')-w(t,x)|}{|t'-t|^{\gamma_2}+|x'-x|^{\gamma_1}}<\infty,
\end{equation}
where $\gamma_1$ in $\eqref{htxloc}$ and $\gamma_2$ in $\eqref{hxtloc}$ are determined
by constants $\beta>0$ and $\gamma\geq 1$, which are respectively given by $\eqref{growthratefeta}$ and $\eqref{quadratic2}$.

In fact, according to $\eqref{htxloc}$ and $\eqref{hxtloc}$,
for any $(t,x),(t',x')\in \mathcal{K}\subset[t_1,t_2]\times[x_1,x_2]$,
\begin{align*}
\frac{|w(t',x')-w(t,x)|}{|t'-t|^{\gamma_2}+|x'-x|^{\gamma_1}}&
\leq\frac{|w(t',x')-w(t',x)|}{|t'-t|^{\gamma_2}+|x'-x|^{\gamma_1}}
+\frac{|w(t',x)-w(t,x)|}{|t'-t|^{\gamma_2}+|x'-x|^{\gamma_1}}\\[1mm]
&\leq\frac{|w(t',x')-w(t',x)|}{|x'-x|^{\gamma_1}}+\frac{|w(t',x)-w(t,x)|}{|t'-t|^{\gamma_2}}\\[1mm]
&\leq {\rm ess}\sup_{t'\in[t_1,t_2]}[w(t',\cdot)]_{C^{0,\gamma_1}([x_1,x_2])}
+{\rm ess}\sup_{x\in[x_1,x_2]}[w(\cdot,x)]_{C^{0,\gamma_2}([t_1,t_2])}\\[1mm]
&<\infty.
\end{align*}

\begin{remark}\label{Holderab}
For the case that $f(u)\simeq M_1|u|^{\alpha+1}$ and $\eta(u)\simeq M_2|u|^{\beta+1}$ as $u \to \pm\infty$ for some $\alpha, \beta>0$.
If $\alpha\leq \beta$, then $\eqref{quadratic2}$ holds for $\gamma=1${\rm ;} and if $\alpha> \beta$,  then $\eqref{quadratic2}$ holds for $\gamma=\frac{\alpha+1}{\beta+1}$.
Therefore, $\eqref{hxtcontinuity}$ holds for $\gamma_1=\frac{\beta}{\beta+1}$ and $\gamma_2$ given by
$$
\gamma_2=\frac{\beta}{2\beta+1}\quad \mbox{\rm if $\alpha\leq\beta$}, \qquad\,\,\,
\gamma_2=\frac{\beta(\beta+1)}{(\alpha+1)(2\beta+1)}\quad \mbox{\rm if $\alpha>\beta$}.
$$
\end{remark}

\subsection{Viscosity solutions}
Similar to \S 4.1, it can be checked that $w$ is a viscosity subsolution of $\eqref{equationh}$.

\smallskip
Now we show that $w$ is also a viscosity supersolution of $\eqref{equationh}$.
That is to say, if $\varphi$ is a $C^1$--function such that $w-\varphi$ has a minimum at $(0,0)$ with $(w-\varphi)(0,0)=0$,
then
\begin{equation}\label{equationxip}
\big(\varphi_t+f(\varphi_x)\big)(0,0)=0.
\end{equation}

Following the same arguments as Step 2 in \S 4.2,
we let $\varphi_\varepsilon=\varphi-\varepsilon|(t,x)|$ with $\varepsilon\in(0,1]$
such that $w-\varphi_\varepsilon$ has a strict minimum at $(0,0)$.
Let $\Omega_{\varepsilon,\delta}$
and $\langle\,\cdot\,\rangle_{\varepsilon,\delta}$ be
defined by $\eqref{defOmega}$ and $\eqref{integralu}$, respectively.
Then $\eqref{OmegaB}$ and $\eqref{xifu}$--$\eqref{hxi}$ all hold.

We divide the remaining proof into three steps.

\smallskip
1. According to $\eqref{calBinfty}$, in order to complete the estimate
of $\mathcal{B}_{\varepsilon,\delta}(f,\eta)$ as Step 4
in \S 4.2, we need the estimates of $I_1(\varepsilon,\delta)$ in $\eqref{calIinfty}$ and $I_2(\varepsilon,\delta)$ in $\eqref{calIIinfty}$.

For $I_1(\varepsilon,\delta)$, it suffices to show the last step of $\eqref{calIinfty}$,
which is directly implied by
\begin{equation}\label{deltaomegap}
\delta^2\lesssim |\Omega_{\varepsilon,\delta}| \qquad \mbox{for\ small $\delta>0$}.
\end{equation}
By the same arguments for $\Omega_{\varepsilon,\delta}$ as Step 4 in \S 4.2,
the key point for proving $\eqref{deltaomegap}$ is to establish $\eqref{hxithxix}$.
For this purpose, it suffices to show that
\begin{equation}\label{uflesssim0}
\langle |u| \rangle_{\varepsilon,\delta}\lesssim 1, \qquad
\langle |f(u)| \rangle_{\varepsilon,\delta}\lesssim 1.
\end{equation}
Since $|\langle (\varphi_{\varepsilon})_t\rangle_{\varepsilon,\delta}|\lesssim 1$ and
$|\langle (\varphi_{\varepsilon})_x\rangle_{\varepsilon,\delta}|\lesssim 1$, it
follows from $\eqref{xifu}$ that
\begin{equation}\label{uflesssim}
|\langle u \rangle_{\varepsilon,\delta}|\lesssim 1, \qquad |\langle f(u)\rangle_{\varepsilon,\delta}|\lesssim 1.
\end{equation}
According to $\eqref{growthrate}$ and $\eqref{uflesssim}$, we have
\begin{equation}\label{ualphalesssim}
\langle |u| \rangle _{\varepsilon,\delta}\lesssim 1+ \langle|f(u)| \rangle_{\varepsilon,\delta} \lesssim 1+ |\langle f(u)\rangle_{\varepsilon,\delta}|\lesssim 1,
\end{equation}
which yields $\eqref{uflesssim0}$ so that $\eqref{hxithxix}$ holds.
Thus, by the same argument for $\eqref{calIinfty}$, it follows from $\eqref{etamu2}$ and $\eqref{deltaomegap}$ that
\begin{equation}\label{calIp}
I_1(\varepsilon,\delta)\leq \frac{1}{|\Omega_{\varepsilon,\delta}|}\int \max\{\delta-(w-\varphi_\varepsilon),0\}\,{\rm d}\mu\leq \frac{\delta}{|\Omega_{\varepsilon,\delta}|}\mu(\Omega_{\varepsilon,\delta})\lesssim \frac{1}{\delta}\mu(B_{\frac{\delta}{\varepsilon}}(0,0)).
\end{equation}

For $I_2(\varepsilon,\delta)$, it suffices to establish the additional estimates of $\langle| \eta(u)|\rangle_{\varepsilon,r_k\varepsilon}$ and $\langle |q(u)|\rangle_{\varepsilon,r_k\varepsilon}$, where $\{r_k\}$ is the subsequence in $\eqref{mu0k}$.  By $\eqref{growthrate}$--$\eqref{quadratic1}$,
we first have
\begin{equation}\label{grdfeta}
\langle| \eta(u)|\rangle_{\varepsilon,r_k\varepsilon} \lesssim 1+ \langle|q(u)|\rangle_{\varepsilon,r_k\varepsilon},\qquad \langle|q(u)|\rangle_{\varepsilon,r_k\varepsilon} \lesssim 1+ \langle Q(u)\rangle_{\varepsilon,r_k\varepsilon},
\end{equation}
and
\begin{equation}\label{grdfetaq}
\lim_{u\rightarrow \pm\infty} h(u):=\lim_{u\rightarrow \pm\infty} \frac{Q(u)}{|q(u)|}=\infty.
\end{equation}
According to the definition of $I_1(\varepsilon,\delta)$ in $\eqref{calBinfty}$, from $\eqref{calIp}$--$\eqref{grdfetaq}$, we have
\begin{align}
\langle h(u)|q(u)|\rangle_{\varepsilon,r_k\varepsilon}
&=\left\langle (-f(u),u)\cdot(\eta (u),q(u))\right\rangle_{\varepsilon,r_k\varepsilon} \nonumber\\[1mm]
&=\left\langle((\varphi_{\varepsilon})_t, (\varphi_{\varepsilon})_x)\cdot(\eta(u),q(u))\right\rangle_{\varepsilon,r_k\varepsilon}+I_1(\varepsilon,r_k\varepsilon)\nonumber\\[1mm]
&\textstyle\lesssim \sup_{\Omega_{\varepsilon,r_k\varepsilon}}
  \left |((\varphi_{\varepsilon})_t, (\varphi_{\varepsilon})_x)\right|\left\langle\left |(\eta(u),q(u))\right|\right\rangle_{\varepsilon,r_k\varepsilon}+\frac{1}{r_k}\mu(B_{r_k}(0,0))\varepsilon^{-1}\nonumber\\[1mm]
&\textstyle\lesssim \langle|\eta(u)|\rangle_{\varepsilon,r_k\varepsilon} +\langle|q(u)|\rangle_{\varepsilon,r_k\varepsilon} + \frac{1}{r_k}\mu(B_{r_k}(0,0))\varepsilon^{-1}\nonumber\\[1mm]
&\textstyle\lesssim
\langle |q(u)|\rangle_{\varepsilon,r_k\varepsilon}+1+\rho_k\, \varepsilon^{-1},\label{calup}
\end{align}
where $\rho_k:=\frac{1}{r_k}\mu(B_{r_k}(0,0))$.
This, by $\eqref{deflesssim}$, implies that
\begin{equation}\label{calupab}
\langle h(u)|q(u)|\rangle_{\varepsilon,r_k\varepsilon}\leq C\big(\langle |q(u)|\rangle_{\varepsilon,r_k\varepsilon}+1+\rho_k\, \varepsilon^{-1}\big)
\end{equation}
for some constant $C>0$. From $\eqref{grdfetaq}$, we have
\begin{equation}\label{growthrateetaQ}
\langle Q(u)\rangle_{\varepsilon,r_k\varepsilon}=\langle h(u)|q(u)|\rangle_{\varepsilon,r_k\varepsilon}\lesssim 1+\langle (h(u)-C)|q(u)|\rangle_{\varepsilon,r_k\varepsilon}
\lesssim 1+\rho_k\, \varepsilon^{-1},
\end{equation}
which, by $\eqref{grdfeta}$, implies that
\begin{equation}\label{growthrateetaq}
\langle|q(u)|\rangle_{\varepsilon,r_k\varepsilon}
\lesssim 1+\rho_k\, \varepsilon^{-1}, \qquad \langle| \eta(u)|\rangle_{\varepsilon,r_k\varepsilon} \lesssim 1+\rho_k\, \varepsilon^{-1}.
\end{equation}
Therefore, the estimate of $I_2(\varepsilon,\delta)$ in $\eqref{calIIinfty}$ becomes
\begin{align}\label{calIIp}
I_2(\varepsilon,r_k\varepsilon)
&\textstyle=\sup_{\Omega_{\varepsilon,r_k\varepsilon}}\big|((\varphi_{\varepsilon})_t, (\varphi_{\varepsilon})_x)
 - \left\langle((\varphi_{\varepsilon})_t, (\varphi_{\varepsilon})_x)\right\rangle_{\varepsilon,r_k\varepsilon}\big|\,
 \left\langle\left |(\eta(u),q(u)) \right|\right \rangle _{\varepsilon,r_k\varepsilon}\nonumber\\[1mm]
&\lesssim \big(r_k+\varepsilon\big) \big(\langle|\eta(u)|\rangle_{\varepsilon,r_k\varepsilon} + \langle|q(u)|\rangle_{\varepsilon,r_k\varepsilon}\big)\nonumber\\[1mm]
&\lesssim (r_k+\varepsilon)(1+\rho_k\,\varepsilon^{-1})\nonumber\\[1mm]
&=\varepsilon(1+r_k\,\varepsilon^{-1})(1+\rho_k\,\varepsilon^{-1}).
\end{align}

Using $\eqref{calIp}$ and $\eqref{calIIp}$, similar to $\eqref{calBinfty}$, we have
\begin{align}\label{calBp}
\mathcal{B}_{\varepsilon,r_k\varepsilon}(f,\eta)&\leq I_1(\varepsilon,r_k\varepsilon)+I_2(\varepsilon,r_k\varepsilon)\nonumber\\[1mm]
&\lesssim \rho_k\,\varepsilon^{-1}+\varepsilon(1+r_k\,\varepsilon^{-1})(1+\rho_k\,\varepsilon^{-1})
=: \tilde{\varrho}(\varepsilon,r_k).
\end{align}
Then it follows from $\eqref{convexB}$ that
\begin{equation}\label{caletafg}
0\leq \langle \eta(u)-\eta(\langle u\rangle_{\varepsilon,r_k\varepsilon})\rangle_{\varepsilon,r_k\varepsilon}\,\langle f(u)-f(\langle
u\rangle_{\varepsilon,r_k\varepsilon})\rangle_{\varepsilon,r_k\varepsilon}
\leq \mathcal{B}_{\varepsilon,r_k\varepsilon}(f,\eta)\lesssim \tilde{\varrho}(\varepsilon,r_k).
\end{equation}
On the other hand, by the definition of $\tilde{\varrho}(\varepsilon,r_k)$ in $\eqref{calBp}$, from $\eqref{mu0k}$ and $\eqref{calup}$, we have
\begin{equation}\label{calBpg}
\lim_{\varepsilon\rightarrow 0} \lim_{r_k\rightarrow 0}\tilde{\varrho}(\varepsilon,r_k)=0.
\end{equation}

\smallskip
2. We now show that
\begin{equation}\label{calff0}
\lim_{\varepsilon\rightarrow 0} \lim_{r_k\rightarrow 0}|\langle f(u)-f(\langle u\rangle_{\varepsilon,r_k\varepsilon})\rangle_{\varepsilon,r_k\varepsilon}|=0.
\end{equation}

Following the same arguments as Step 6 in \S 4.2, we choose $\tilde{f}(u)$ and $\tilde{\eta}(u)$ as in $\eqref{conbarfeta}$, $i.e.$,
\begin{equation}\label{conbarfetalp}
\begin{cases}
\tilde{f}(u):=f(u)-f(\langle u\rangle_{\varepsilon,\delta})
  -f'(\langle u\rangle_{\varepsilon,\delta}{+0})(u-\langle u\rangle_{\varepsilon,\delta})\geq 0,\\[1mm]
\tilde{\eta}(u):=\eta(u)-\eta(\langle u\rangle_{\varepsilon,\delta})
 -\eta'(\langle u\rangle_{\varepsilon,\delta}{+0})(u-\langle u\rangle_{\varepsilon,\delta})\geq 0.
\end{cases}
\end{equation}
Since $\eta(u)$ is strictly convex as in $\eqref{convexg}$, then $\tilde{\eta}(u)>0$ for $u\neq \langle u\rangle_{\varepsilon,\delta}$. By $\eqref{uflesssim}$, there exists a $N_1>0$ such that $|\langle u\rangle_{\varepsilon,\delta}|\leq N_1$ holds uniformly for $(\varepsilon,\delta)$ with $\delta\leq\varepsilon$.
Then $I^+(N_1)$ as in $\eqref{degenIc}$ is bounded;
otherwise, $f(u)$ would be a linear function so that $Q(u)=O(1)\eta(u)$ for $u>N_1$ or $u<N_1$, which contradicts to $\eqref{quadratic1}$.
 Therefore, we have
\begin{equation}\label{fetatilde0}
\tilde{f}(u)>0,\quad \tilde{\eta}(u)>0  \qquad \mbox{\rm for\ $|u|>N_2$},
\end{equation}
where $N_2:= \max \{ |\sup I^+(N_1)|,\,|\inf I^+(N_1)|\}$.

Furthermore, by $\eqref{growthrate}$ and $\eqref{uflesssim}$, for sufficiently large $N>N_2$,
\begin{equation}\label{fetatilde}
\tilde{f}(u)\sim f(u)-f(\langle u\rangle_{\varepsilon,\delta}),\quad \tilde{\eta}(u)\sim \eta(u)-\eta(\langle u\rangle_{\varepsilon,\delta})\qquad \mbox{\rm for $|u|>N$}.
\end{equation}
Noticing that $\langle u-\langle u\rangle_{\varepsilon,\delta}\rangle_{\varepsilon,\delta}=0$, from $\eqref{conbarfetalp}$, we have
\begin{align}\label{conbarfetaap}
\langle f(u)-f(\langle u\rangle_{\varepsilon,\delta})\rangle_{\varepsilon,\delta}
&=\big\langle f(u)-f(\langle u\rangle_{\varepsilon,\delta})
-f'(\langle u\rangle_{\varepsilon,\delta}{+0})\big(u-\langle u\rangle_{\varepsilon,\delta}\big)\big\rangle_{\varepsilon,\delta}\nonumber\\[1mm]
&=\frac{1}{|\Omega_{\varepsilon,\delta}|}\Big(\int_{\Omega^-_{\varepsilon,\delta}}
+\int_{\Omega^0_{\varepsilon,\delta}}
+\int_{\Omega^+_{\varepsilon,\delta}}
+\int_{\Omega^N_{\varepsilon,\delta}}\Big)\tilde{f}(u)\,{\rm d}t{\rm d}x\nonumber\\[1mm]
&=:J_-(\varepsilon,\delta)+J_0(\varepsilon,\delta)+J_+(\varepsilon,\delta)+J_N(\varepsilon,\delta),
\end{align}
where $a^\pm_{\varepsilon,\delta}$ are given by $\eqref{condegenI}$, and $\Omega^\pm_{\varepsilon,\delta}$, $\Omega^0_{\varepsilon,\delta}$, and $\Omega^N_{\varepsilon,\delta}$ are defined by
\begin{equation}\label{conomegap}
\begin{cases}
\Omega^-_{\varepsilon,\delta}=\Omega_{\varepsilon,\delta}\bigcap\{-N<u<a^-_{\varepsilon,\delta}\},\quad
&\Omega^0_{\varepsilon,\delta}=\Omega_{\varepsilon,\delta}\bigcap\{a^-_{\varepsilon,\delta}\leq u\leq a^+_{\varepsilon,\delta}\}, \\[1mm]
\Omega^+_{\varepsilon,\delta}=\Omega_{\varepsilon,\delta}\bigcap\{a^+_{\varepsilon,\delta}<u<N\},\quad
&\Omega^N_{\varepsilon,\delta}=\Omega_{\varepsilon,\delta}\bigcap\{|u|\geq N\}.
\end{cases}
\end{equation}

For $J_0(\varepsilon,\delta)$, it follows from $\eqref{conbarfeta}$ and $\eqref{condegenI}$
that $\tilde{f}(u)\equiv 0$ on $I^+(\langle u\rangle_{\varepsilon,\delta})$, so that
\begin{equation}\label{conJp0}
J_0(\varepsilon,\delta)\equiv 0 \qquad {\rm for\ any}\ (\varepsilon,\delta).
\end{equation}

For $J_\pm(\varepsilon,\delta)$, from $|\langle u \rangle_{\varepsilon,\delta}|\lesssim 1$
in $\eqref{uflesssim}$ and
$u$ on $\Omega^\pm_{\varepsilon,\delta}$
%$u\in \Omega^\pm_{\varepsilon,\delta}$
uniformly bounded by $N$,
by the same arguments as in Step 6 in \S 4.2, we have
\begin{equation}\label{conJp}
\lim_{\varepsilon\rightarrow 0} \lim_{r_k\rightarrow 0}\, J_\pm(\varepsilon,r_k\varepsilon)=\lim_{\varepsilon\rightarrow 0} \lim_{r_k\rightarrow 0}\,\frac{1}{|\Omega_{\varepsilon,r_k\varepsilon}|}\int_{\Omega^\pm_{\varepsilon,r_k\varepsilon}}\tilde{f}(u)\,{\rm d}t{\rm d}x=0.
\end{equation}

For $J_N(\varepsilon,\delta)$, we now show that
\begin{equation}\label{conJpN}
\lim_{\varepsilon\rightarrow 0} \lim_{r_k\rightarrow 0}\, J_N(\varepsilon,r_k\varepsilon)=\lim_{\varepsilon\rightarrow 0} \lim_{r_k\rightarrow 0}\,\frac{1}{|\Omega_{\varepsilon,r_k\varepsilon}|}\int_{\Omega^N_{\varepsilon,r_k\varepsilon}}\tilde{f}(u)\,{\rm d}t{\rm d}x=0.
\end{equation}
Applying  $\eqref{linearB}$--$\eqref{convexB}$ and $\eqref{caletafg}$ to $\tilde{f}(u)$ and $\tilde{\eta}(u)$,
we obtain from $\eqref{conbarfetaap}$  that
\begin{equation}\label{conconvexBuniformp}
0\leq J_N(\varepsilon,r_k\varepsilon) \langle \tilde{\eta}(u) \rangle_{\varepsilon,r_k\varepsilon}  \leq \mathcal{B}_{\varepsilon,r_k\varepsilon}(\tilde{f},\tilde{\eta})
=\mathcal{B}_{\varepsilon,r_k\varepsilon}(f,\eta)\lesssim \tilde{\varrho}(\varepsilon,r_k).
\end{equation}
Denote $U_N(\varepsilon,\delta)$ by
\begin{equation}\label{defualphabeta}
U_N(\varepsilon,\delta):=\frac{1}{|\Omega_{\varepsilon,\delta}|}\int_{\Omega^N_{\varepsilon,\delta}}\tilde{\eta}(u)\,{\rm d}t{\rm d}x.
\end{equation}

For the case that $\eqref{quadratic2}$ holds for $\gamma=1$, $f(u) \lesssim \eta(u)$ as $u \to \pm\infty$ so that
\begin{equation}\label{ualphabeta}
J_N(\varepsilon,\delta)=\frac{1}{|\Omega_{\varepsilon,\delta}|}\int_{\Omega^N_{\varepsilon,\delta}}\tilde{f}(u)\,{\rm d}t{\rm d}x
\lesssim \frac{1}{|\Omega_{\varepsilon,\delta}|}\int_{\Omega^N_{\varepsilon,\delta}}\tilde{\eta}(u)\,{\rm d}t{\rm d}x=U_N(\varepsilon,\delta).
\end{equation}
Noticing that $\tilde{f}(u),\tilde{\eta}(u)\geq 0$ on ${\Bbb R}$, from $\eqref{conbarfetaap}$ and $\eqref{conconvexBuniformp}$--$\eqref{ualphabeta}$, we have
\begin{equation}\label{ualphabeta1}
0\leq \big(J_N(\varepsilon,r_k\varepsilon)\big)^2\lesssim J_N(\varepsilon,r_k\varepsilon)U_N(\varepsilon,r_k\varepsilon) \leq
J_N(\varepsilon,r_k\varepsilon) \langle \tilde{\eta}(u) \rangle_{\varepsilon,r_k\varepsilon}\lesssim \tilde{\varrho}(\varepsilon,r_k),
\end{equation}
which, by $\eqref{mu0k}$ and $\eqref{calBpg}$, implies that $\eqref{conJpN}$ holds.

For the case that $\eqref{quadratic2}$ holds for some $\gamma>1$, but does not hold for $\gamma=1$. We claim:
\begin{equation}\label{quadratic2tilde}
\frac{\tilde{Q}(u)}{\tilde{\eta}(u)} \lesssim \Big(\frac{\tilde{Q}(u)}{\tilde{f}(u)}\Big)^{\gamma} \qquad {\rm as\ } u\rightarrow \pm\infty,
\end{equation}
where functions $\tilde{Q}(u)$ and $\tilde{q}(u)$ are defined by
\begin{equation}\label{Qqtilde}
\tilde{Q}(u):=(u-\langle u\rangle_{\varepsilon,\delta})(\tilde{q}(u)-\tilde{q}(\langle u\rangle_{\varepsilon,\delta}))-\tilde{f}(u)\tilde{\eta}(u),\qquad \tilde{q}(u):=\int_0^u\tilde{\eta}'(\xi)\tilde{f}'(\xi)\, {\rm d}\xi.
\end{equation}
By the definition of $\tilde{f}(u)$ and $\tilde{\eta}(u)$ in $\eqref{conbarfetalp}$,  as $u \to \pm\infty$,
\begin{equation}\label{qtilde}
\tilde{q}(u)=\int_0^u\big(\eta'(\xi)-\eta'(\langle u\rangle_{\varepsilon,\delta}{+0})\big)\big(f'(\xi)-f'(\langle u\rangle_{\varepsilon,\delta}{+0})\big)\, {\rm d}\xi =O(1) q(u).
\end{equation}
By $\eqref{grdfetaq}$ and $\eqref{Qqtilde}$--$\eqref{qtilde}$, as $u \to \pm\infty$,
\begin{align}\label{Qtilde}
\tilde{Q}(u)&=(u-\langle u\rangle_{\varepsilon,\delta})(\tilde{q}(u)-\tilde{q}(\langle u\rangle_{\varepsilon,\delta}))-\tilde{f}(u)\tilde{\eta}(u)\nonumber\\[1mm]
 &=(u-\langle u\rangle_{\varepsilon,\delta})(q(u)-q(\langle u\rangle_{\varepsilon,\delta}))-(f(u)-f(\langle u\rangle_{\varepsilon,\delta}))(\eta(u)-\eta(\langle u\rangle_{\varepsilon,\delta}))\nonumber\\[1mm]
&=uq(u)-f(u)\eta(u)+O(1)q(u)\nonumber\\[1mm]
&=h(u)|q(u)|+O(1)q(u) \nonumber\\[1mm]
&=(h(u)+O(1))|q(u)|\simeq Q(u).
\end{align}
Combining \eqref{fetatilde0}--\eqref{fetatilde} with $\eqref{Qtilde}$,  $\eqref{quadratic2}$ implies that $\eqref{quadratic2tilde}$ holds.

Since $u\in L^p_{\rm loc}$ such that $Q(u)\in L^1_{\rm loc}$, and $\Omega_{\varepsilon,\delta}\subset B_{\frac{\delta}{\varepsilon}}(0,0)$
is uniformly bounded in $(\varepsilon, \delta)$ with $\delta\leq\varepsilon$, from $\eqref{fetatilde0}$, $\eqref{quadratic2tilde}$, and $\eqref{Qtilde}$,
by the Sobolev inequality, we have
\begin{align}\label{ualphabetaaa}
J_N(\varepsilon,\delta)&=\frac{1}{|\Omega_{\varepsilon,\delta}|}\int_{\Omega^N_{\varepsilon,\delta}}\tilde{f}(u)\,{\rm d}t{\rm d}x\nonumber\\[1mm]
&=\frac{1}{|\Omega_{\varepsilon,\delta}|}\int_{\Omega^N_{\varepsilon,\delta}}\tilde{\eta}(u)^{\frac{1}{\gamma}}\, \big(\tilde{f}(u)\tilde{\eta}(u)^{-\frac{1}{\gamma}}\big)\,{\rm d}t{\rm d}x\nonumber\\[1mm]
&\leq \Big(\frac{1}{|\Omega_{\varepsilon,\delta}|}\int_{\Omega^N_{\varepsilon,\delta}}\tilde{\eta}(u)\,{\rm d}t {\rm d}x\Big)^{\frac{1}{\gamma}}\, \Big(\frac{1}{|\Omega_{\varepsilon,\delta}|}\int_{\Omega^N_{\varepsilon,\delta}}\big(\tilde{f}(u)\tilde{\eta}(u)^{-\frac{1}{\gamma}}\big)^{\frac{\gamma}{\gamma-1}}\,{\rm d}t{\rm d}x\Big)^{\frac{\gamma-1}{\gamma}}\nonumber\\[1mm]
&\leq \big(U_N(\varepsilon,\delta)\big)^{\frac{1}{\gamma}}\, \Big(\big\langle\big(\tilde{f}(u)\tilde{\eta}(u)^{-\frac{1}{\gamma}}\big)^{\frac{\gamma}{\gamma-1}}
\big\rangle_{\varepsilon,\delta}\Big)^{\frac{\gamma-1}{\gamma}}\nonumber\\[1mm]
&\lesssim \big(U_N(\varepsilon,\delta)\big)^{\frac{1}{\gamma}}\, \big(\langle \tilde{Q}(u)
\rangle_{\varepsilon,\delta}\big)^{\frac{\gamma-1}{\gamma}}\nonumber\\[1mm]
&\lesssim \big(U_N(\varepsilon,\delta)\big)^{\frac{1}{\gamma}}\, \big(\langle Q(u)
\rangle_{\varepsilon,\delta}\big)^{\frac{\gamma-1}{\gamma}}.
\end{align}
Noticing that $\tilde{\eta}(u)\geq 0$ on ${\Bbb R}$, it follows from $\eqref{growthrateetaQ}$  and $\eqref{conconvexBuniformp}$ that
\begin{align}\label{ualphabeta2}
0\leq \big(J_N(\varepsilon,r_k\varepsilon)\big)^{\gamma+1}
 &\lesssim J_N(\varepsilon,r_k\varepsilon)\,U_N(\varepsilon,r_k\varepsilon)\,\big(\langle Q(u) \rangle_{\varepsilon,\delta}\big)^{\gamma-1}\nonumber\\[1mm]
 & \lesssim
J_N(\varepsilon,r_k\varepsilon)\, \langle \tilde{\eta}(u) \rangle_{\varepsilon,r_k\varepsilon}\,(1+\rho_k\,\varepsilon^{-1})^{\gamma-1}\nonumber\\[1mm]
&\lesssim \tilde{\varrho}(\varepsilon,r_k)\,(1+\rho_k\,\varepsilon^{-1})^{\gamma-1}.
\end{align}
which, by $\eqref{mu0k}$ and $\eqref{calBpg}$, implies that $\eqref{conJpN}$ holds.

Therefore, from $\eqref{conbarfetaap}$--$\eqref{conomegap}$,
$\eqref{conJp0}$--$\eqref{conJpN}$ directly imply $\eqref{calff0}$.

\smallskip
3. From $\eqref{OmegaB}$, $\eqref{xifu}$, and $\eqref{mu0k}$, we conclude
\begin{align}\label{calxip}
\big|\big(\varphi_t+f(\varphi_x)\big)(0,0)\big|&\lesssim\big|\langle \varphi_t\rangle_{\varepsilon,r_k\varepsilon}+ f(\langle\varphi_x\rangle_{\varepsilon,r_k\varepsilon})\big|+r_k\nonumber\\[1mm]
&\lesssim\big|\langle (\varphi_{\varepsilon})_t\rangle_{\varepsilon,r_k\varepsilon}
+ f(\langle(\varphi_{\varepsilon})_x\rangle_{\varepsilon,r_k\varepsilon})\big|+\varepsilon+r_k\nonumber\\[1mm]
&=\big|-\langle f(u)\rangle_{\varepsilon,r_k\varepsilon}+ f(\langle u\rangle_{\varepsilon,r_k\varepsilon})\big|+\varepsilon+r_k\nonumber\\[1mm]
&=\big|\langle f(u)-f(\langle u\rangle_{\varepsilon,r_k\varepsilon})\rangle_{\varepsilon,r_k\varepsilon}\big|+\varepsilon+r_k.
\end{align}
Using $\eqref{calff0}$,
and letting first $r_k$ and then $\varepsilon$ go to $0$ in $\eqref{calxip}$, we obtain
$$
\big(\varphi_t+f(\varphi_x)\big)(0,0)=0,
$$
as desired, which means that $w$ is also a viscosity supersolution.

This completes the proof of Theorem $\ref{Thep}$.

\smallskip
\begin{remark}\label{QF}
From the argument in {\rm \S 5.2},
we know that condition $\eqref{quadratic1}$ is used to formulate $\eqref{growthrateetaQ}$ and hence $\eqref{growthrateetaq}$ so that the essential inequality $\eqref{caletafg}$ holds{\rm ;} otherwise, the inequality in $\eqref{calup}$ is trivial so that we can not obtain $\eqref{growthrateetaQ}$, $etc.$.

Condition $\eqref{quadratic2}$ is used to formulate $\eqref{ualphabetaaa}$ so that, by using $\eqref{growthrateetaQ}$, $\eqref{conconvexBuniformp}$
can imply $\eqref{ualphabeta2}$. To formulate $\eqref{ualphabeta2}$ from $\eqref{conconvexBuniformp}$, we need a positive lower bound of $\langle \tilde{\eta}(u) \rangle_{\varepsilon,r_k\varepsilon}$. Since $\langle \tilde{\eta}(u) \rangle_{\varepsilon,r_k\varepsilon}$ can also tend to zero,
we could not expect that there exists a sufficiently small positive constant to be the lower bound of $\langle \tilde{\eta}(u) \rangle_{\varepsilon,r_k\varepsilon}$.
Therefore, we need to require $\langle \tilde{\eta}(u) \rangle_{\varepsilon,r_k\varepsilon}$ bounded by $\langle \tilde{f}(u) \rangle_{\varepsilon,r_k\varepsilon}$
from below in some sense, which is exactly what $\eqref{quadratic2}$ means.
\end{remark}

%    Bibliographies can be prepared with BibTeX using amsplain,
%    amsalpha, or (for "historical" overviews) natbib style.
\bibliographystyle{amsplain}

\begin{thebibliography}{99}

\bibitem{[ALR]}
L. Ambrosio, M. Lecumberry, and T. Rivi\`{e}re, {\it A viscosity property of minimizing micromagnetic configurations}, Comm. Pure Appl. Math. {\bf 56} (2003), 681--688.

\bibitem{[KSN2]}
V.~ I. Arnol'd, M. ~I. Vishik, Yu. ~S. Ilyashenko, A. ~S. Kalashnikov, V. ~A. Kondrat'ev, S. ~N. Kruzhkov, E. ~M. Landis, V. ~M. Millionshchikov, O. ~A. Oleinik, A. ~F. Filippov,
and M. ~A. Shubin,
%S.~N. Kruzkov et al.,
{\it Unsolved problems in the theory of differential equations and mathematical physics},
Uspekhi Mat. Nauk, {\bf 44} (1989), no. 4, 191--202 (in Russian); Russian Math. Surveys, {\bf 44} (1989), no. 4, 157--171 (in English).


\bibitem{[BP]}
F. Bouchut and B. Perthame, {\it Kru\v{z}kov's estimates for scalar conservation laws revisited}, Trans. Amer. Math. Soc. {\bf 350} (1998), 2847--2870.


\bibitem{[CCY]}
G.-W. Cao, G.-Q. Chen, and X.-Z. Yang, {\it New formula for entropy solutions for scalar hyperbolic conservation laws{\rm :} nonuniform convexity of flux functions
and fine properties of solutions}, Preprint 2023.

\bibitem{[CLu]}
G.-Q. Chen and Y.-G. Lu, {\it A study of approaches to applying the theory of compensated compactness}, Chinese Science Bulletin, {\bf 9} (1988), 641--644 (in Chinese); {\bf 34} (1989), 15--19 (in English).

\bibitem{[CS]}
E. Conway and J. Smoller, {\it Uniqueness and stability theorem for the generalized solution of the initial-value problem for a class of quasi-linear equations in several space variables}, Arch. Ration. Mech. Anal. {\bf 23(5)} (1967), 399--408.

\bibitem{[CL]}
M.~G. Crandall and P.~L. Lions, {\it Viscosity solutions of Hamilton-Jacobi equations}, Trans. Amer. Math. Soc. {\bf 277} (1983), 1--42.

\bibitem{[CEL]}
M.~G. Crandall, L.~C. Evans and P.~L. Lions, {\it Some properties of viscosity solutions of Hamilton-Jacobi equations}, Trans. Amer. Math. Soc. {\bf 282(2)} (1984), 487--502.

\bibitem{[DCM]}
C.~M. Dafermos, {\it  Hyperbolic Conservation Laws in Continuum Physics}, 4th Edition, Spriner-Verlag: Berlin, 2010.

\bibitem{[DCM1]}
C.~M. Dafermos,  {\it Regularity and large time behaviour of solutions of a conservation law without convexity}, Proc. Royal Soc. Edinburgh, {\bf 99A} (1985), 201--239.

\bibitem{[HL]}
L. H\"{o}rmander,  {\it Notions of Convexity}, Birkh\"{a}user Boston, MA, 1994.

\bibitem{[HI]}
H. Ishii,  {\it Uniqueness of unbounded viscosity solution of Hamilton-Jacobi equations}, Indiana Univ. Math. J. {\bf 33(5)} (1984), 721--748.

\bibitem{[KV]}
S.~G. Krupa and A.~F. Vasseur,  {\it On uniqueness of solutions to conservation laws verifying a single entropy condition}, Journal of Hyperbolic Differential Equations, {\bf 16(01)} (2019), 157--191.

\bibitem{[KSN1]}
S.~N. Kruzkov, {\it First order quasilinear equations in several independent variables}, Mat. Sb. (N.S.) {\bf 81(123)} (1970), 228--255 (Russian);
Math. USSR Sb. {\bf 10(2)} (1970), 217--243 (English).


\bibitem{[LOW]}
C. De Lellis, F. Otto, and M. Westdickenberg, {\it Minimal entropy conditions for Burgers equation}, Quart. Appl. Math. {\bf 62(4)} (2004), 687--700.

\bibitem{[Lax]}
P.~D. Lax,  {\it Hyperbolic Systems of Conservation Laws and the Mathematical Theory of Shock Waves}, CBMS-RCSAM, No. 11,  Society for Industrial and Applied Mathematics, Philadelphia, Pa., 1973.

\bibitem{[Lions]}
P.~L. Lions, {\it Generalized Solutions of Hamilton-Jacobi Equations}, Research Notes in Mathematics, {\bf 69}, Pitman Advanced Publishing Program: London, 1982.

\bibitem{OOA}
O.~A. Oleinik, {\it Discontinuous solutions of non-linear differential equations}, Uspekhi. Mat. Nauk, {\bf 12(3)} (1957), 3--73.

\bibitem{PEY}
E.~Y. Panov, {\it Uniqueness of the solution of the Cauchy problem for a first order quasilinear equation with one admissible strictly convex entropy}, (Russian) Mat. Zametki, {\bf 55} (1994), no. 5, 116--129, 159 (in Russian); Math. Notes, {\bf55} (1994), no. 5, 517--525 (in English).

\bibitem{Tartar}
L. Tartar, {\it Compensated compactness and applications to partial differential equations, In:  Research Notes in Mathematics, Nonlinear Analysis and Mechanics}, Herriot-Watt Symposium, Vol. {\bf 4}, R. J. Knops, ed., Pitman Press, 1979.

\bibitem{VA}
A. Vol$'$pert, {\it The space BV and quasilinear equations}, Maths. USSR Sb. {\bf 2} (1967), 225--267.
\end{thebibliography}
%    Insert the bibliography data here.
%\bigskip

\end{document}